\setlist[itemize]{leftmargin=0.2pt}
\newlist{condenum}{enumerate}{1} 
\setlist[condenum]{label=\bfseries Condition C\arabic*., 
                   ref=\arabic*, wide}
\newtheorem{theorem}{Theorem}
\newtheorem{lemma}[theorem]{Lemma}
\newtheorem{remark}{Remark}
\newcounter{numexample}[section]
\newenvironment{numexample}[1][]{\refstepcounter{numexample}\par\medskip
   \noindent \textbf{Numerical Example~\thenumexample. #1} \rmfamily}{}
\newcommand{\RNum}[1]{\uppercase\expandafter{\romannumeral #1\relax}}
\title{A Note on the Likelihood Ratio Test in\\ High-Dimensional Exploratory Factor Analysis\footnotetext{This research is partially supported by NSF CAREER SES-1846747, DMS-1712717, and SES-1659328.}}
\author{Yinqiu He, Zi Wang, and Gongjun Xu}
\affil[]{\small{Department of Statistics, University of Michigan}}
\date{}
\providecommand{\keywords}[1]
{
  \small	
  \textit{Keywords:} #1
}
\begin{document}

\maketitle

 \doublespacing


\begin{abstract}
The likelihood ratio test is widely used in  exploratory factor analysis to assess the model fit and determine the number of latent factors. Despite its popularity and clear statistical rationale, researchers have found that when the dimension of the response data is large compared to the sample size, the classical chi-square approximation of the  likelihood ratio test statistic often fails.  Theoretically, it has been an open problem when such a phenomenon happens as the dimension of data increases;
practically, the effect of high dimensionality is less examined in exploratory factor analysis, and there lacks a clear statistical guideline on the validity of the conventional chi-square approximation. 
To address this problem,  we investigate  the failure of the chi-square approximation of the likelihood ratio test in high-dimensional exploratory factor analysis, and derive the {\it necessary and sufficient} condition  to ensure the validity of the chi-square approximation. The results yield simple quantitative guidelines to check in practice and would also provide useful statistical insights into the practice of exploratory factor analysis.

\end{abstract}

\keywords{Exploratory factor analysis, likelihood ratio test, chi-square approximation.}

\newpage 
\section{Introduction}

Exploratory factor analysis serves as a popular statistical tool to gain insights into latent structures underlying the observed data \citep{Gorsuch1988,fabrigar2011exploratory,bartholomew2011latent}. It is widely used in many application areas such as psychological and social sciences  \citep{Fabrigar1999,preacher2002exploratory,thompson2004exploratory,Finch2016}. In factor analysis, the relationship  among  observed variables in data are explained by a smaller number of unobserved underlying variables, called common factors. To understand the underlying scientific patterns, one fundamental problem in factor analysis is to decide the minimum number of latent common factors that is needed to describe the statistical dependencies in data.

In order to determine the number of factors in exploratory factor analysis, a wide variety of procedures have been proposed; see reviews and discussions in \cite{costello2005best}, \cite{barendse2015using} and \cite{luoExploratory2019}. 
For instance, one broad class of criteria are based on the eigenvalues of the sample correlation matrix of the observed data. Examples include the  Kaiser criterion \citep{kaiser1960application}, the scree test \citep{Cattell1966}, the parallel analysis method \citep{Horn1965,keeling2000regression,dobriban2017permutation}, and  testing linear trend of eigenvalues \citep{bentler1998tests} among many others. 
Another class of methods propose various goodness-of-fit indexes to  select the number of factors,  such as AIC \citep{akaike87factor}, BIC \citep{schwarz1978estimating}, the reliability coefficient \citep{Tucker1973}, and the root mean square error of approximation \citep{steiger2016notes}. Moreover, 
the likelihood ratio test  provides another popularly used approach in practice  \citep{bartlett1950tests,anderson1958introduction}.

Among the various criteria to determine the number of factors, the likelihood ratio test  plays a  unique role, as it is based on a formal  hypothesis testing procedure with a clear statistical rationale and also has a solid theoretical foundation with guaranteed  statistical properties.
In particular, the likelihood ratio test examines how a factor analysis model fits the data using a hypothesis testing framework based on the likelihood theory. 
The classical statistical theory shows that under the null hypothesis,  the likelihood ratio test statistic (after proper scaling)  asymptotically converges to a chi-square distribution, with the degrees of freedom equal to the difference in the number of free parameters between the null and alternative hypothesis  models  
  \cite[see, e.g.,][Section 14.3.2]{anderson1958introduction}. 

In the modern big data era, it is of emerging interest to analyze high-dimensional data \citep{Finch2016,harlow2016big,chen2019joint},  
where throughout this paper we refer to the dimension of the observed response variables as the dimension of data. 
Classical asymptotic theory, despite its importance, often  replies on the assumption that the data dimension is fixed as the sample size increases. Such an assumption often fails in high-dimensional data analysis with large data dimension, and therefore the corresponding asymptotic theory is no longer  directly applicable to modern  high-dimensional applications. In fact, it has been found in the recent statistical literature  that the chi-square approximations for the likelihood ratio test statistics   can become inaccurate as the dimension of data increases with the sample size  \citep[e.g.][]{bai2009,jiang2013,He2018}. 
In factor analysis, although considerable high-dimensional statistical analysis results have been recently developed  \citep{bai2002determining,bai2012statistical,sundberg2016exploratory,ait2017using,chen2020determining}, 
less attention has been paid to the statistical properties of the popular likelihood ratio test under high dimensions. 
Particularly, it remains an open problem  when the conventional chi-square approximation of  the likelihood ratio test starts to fail  as the data dimension grows. In other words, for a dataset with sample size $N$, how large the data dimension $p$ can be to still ensure the validity of the chi-square approximation of the likelihood ratio test?

To better understand this issue, 
this paper investigates the influence of the data dimensionality on the likelihood ratio test in high-dimensional exploratory factor analysis. 
Specifically,  under the null hypothesis, we derive the  necessary and sufficient condition for the chi-square approximation  to hold.
The results consider both the likelihood ratio test without and with the Bartlett correction, and   provide useful  quantitative guidelines that are easy to check in practice. 
Our simulation results are consistent with the theoretical conclusions, suggesting good finite-sample performance of the developed theory. 

The rest of the paper is organized as follows. 
In Section \ref{sec2.1}, we give a brief review of the exploratory factor analysis and the likelihood ratio test, and in Section \ref{sec2.2}, we present our theoretical and numerical results on the performance of the chi-square approximation under high dimensions. Several extensions are discussed in Section 3, and the technical proofs and additional simulation studies are deferred to the appendix.

\section{Likelihood Ratio Test under High Dimensions} \label{sec2}


\subsection{Likelihood ratio test for exploratory factor analysis} \label{sec2.1}
In this section, we briefly review   the likelihood ratio test in exploratory factor analysis  \citep[see, e.g.,][Section 14]{anderson1958introduction}. 
Suppose $X_i$, $i=1,\ldots, N$ are independent and identically distributed $p$-dimensional random vectors. The exploratory factor analysis considers the following common-factor model
\begin{eqnarray}\label{eq00}
X_i = \mu + \Lambda F_i + U_i,
\end{eqnarray} where $\mu$ a the $p$-dimensional mean parameter vector, $\Lambda$ is a $p \times k_0$ loading matrix with $\mathrm{rank}(\Lambda)=k_0<p$, $F_i$ is a $k_0$-dimensional random vector containing the common factors, and $U_i$ is a  $p$-dimensional error vector.   
It is well known that the factor model \eqref{eq00} is not
identifiable without additional constraints, and there are many ways to impose identifiablity  restrictions \citep{anderson1958introduction,bai2012statistical}. In this paper, we focus on the following  identification conditions which have been popularly used in exploratory factor analysis.
In particular, we assume that $F_i$ and $U_i$ are independent latent random vectors with $\mathrm{E}(F_i)=\mathbf{0}_{k_0}$, $\mathrm{cov}(F_i)=\mathrm{I}_{k_0}$, $\mathrm{E}(U_i)=\mathbf{0}_p$, and $\mathrm{cov}(U_i)=\Psi$, where $\mathbf{0}_{k_0}$ denotes a $k_0$-dimensional all-zero vector, $\mathrm{I}_{k_0}$ represents a $k_0\times k_0$ identity matrix, and $\Psi$ is a $p\times p$ diagonal matrix with $\mathrm{rank}(\Psi)=p$.
It follows that the population covariance matrix $\Sigma=\mathrm{cov}(X_i)$ can be expressed as
\begin{eqnarray}
\Sigma = \Lambda\Lambda^\top + \Psi. \label{eq01}
\end{eqnarray}

Typically, the true number of common factors $k_0$ is  unknown. In exploratory factor analysis, to determine the number of factors  in model \eqref{eq00},  various  procedures have been developed.  Among them, the likelihood ratio test plays a  unique role due to its solid theoretical foundation and nice statistical properties. 
The common practice utilizes the model's likelihood function  assuming both  $F_i$ and $U_i$  to be normally distributed. 
In such case, $X_i$ follows a multivariate normal distribution with mean vector $\mathbf{0}_p$ and covariance matrix $\Sigma$ as in \eqref{eq01}, and we write  $X_i\sim  \mathcal{N}(\mathbf{0}_p, \Sigma)$.
 Then, the likelihood ratio test is used to sequentially  test the factor analysis model with a specified number of factors against the saturated model \citep[e.g.,][]{Hayashi2007}. Specifically, for each  $k = 0, 1, \ldots, p$, we consider the following null and alternative hypotheses:
$$\mbox{$H_{0,k}:$  $\Sigma = \Lambda\Lambda^\top + \Psi$ 
    with (at most)  $k$ factors, \, versus~ $H_{A,k}:$ $\Sigma$ is any positive definite matrix.}$$
In practice without a priori knowledge, a typical procedure examines the above hypotheses in a forward stepwise manner. Specifically, we first consider $k=0$ and examine $H_{0,0}: k_0=0$ versus $H_{A,0}$ using the likelihood ratio test, that is, testing whether there is any factor in model \eqref{eq00}.
If $H_{0,0}$ is rejected, we then consider $k=1$,  that is,   a 1-factor model in   the null hypothesis $H_{0,1}$. If  $H_{0,1}$ is rejected, we proceed with $k=2$, and test a 2-factor model for $H_{0,2}$. 
This testing procedure continues until we fail to reject $H_{0,\hat{k}}$ for some $\hat{k}$. Then $\hat{k}$ is taken as an estimate of the true number of factors based on the likelihood ratio test.   



We next introduce the details on the abovementioned  likelihood ratio test. 
For $k=0,$ $H_{0,0}$ examines the existence of any significant factors, which is an important problem in psychology applications \citep[e.g.,][]{Mukherjee1970}. 
This test can be written as 
$$H_{0} : \Sigma = \Psi  ~ \mbox{  versus  } ~ \ H_{A} : \Sigma \neq \Psi,$$
that is, testing whether $ \Sigma$ is a diagonal matrix.
Statistically,  this is also equivalent to the following hypothesis test  
$$H_{0} : R = \mathrm{I}_{p} \ ~ \mbox{  versus  } ~\ H_{A} : R \neq \mathrm{I}_{p},$$ 
where $R$ denotes the population correlation matrix of the response variables $\{X_i, i=1,\ldots, N\}$. 
Under the normality assumption of $X$, $H_{0,0}$ then tests  for the complete independence between $p$ dimensions of  $X$. 
The likelihood ratio test statistic for $H_{0,0}$ with the chi-square limit is
$T_0=-(N-1)\log( |\hat{R}_N|)$,  
where 
$\hat{R}_N$ denotes the sample correlation matrix of the observations $\{X_i, i=1,\ldots, N\}$, and $|\hat{R}_N|$ denotes the  determinant of $\hat{R}_N$; see, e.g., \cite{bartlett1950tests}.  
When the dimension $p$ is fixed and the sample size $N\to \infty$, under $H_{0,0}$, 
\begin{equation}\label{eq1}
	T_0 \xrightarrow{D} \chi^2_{f_0}, \quad \text{with}\  f_0 = p(p-1)/2,
\end{equation} where $\xrightarrow{D}$ represents the convergence in distribution, and $\chi^2_{f_0}$ represents a random variable following the chi-square distribution with degrees of freedom $f_0$. To improve the finite-sample performance, researchers have proposed using the Bartlett correction for the likelihood ratio test   \citep{bartlett1950tests}. The corrected test statistic is 
 $\rho_0  T_{0}$ with the Bartlett correction term $\rho_0 = 1- ({2p+5})/\{6(N-1)\}$, and
under $H_{0,0}$ with  fixed $p$ and $N\to \infty$, we still have the chi-square approximation: 
\begin{eqnarray}
	\rho_0 \times T_{0} \xrightarrow{D} \chi^2_{f_0} , \label{eq2}
\end{eqnarray} while it
improves the convergence rate of the chi-square approximation (3) from $O(N^{-1})$ to $O(N^{-2})$.

For $k \geq 1$, $H_{0,k}$ examines whether the $k$-factor model fits the observed data. 
Under the $k$-factor model,  
let $\hat{\Lambda}_k$ and $\hat{\Psi}_k$ denote the maximum likelihood estimators of $\Lambda$ and $\Psi$, respectively, and define $\hat{\Sigma}_k = \hat{\Lambda}_k \hat{\Lambda}_k^{\top} + \hat{\Psi}_k$. 
Then to test $H_{0,k}$,  
the  likelihood ratio test statistic can be written as
\begin{align}
T_{k} = -(N-1)\log( |\hat{\Sigma}|\times |\hat{\Sigma}_k|^{-1} )+ (N-1) \{\mathrm{tr}(\hat{\Sigma} \hat{\Sigma}_k^{-1})-p \},\label{eq:tkstat}	
\end{align}
where $\hat{\Sigma}$ is the unbiased sample covariance matrix of the observations $\{X_i, i=1,\cdots, N\}$, and $\mathrm{tr}(A)$ denotes the trace of a matrix $A$; see, e.g., \cite{lawley1962factor}. 
Under  the null hypothesis  with  $k_0=k$,   $p$ fixed and $N\to \infty$,  we have the following chi-square approximation:  
\begin{eqnarray}\label{eq3}
	T_k \xrightarrow{D}  \chi^2_{f_k},\quad \text{where }\ f_k= \{ (p-k)^2 - p -k\}/2. 
\end{eqnarray} 
Moreover,  applying the Bartlett correction for this test, we have
\begin{eqnarray}\label{eq4}
	\rho_k \times T_k \xrightarrow{D} \chi_{f_k}^2, \quad \text{where}\ \rho_k = 1-\frac{2p+5+4k}{6(N-1)}. 
\end{eqnarray}



Despite the usefulness of the above chi-square approximations, classical large sample theory assumes that the data dimension $p$ is fixed, and therefore many  conclusions are not directly applicable to high-dimensional data when  $p$ increases with the sample size $N$. 
As analyzing high-dimensional data is of emerging interest in modern data science, it imposes new challenges to   understanding the statistical performance of the likelihood ratio test in the exploratory factor analysis, which will be investigated in the next section. 

\subsection{Main results}  \label{sec2.2}

In high-dimensional exploratory factor analysis,  it is important to understand the limiting behavior of the likelihood ratio test, as applying an inaccurate limiting distribution would lead to misleading scientific conclusions.
This section focuses on the limiting distribution of the likelihood ratio test under the null hypothesis, and investigates the influence of the data dimension $p$ and the sample size $N$ on the chi-square approximation.

Recent statistical literature has shown that the chi-square approximation for the likelihood ratio test  can  become inaccurate in various testing problems \citep{bai2009,jiang2013,He2018}, while this inaccuracy  issue   is still less studied in the exploratory factor analysis.  
To demonstrate that similar phenomena exist for the exploratory factor analysis, we first present a numerical example, before showing our theoretical  results. 

\begin{numexample}\label{eg:num1}
\textit{Consider $H_{0,0}$ in Section \ref{sec2.1}  with $N=1000$ and $p\in \{20, 100, 300, 500\}$. Under each combination of $(N,p)$, we  generate $X_i, i=1,\ldots, N$ from $\mathcal{N}(\mathbf{0}_p,\mathrm{I}_p)$ independently, and then compute the likelihood ratio test statistics $T_0$  in \eqref{eq1} 
and its Bartlett corrected version  $\rho_0 T_0$ in \eqref{eq2}. We repeat the procedure 5000 times, and present the histograms of $T_0$ and $\rho_0T_0$ in the first and second rows, respectively,  of  Figure \ref{fig:qqplot1}. For comparison, in each histogram, we add  the theoretical density curve of the limiting distribution $\chi^2_{f_0}$ in \eqref{eq1} and \eqref{eq2} (the red curves in Figure \ref{fig:qqplot1}).}
\end{numexample}

\begin{figure}[!ht]
\centering

\begin{subfigure}{\textwidth}
\centering
\includegraphics[width=0.235\textwidth]{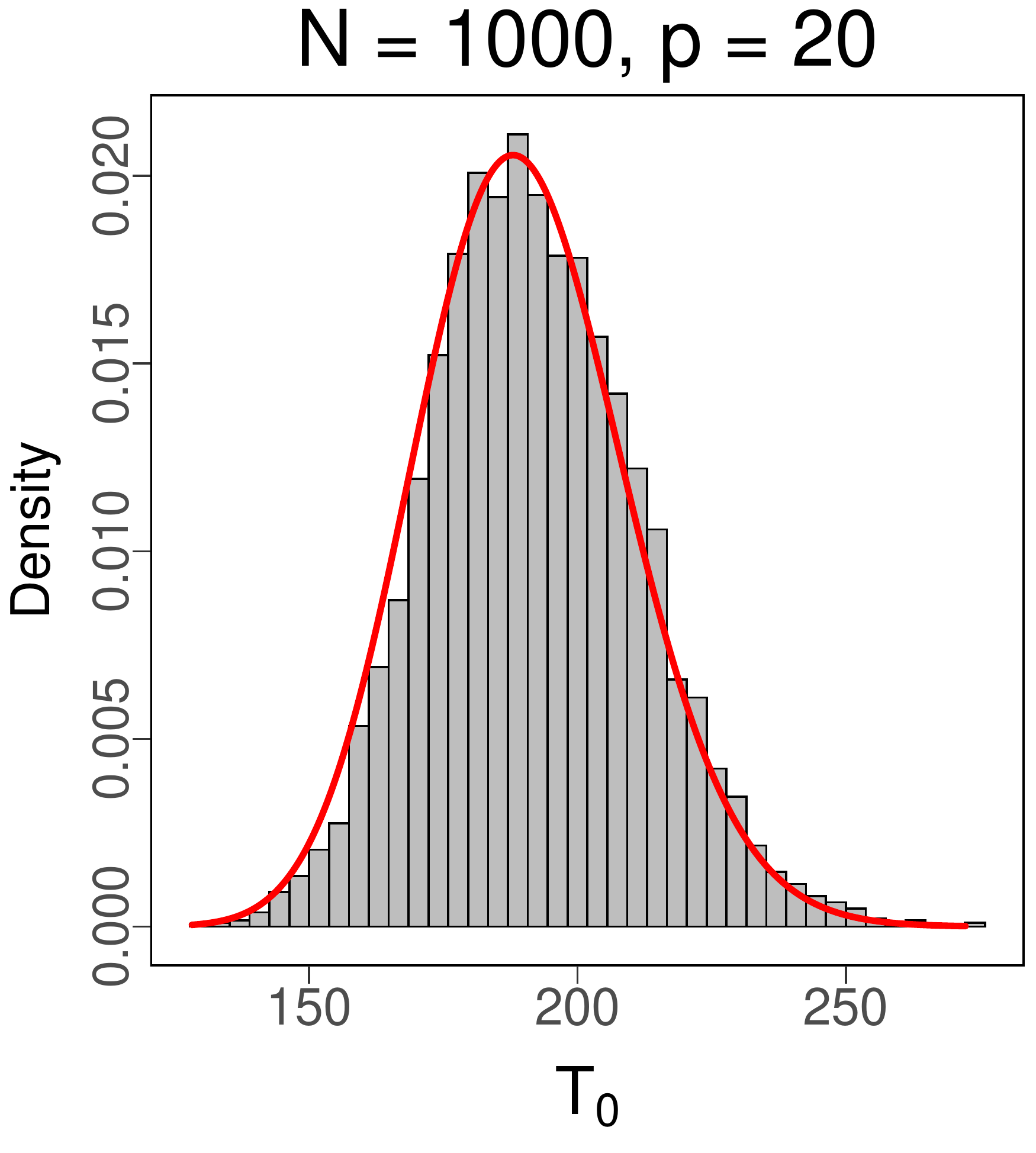}\ \ 
\includegraphics[width=0.235\textwidth]{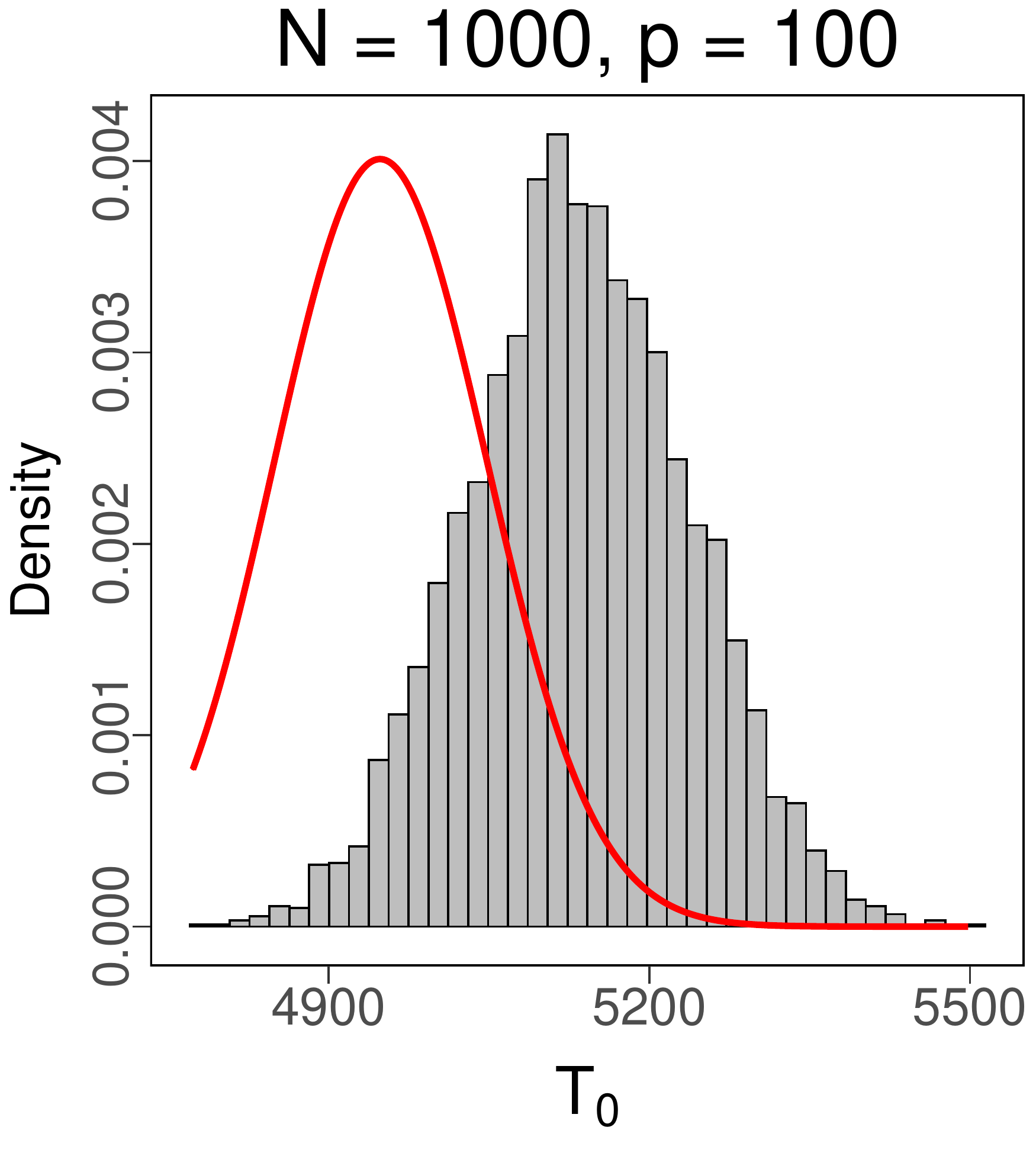}\ \ 
\includegraphics[width=0.235\textwidth]{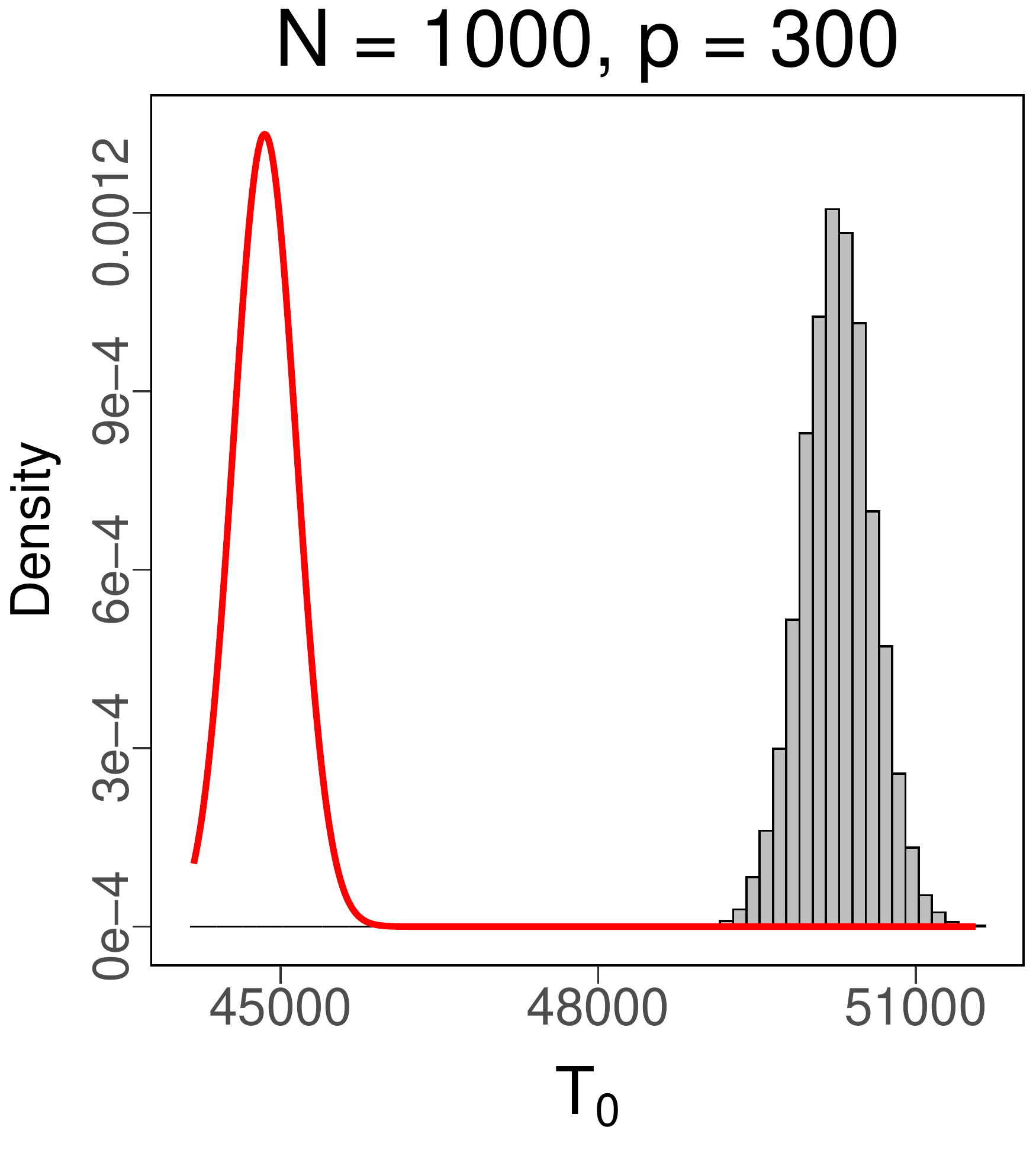}\ \ 
\includegraphics[width=0.235\textwidth]{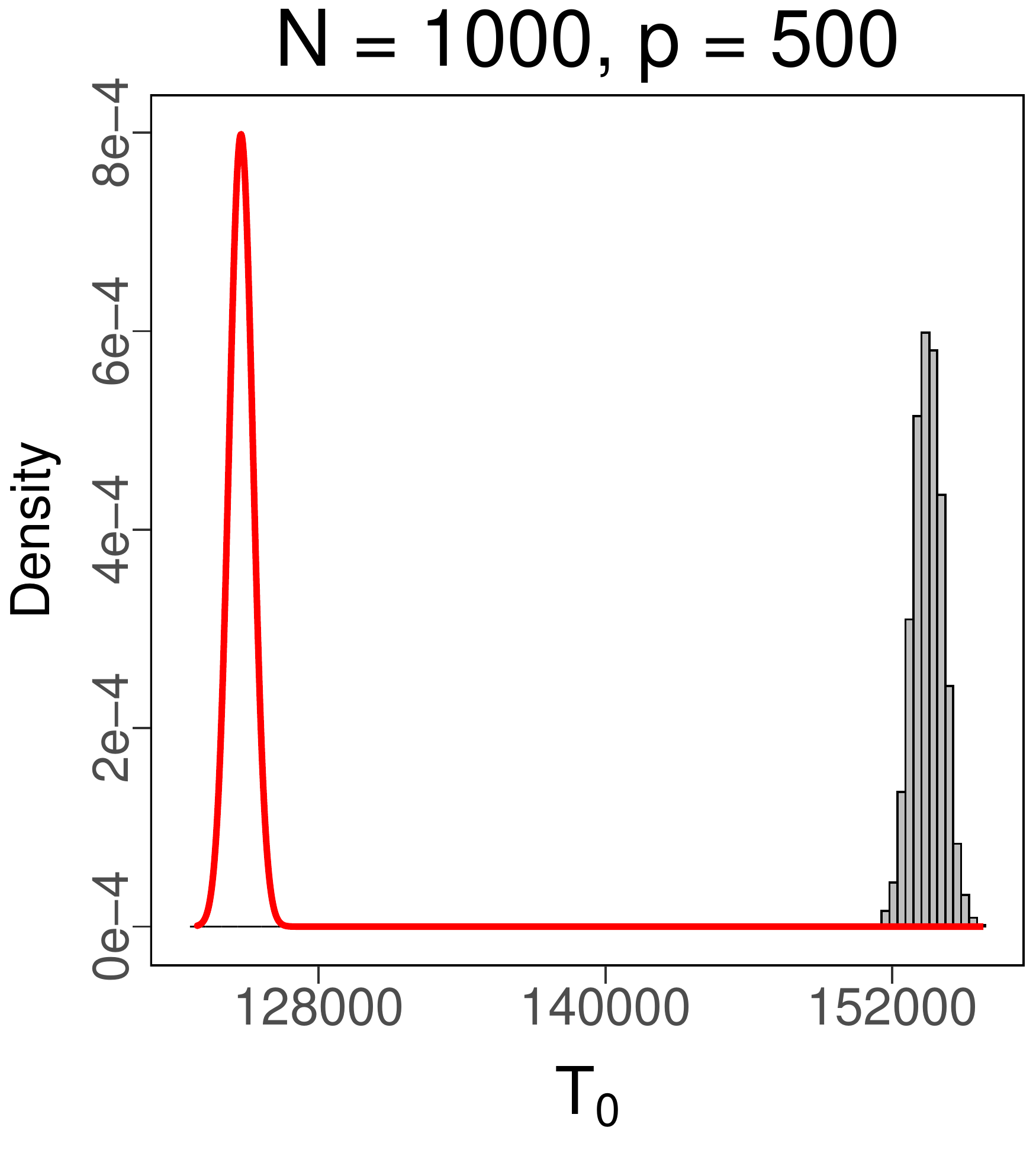}
\end{subfigure}\\[4pt]

\begin{subfigure}{\textwidth}
\centering
\includegraphics[width=0.235\textwidth]{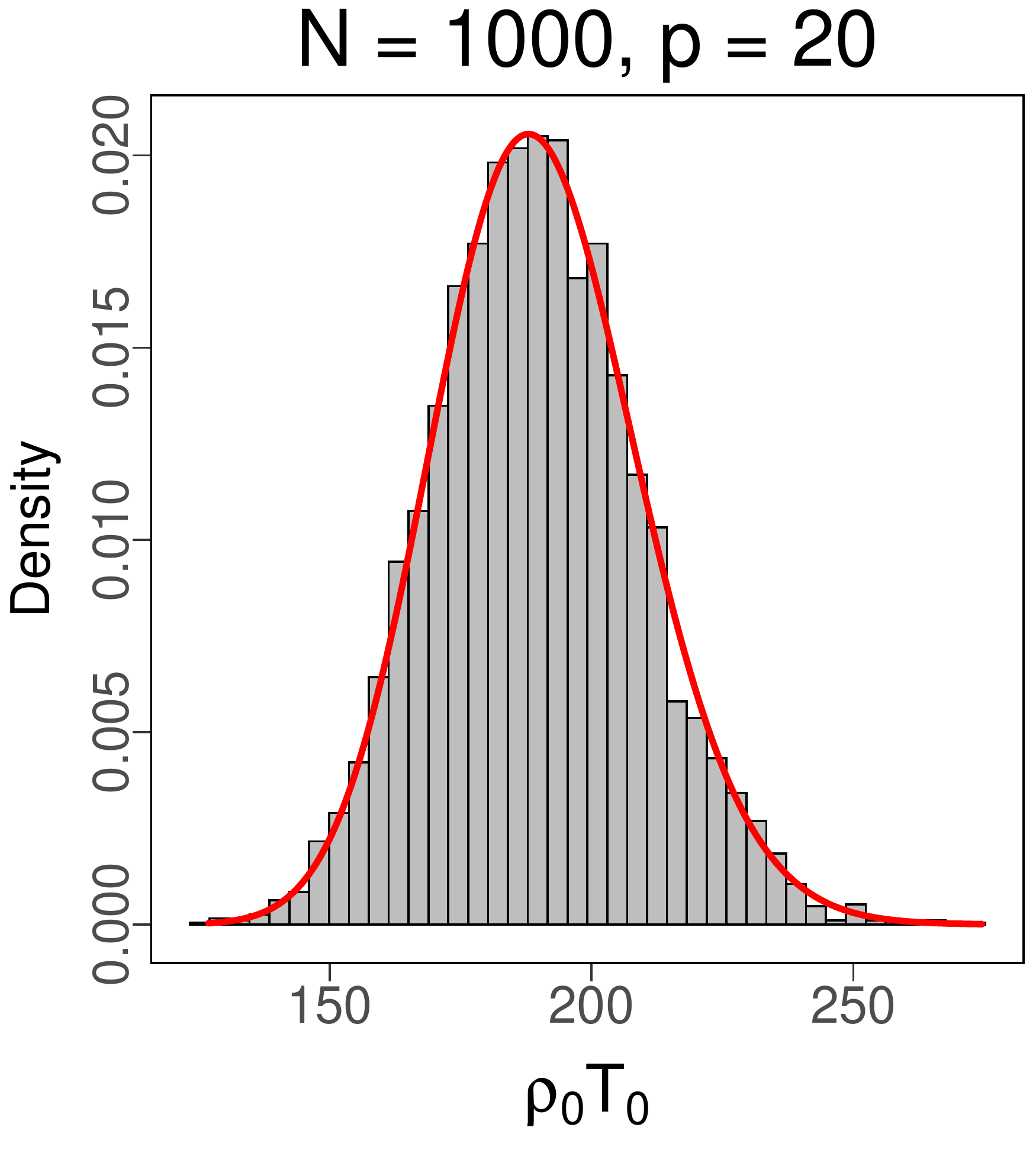}\ \  
\includegraphics[width=0.235\textwidth]{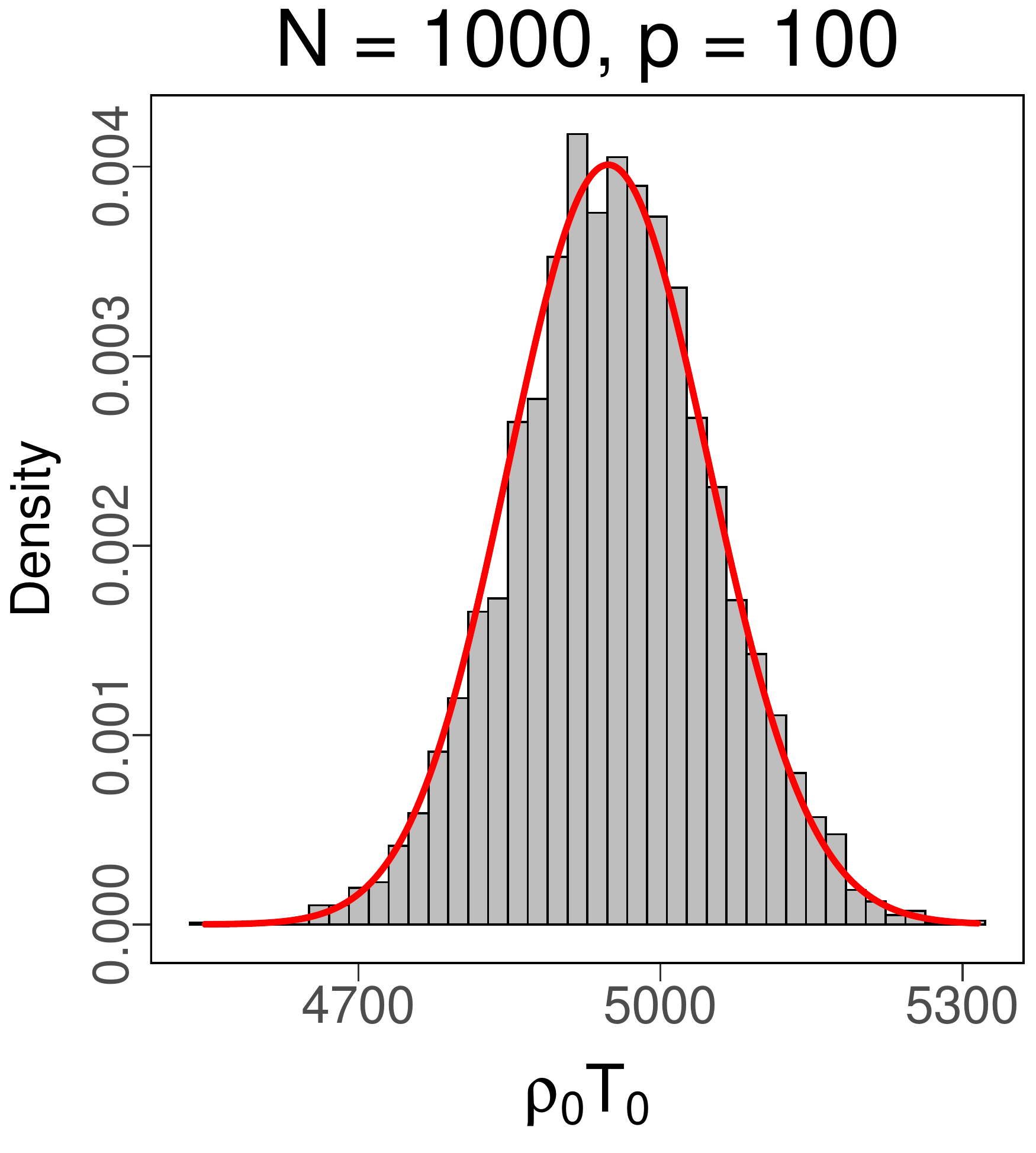}\ \ 
\includegraphics[width=0.235\textwidth]{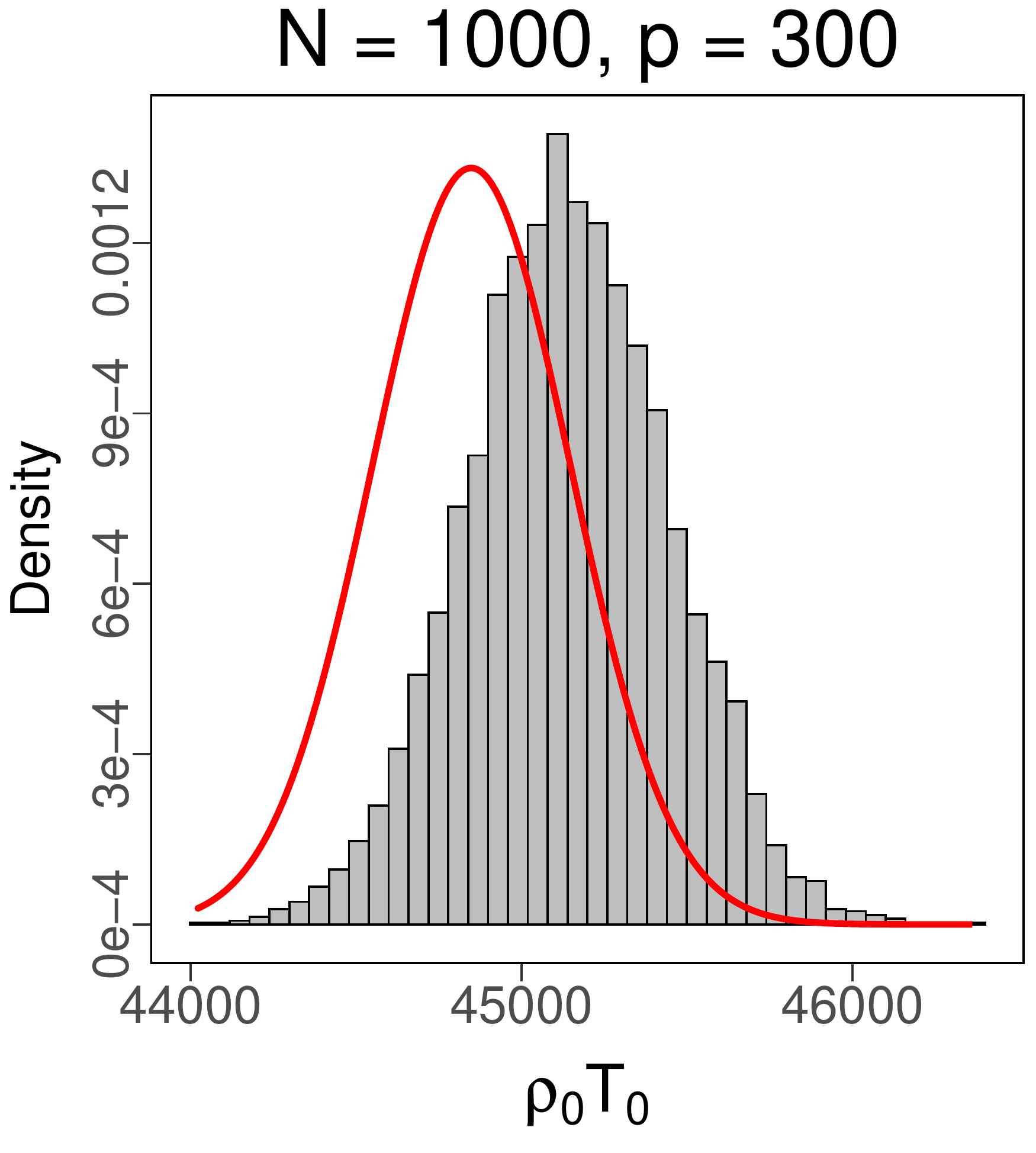}\ \ 
\includegraphics[width=0.235\textwidth]{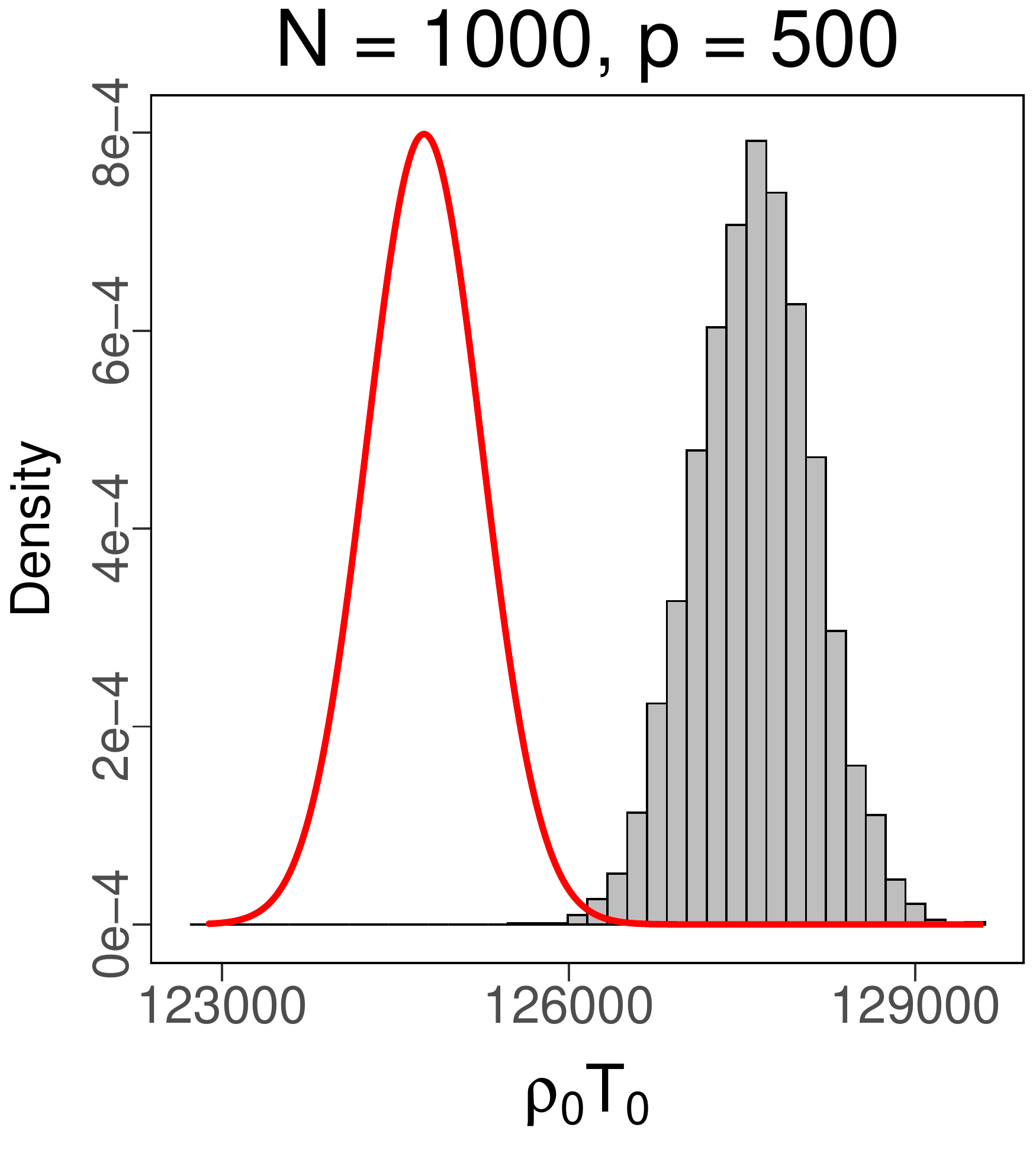}
\end{subfigure}


\caption{
Histograms of $T_0$ and $\rho_0T_0$ with the density curves of $\chi^2_{f_0}$} \label{fig:qqplot1}	
\end{figure}



From the two figures in the first column of  Figure \ref{fig:qqplot1}, 
we can see that when $p$ is small ($p=20$) compared to $N$, the density curve of $\chi^2_{f_0}$  approximates the histograms of $T_0$ and $\rho_0T_0$ well. This is consistent with the classical large sample theory in \eqref{eq1} and \eqref{eq2}.  However, as $p$ increases from 20 to 500, the density curve of $\chi^2_{f_0}$ moves farther away from the sample histograms of $T_0$ and $\rho_0T_0$, indicating the failure of the  chi-square approximation as $p$ increases. It is also interesting to note that the likelihood ratio test statistics without and with the Bartlett correction behave differently as $p$ increases, despite their similarity when $p$ is small. For instance, when $p=100$, $\chi_{f_0}^2$ already fails to approximate the distribution of $T_0$, but it can still well approximate that of the corrected statistic $\rho_0T_0.$  Nevertheless, when $p=300$ and 500, $\chi_{f_0}^2$ fails to approximate the distributions of both $T_0$ and $\rho_0T_0$, while the approximation biases differ. 
These numerical observations bring the following question in practice: how large the dimension $p$ with respect to the sample size $N$ can be so that we can still apply the classic chi-square approximation for the likelihood ratio test?




To provide a statistical insight into  this important practical issue, 
we derive the  necessary and sufficient condition to ensure the validity of the  chi-square approximation for the 
likelihood ratio test, as $p$ increases with $N$.  
Particularly, we first consider $H_{0,0}: k_0=0$ in Section \ref{sec2.1}, and provide the following Theorem \ref{thm1}.
 \begin{theorem} \label{thm1} 
 Suppose $N \geq p+5$. Let   $\chi_{f_0}^{2}(\alpha)$ denote the upper-level $\alpha$-quantile of the $\chi^2_{f_0}$ distribution. Under $H_{0,0}: k_0=0$, as $N\to \infty$, 
 \begin{enumerate}
   \item[(i)] $\sup_{\alpha \in (0,1)}|\Pr\{T_{0} > \chi_{f_0}^{2}(\alpha)\} - \alpha| \to 0,$ if and only if $\lim_{n\to\infty}p/N^{1/2} = 0$; 
\item[(ii)] $\sup_{\alpha \in (0,1)}|\Pr\{ \rho_0 \times T_{0} > \chi_{f_0}^{2}(\alpha)\} - \alpha| \to 0,$ if and only if $\lim_{n\to\infty}p/N^{2/3} = 0$. 
\end{enumerate}
\end{theorem}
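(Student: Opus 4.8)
\noindent\emph{Proof proposal.}
The plan is to reduce everything to a sum of independent random variables via the exact null distribution of $|\hat R_N|$. Writing $n=N-1$, under $H_{0,0}$ the statistic $T_0$ depends only on the scale-invariant $\hat R_N$, so we may take $X_i\sim\mathcal N(\mathbf 0_p,\mathrm I_p)$, and the classical decomposition of the likelihood ratio statistic for complete independence yields the stochastic representation
$$|\hat R_N|\ \overset{d}{=}\ \prod_{i=2}^{p} W_i,\qquad W_i\sim\mathrm{Beta}\!\left(\tfrac{n-i+1}{2},\tfrac{i-1}{2}\right)\ \text{independent.}$$
Consequently $T_0=-n\sum_{i=2}^{p}\log W_i$ is a sum of independent terms, so its cumulants are available in closed form. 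Using $\mathrm E(\log W_i)=\psi(\tfrac{n-i+1}{2})-\psi(\tfrac n2)$ and $\mathrm{Var}(\log W_i)=\psi'(\tfrac{n-i+1}{2})-\psi'(\tfrac n2)$, with $\psi$ the digamma function, I would write the exact mean $\mu_N=\mathrm E(T_0)$ and variance $\sigma_N^2=\mathrm{Var}(T_0)$ as finite sums and bound the higher cumulants similarly.

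The comparison to $\chi^2_{f_0}$ then proceeds through the normal approximation. When $p\to\infty$ I would establish $(T_0-\mu_N)/\sigma_N\Rightarrow\mathcal N(0,1)$ by verifying a Lyapunov condition for the independent summands, while the standardized $\chi^2_{f_0}$ is asymptotically $\mathcal N(0,1)$ as $f_0\to\infty$. Since $\chi^2_{f_0}(\alpha)$ is the exact quantile, substituting $t=\chi^2_{f_0}(\alpha)$ and standardizing shows that $\sup_\alpha|\Pr\{T_0>\chi^2_{f_0}(\alpha)\}-\alpha|\to0$ is equivalent to the two matching conditions $\sigma_N^2/(2f_0)\to1$ and $(\mu_N-f_0)/\sqrt{2f_0}\to0$ (for bounded $p$ the classical convergence $T_0\Rightarrow\chi^2_{f_0}$ is used directly, handled by a subsequence argument). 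A digamma/trigamma expansion gives $\sigma_N^2=2f_0(1+O(p/n)+O(1/p))$, so the variance condition holds automatically once $p/n\to0$ and is never the binding constraint; the threshold is therefore dictated by the mean.

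For the mean I would expand $\mu_N-f_0=\sum_{m=1}^{p-1}\{n[\psi(\tfrac n2)-\psi(\tfrac{n-m}{2})]-m\}$. Each summand is positive and behaves like $m^2/(2n)$, giving the leading order $\mu_N-f_0\sim p^3/(6n)$; dividing by $\sqrt{2f_0}\sim p$ yields $p^2/(6n)$, which vanishes if and only if $p/N^{1/2}\to0$, proving part (i). The ``only if'' direction needs no delicate expansion: the elementary inequality $\psi'(x)>1/x$ gives $\psi(\tfrac n2)-\psi(\tfrac{n-m}{2})>\log\frac{n}{n-m}>\frac mn+\frac{m^2}{2n^2}$, hence $\mu_N-f_0>\frac1{2n}\sum_{m=1}^{p-1}m^2\gtrsim p^3/n$ for all $N\ge p+5$, so if $p/N^{1/2}\not\to0$ the standardized mean gap stays bounded away from $0$ and the tail probabilities cannot match. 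For part (ii), the Bartlett factor contributes $\rho_0\mu_N-f_0=(\mu_N-f_0)-\frac{2p+5}{6n}\mu_N$; the correction is engineered so that the $p^3/n$ term cancels exactly, leaving a surviving contribution of order $p^4/n^2$ with a nonzero coefficient, whose standardization $p^3/n^2$ vanishes if and only if $p/N^{2/3}\to0$.

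The hard part is twofold. First, the equivalence between the Kolmogorov-type distance and the mean/variance conditions must be made \emph{uniform} in $\alpha$, which calls for a Berry--Esseen-type bound for the standardized sum rather than a plain central limit theorem. Second, the asymptotic expansions of $\mu_N$ and $\sigma_N^2$ must be controlled uniformly over the entire admissible range $N\ge p+5$---in particular when $p$ is close to $N$ some Beta parameters stay bounded, so the digamma expansions degrade and the Lyapunov and variance estimates need separate handling---and the exact, nonvanishing coefficient of the residual $p^4/n^2$ term after the Bartlett cancellation must be pinned down to secure the sharp $N^{2/3}$ threshold in part (ii).
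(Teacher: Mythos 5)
Your proposal is sound and lands on the same reduction as the paper---namely, that the uniform validity of the chi-square approximation is equivalent to two moment-matching conditions, $\sigma_N^2/(2f_0)\to 1$ and $(\mu_N-f_0)/\sqrt{2f_0}\to 0$, after which the thresholds $p^2/N\to0$ and $p^3/N^2\to0$ drop out of an asymptotic expansion of the mean gap---but you reach that reduction by a different technical vehicle. The paper works with the moment generating function $\mathrm{E}\{|\hat R_n|^{-hn}\}$ written as a ratio of multivariate Gamma functions and imports the high-dimensional expansions of $\Gamma_p(\cdot)$ from \citet{jiang2013} and \citet{jiang2015likelihood} to get the normal limit $(T_0+n\mu_{n,0})/(n\sigma_{n,0})\xrightarrow{D}\mathcal N(0,1)$; you instead use the exact representation $|\hat R_N|\overset{d}{=}\prod_{i=2}^p W_i$ with independent Beta factors (which is precisely the probabilistic content of that Gamma-ratio identity), obtain exact digamma/trigamma formulas for $\mu_N$ and $\sigma_N^2$, and invoke a Lyapunov CLT. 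Your route is more self-contained and buys two things the paper does not make explicit: exact finite-sample moments, and a clean monotone lower bound $\mu_N-f_0\ge\frac{1}{2n}\sum_{m<p}m^2$ valid for all $N\ge p+5$, which disposes of the ``only if'' direction of part (i) in one stroke (the paper instead splits into $p/n\to0$ and $p/n\to C\in(0,1]$ and shows the \emph{variance} condition fails in the latter regime via monotonicity of $g_1(C)=C^2+2\{C+\log(1-C)\}$). The paper's route buys ready-made uniform expansions, so the bookkeeping you flag as ``the hard part''---the Lyapunov/variance estimates when $p$ is close to $N$, and the exact residual coefficient after the Bartlett cancellation---is already packaged in the cited lemmas; the paper's computation confirms your claims, giving $f_0+n\mu_{n,0}=-p^3/(6n)+O(p^4/n^2)+o(p)$ and, after correction, $f_0+n\rho_0\mu_{n,0}=-p^4/(36n^2)+O(p^5/n^3)+o(p)$, so the coefficient is indeed nonzero. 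Two small cautions: a plain CLT plus P\'olya's theorem already gives the uniformity in $\alpha$, so Berry--Esseen machinery is not needed; and for part (ii) your mean-cancellation argument is an expansion in $p/n$, so the regime $p/n\to C\in(0,1]$ still needs the separate variance (or divergent-mean) argument that you only sketch---this is where the paper's explicit Case (ii.2) does real work.
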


%
 
In Theorem \ref{thm1}, $N \geq p+5$ is required for the technical proof. This condition is mild as $N \geq p+1$ is required for the existence of the likelihood ratio test statistic with probability one \citep{jiang2013}. 
Theorem \ref{thm1} (i) suggests that the chi-square approximation for $T_0$ in \eqref{eq1} starts to fail when the dimension $p$ approaches $N^{1/2}$, and (ii) shows that the chi-square approximation for $\rho_0T_0$ in \eqref{eq2} starts to fail when $p$ approaches $N^{2/3}$.  To further demonstrate the validity of Theorem \ref{thm1}, we conduct a simulation study as follows. 
 

\begin{numexample}\label{numex:2}
\textit{We take $p = \lfloor{N^{\varepsilon}}\rfloor$, where $N \in \{100, 500, 1000, 2000\}$ and $\varepsilon \in \{3/24,4/24,\ldots, 23/24\}$. For each combination of $(N,p)$, we generate  $X_i$ from $\mathcal{N}(\mathbf{0}_p,\mathrm{I}_p)$ for $i=1,\ldots, N$ independently, and conduct the likelihood ratio test with 
two chi-square approximations in \eqref{eq1}  and \eqref{eq2}, respectively. We repeat the procedure 1000 times to estimate the type \RNum{1} error rates with significance level 0.05, and then plot estimated type \RNum{1} error rates versus $\varepsilon$ in Figure \ref{fig:typeierror1}. 
 The left figure in Figure  \ref{fig:typeierror1} presents the results of the chi-square approximation for $T_0$ in \eqref{eq1}, where  the estimated type \RNum{1} error  begins to inflate when $\varepsilon$ approaches $1/2$. 
 In addition, the right figure in Figure  \ref{fig:typeierror1} presents the results of the chi-square approximation for $\rho_0T_0$ in \eqref{eq2}, where the estimated type \RNum{1} error  begins to inflate when $\varepsilon$ approaches $2/3$. 
 The two theoretical boundaries on $\varepsilon$ in Theorem \ref{thm1} are denoted by two vertical dashed lines in Figure  \ref{fig:typeierror1}.
 For each approximation, the theoretical and empirical values of $\varepsilon$ where the  approximation begins to fail are consistent.}
\end{numexample}

\bigskip
\begin{figure}[!ht]
\captionsetup[subfigure]{labelformat=empty}
\centering
\begin{subfigure}[b]{0.47\textwidth}
\centering
\caption{\quad \quad \small{Approximation \eqref{eq1} for $T_0$}} 
\includegraphics[width=\textwidth]{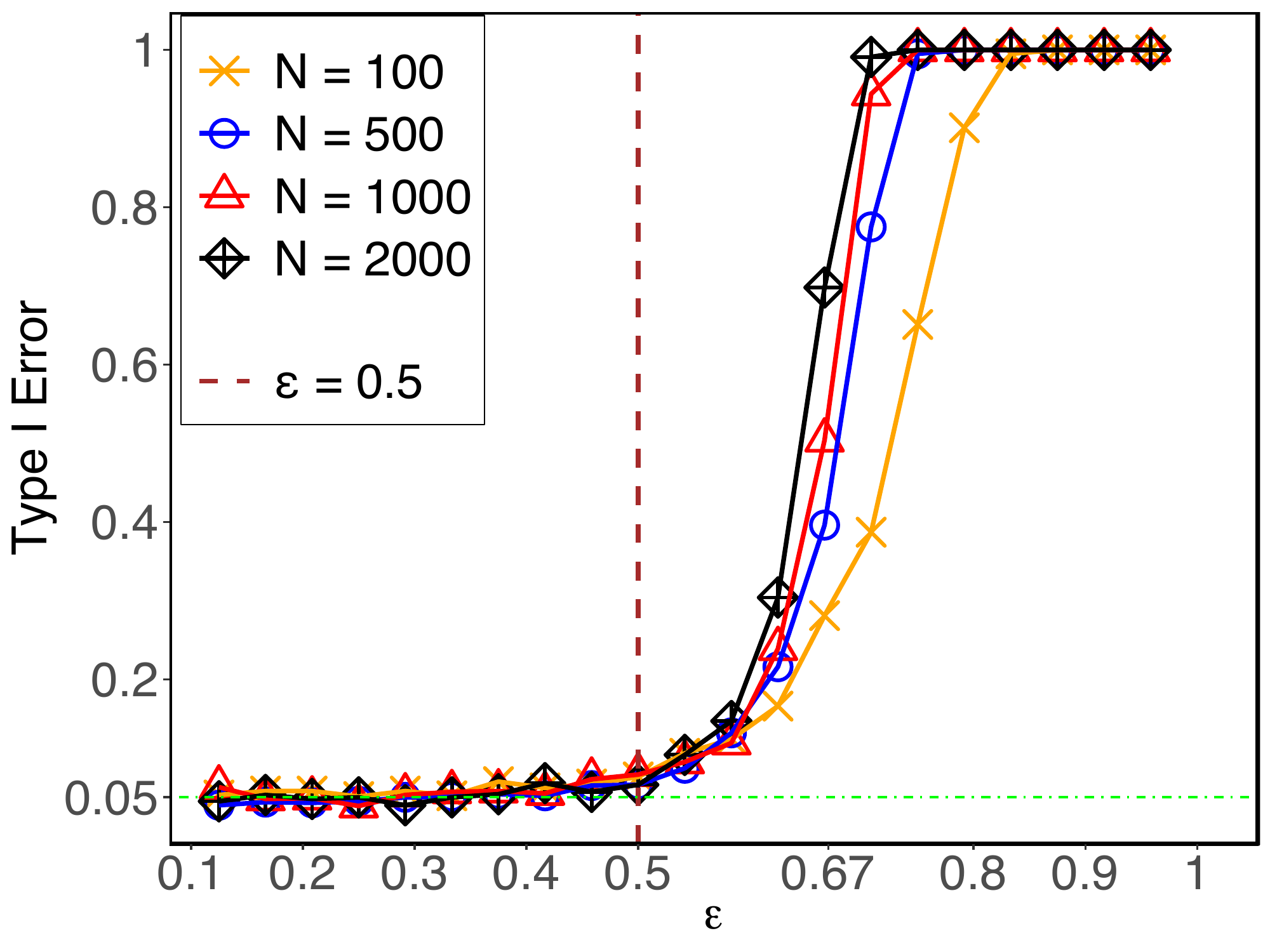}	 
\end{subfigure}\quad \quad 
\begin{subfigure}[b]{0.47\textwidth}
\centering
\caption{\quad \quad \small{Approximation \eqref{eq2} for $\rho_0T_0$}}
\includegraphics[width=\textwidth]{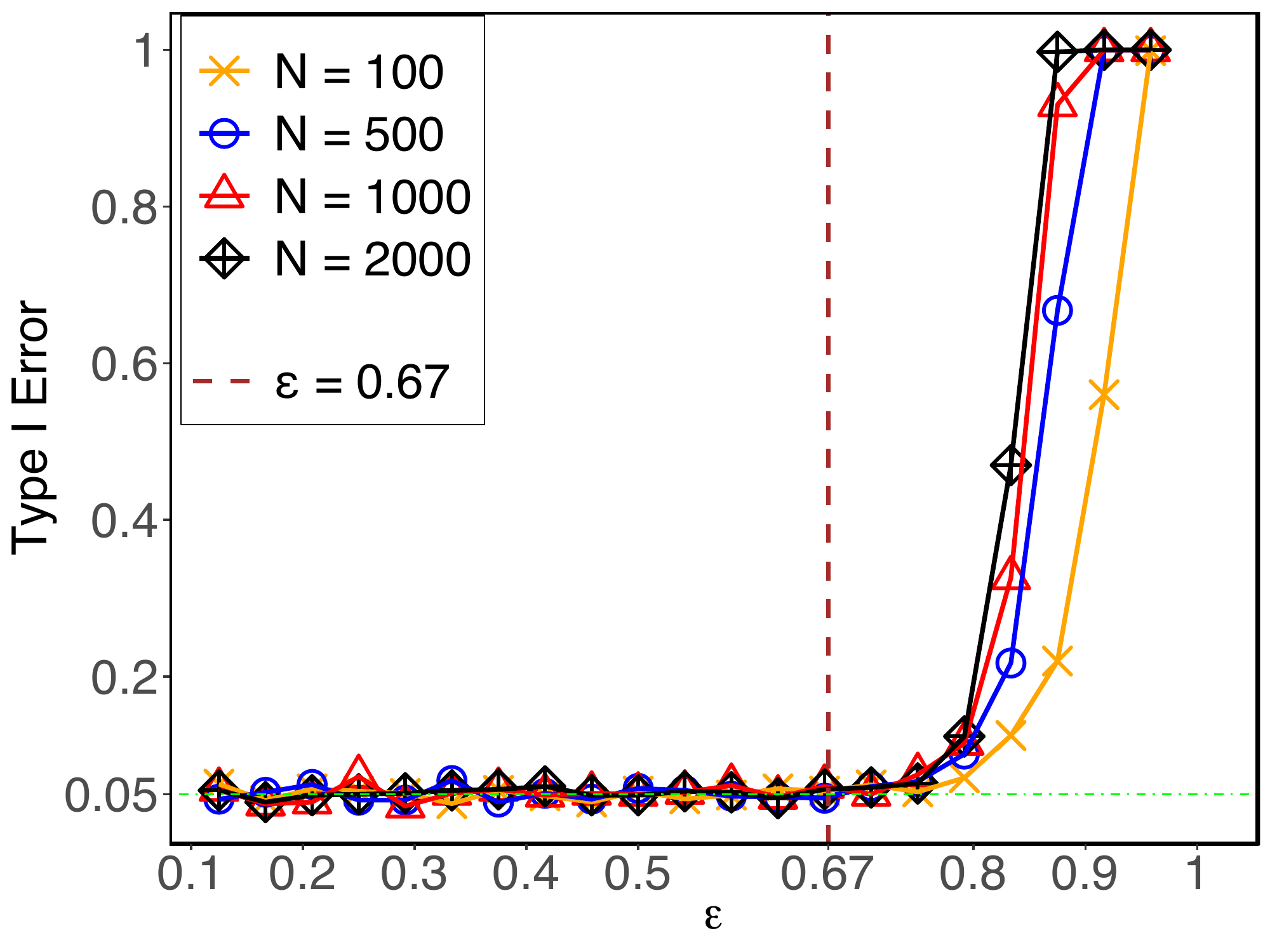}	
\end{subfigure}
\caption{Estimated type \RNum{1} error versus $\varepsilon$ when $k_0=0$}\label{fig:typeierror1}	
\end{figure}

We next investigate the sequential test for $H_{0,k}$ when $k\geq 1$. Under $H_{0,k}$, assume the true factor number is $k$, and  $\Lambda_k\Lambda_k^{\top}$ and $\Psi_k$ are the true values such that \eqref{eq01} holds with $\Lambda \Lambda^{\top}=\Lambda_k\Lambda_k^{\top}$ and $\Psi=\Psi_k$, where $\Lambda_k$ is a matrix of size $p\times k$, and $\Psi_k$ is a diagonal matrix. In classical multivariate analysis with fixed dimension and certain regularity conditions, it can be shown that  $\hat{\Lambda}_k\hat{\Lambda}_k^{\top}\xrightarrow{P} \Lambda_k\Lambda_k^{\top}$ and $\hat{\Psi}_k\xrightarrow{P} \Psi_k$, where $\xrightarrow{P}$ represents the convergence in probability; see, e.g., Theorem 14.3.1 in \cite{anderson1958introduction}. 
To facilitate the following theoretical analysis, we consider a simplified version of the test by assuming  $\Lambda_k \Lambda_k^{\top}$ and $\Psi_k$ are given, and define $\Sigma_k=\Lambda_k \Lambda_k^{\top}+\Psi_k$. 
Then we consider testing $H_{0,k}': \Sigma = \Sigma_k$, and the 
likelihood ratio test statistic can be expressed as 
$$T' = - (N-1)\log(|\hat{\Sigma}| \times |\Sigma_k|^{-1})+ (N-1)\{\mathrm{tr}(\hat{\Sigma} \Sigma_k^{-1}) - p\};$$
see Section 8.4 of  \cite{muirhead2009aspects}.
The test statistic $T'$  and $T_k$ in \eqref{eq:tkstat} are the same except that $T'$ is based on the true value $\Sigma_k=\Lambda_k \Lambda_k^{\top}+\Psi_k$, while $T_k$ is based on $\hat{\Sigma}_k=\hat{\Lambda}_k \hat{\Lambda}_k^{\top}+\hat{\Psi}_k$, with $\hat{\Lambda}_k \hat{\Lambda}_k^{\top}$ and $\hat{\Psi}_k$ being the maximum likelihood estimators of $\Lambda_k \Lambda_k^{\top}$ and $\Psi_k$, respectively, under the $k$-factor model.
Under the classical setting with $p$ fixed, the chi-square approximation of $T'$ is $T'\xrightarrow{D}\chi^2_{f'},$ where $f'=p(p+1)/2$, and 
by the Bartlett correction with $\rho'=1- \{6(N-1)(p+1)\}^{-1} (2p^2+3p-1)$, we have  
 $\rho' T'\xrightarrow{D}\chi^2_{f'}$. For this simplified testing problem $H_{0,k}'$, the test statistic $T'$ and its limit do not depend on the number of factors $k$, as the true 
 $\Lambda_k\Lambda_k^{\top}$ and $\Psi_k$ are assumed to be given. 


Considering $H_{0,k}'$ and the statistic $T'$, we next provide the necessary and sufficient condition on when the chi-square approximation for the likelihood ratio test fails as the data dimension $p$ increases under $H_{0,k}'$. 
\begin{theorem} \label{thm2} 
 Suppose $N\geq p+2$. Under $H_{0,k}':\ \Sigma = \Lambda_k\Lambda_k^{T} +\Psi_k$, with given $\Lambda_k$ and $\Psi_k$, and $k=k_0$, as $N\to \infty$, 
 \begin{enumerate}
   \item[(i)] $\sup_{\alpha \in (0,1)}|\Pr\{T' > \chi_{f'}^{2}(\alpha)\} - \alpha| \to 0,$ if and only if $\lim_{n\to\infty}p/N^{1/2} = 0$; 
\item[(ii)] $\sup_{\alpha \in (0,1)}|\Pr\{ \rho' \times T' > \chi_{f'}^{2}(\alpha)\} - \alpha| \to 0,$ if and only if $\lim_{n\to\infty}p/N^{2/3} = 0$. 
\end{enumerate}
\end{theorem}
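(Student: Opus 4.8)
The plan is to reduce $T'$ to a functional of a standard Wishart matrix and then carry out a sharp moment analysis. Write $n=N-1$ and set $Y_i=\Sigma_k^{-1/2}X_i$; under $H_{0,k}'$ the $Y_i$ are i.i.d.\ $\mathcal N(\mathbf 0_p,\mathrm I_p)$, so $A:=n\,\Sigma_k^{-1/2}\hat\Sigma\,\Sigma_k^{-1/2}\sim W_p(n,\mathrm I_p)$. Because $|\hat\Sigma|\,|\Sigma_k|^{-1}=|A/n|$ and $\mathrm{tr}(\hat\Sigma\Sigma_k^{-1})=\mathrm{tr}(A/n)$, the statistic collapses to
\[
T'=n\sum_{j=1}^p\bigl(\lambda_j-\log\lambda_j-1\bigr),
\]
where $\lambda_1,\dots,\lambda_p$ are the eigenvalues of $A/n$. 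In particular $T'$ is distribution-free under the null and does not depend on $\Lambda_k,\Psi_k$ (consistent with the remark preceding the theorem), and the claim reduces to controlling the Kolmogorov distance between $\mathcal L(T')$ and $\chi^2_{f'}$, since $\sup_{\alpha}|\Pr\{T'>\chi^2_{f'}(\alpha)\}-\alpha|=\sup_x|\Pr\{T'\le x\}-\Pr\{\chi^2_{f'}\le x\}|$.

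The analytic core is an exact mean/variance computation through the Bartlett decomposition $A=LL^\top$, which writes $T'$ as a sum of independent pieces: the strictly lower-triangular entries contribute $\sum_{i>j}L_{ij}^2\sim\chi^2_{p(p-1)/2}$, while each diagonal contributes $L_{jj}^2-n\log L_{jj}^2$ with $L_{jj}^2\sim\chi^2_{n-j+1}$. Using $E[\log\chi^2_m]=\psi(m/2)+\log 2$, $\mathrm{Var}(\log\chi^2_m)=\psi'(m/2)$, and $\mathrm{Cov}(\chi^2_m,\log\chi^2_m)=2$, I would obtain the closed forms
\[
E[T']=-n\sum_{j=1}^p\Bigl\{\psi\bigl(\tfrac{n-j+1}{2}\bigr)+\log 2-\log n\Bigr\},\qquad
\mathrm{Var}(T')=n^2\sum_{j=1}^p\psi'\bigl(\tfrac{n-j+1}{2}\bigr)-2np,
\]
and then expand via the digamma/trigamma asymptotics $\psi(x)=\log x-\tfrac1{2x}+O(x^{-2})$ and $\psi'(x)=\tfrac1x+O(x^{-2})$. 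The leading terms combine to $E[T']=f'+\tfrac{p^3}{6n}(1+o(1))$ and $\mathrm{Var}(T')=2f'(1+o(1))$. Since $\sqrt{2f'}\asymp p$, the standardized bias is of order $p^2/n$, which pinpoints $p/N^{1/2}\to0$ as the threshold in (i).

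For sufficiency of (i) I would establish $(T'-f')/\sqrt{2f'}\xrightarrow{D}\mathcal N(0,1)$ when $p=o(N^{1/2})$: the mean and variance already match to the needed order, and a Lyapunov central limit theorem applied to the independent summands of the Bartlett decomposition supplies asymptotic normality (for any bounded-$p$ subsequence one instead invokes the classical fixed-dimension limit $T'\to\chi^2_{f'}$ stated above). Because $(\chi^2_{f'}-f')/\sqrt{2f'}\to\mathcal N(0,1)$ as well, P\'olya's theorem upgrades both to uniform CDF convergence to $\Phi$, giving $\sup_\alpha|\cdots|\to0$. For necessity, along any subsequence on which $p/N^{1/2}\not\to0$ one has $p\to\infty$ and $p^2/n\to c\in(0,\infty]$, so the same CLT shows the standardized statistic converges to $\mathcal N(\delta,1)$ with $\delta=\lim p^2/(6n)>0$ (or escapes to $+\infty$), which differs from the $\mathcal N(0,1)$ limit of the standardized $\chi^2_{f'}$; hence the Kolmogorov distance stays bounded away from $0$.

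Part (ii) uses the same machinery after noting that $\rho'$ is engineered to cancel the leading bias: multiplying the expansion of $E[T']$ by $\rho'=1-\tfrac{2p^2+3p-1}{6n(p+1)}$ annihilates the $p^3/n$ term (and, as it happens, the $p^2/n$ and $p/n$ terms), leaving $E[\rho'T']-f'=\tfrac{p^4}{36n^2}(1+o(1))$; the variance still matches $2f'$, so the standardized bias becomes order $p^3/n^2$, which vanishes iff $p/N^{2/3}\to0$, and the CLT/P\'olya argument carries over verbatim. The main obstacle is the digamma/trigamma bookkeeping needed to extract the exact leading bias constants and to verify the cancellation induced by $\rho'$, together with checking the Lyapunov condition uniformly across the admissible growth regime; the Wishart reduction and the P\'olya upgrade are routine.
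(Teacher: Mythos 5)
Your proposal is correct in substance and reaches the same thresholds, but it takes a genuinely different route from the paper. The paper works entirely through the moment generating function: after the same reduction $\Sigma_k^{-1/2}X_i$, it invokes the exact expression for $\mathrm{E}\{\exp(hT')\}$ (Corollary 8.4.8 of Muirhead) and then applies approximate expansions of the multivariate Gamma function $\Gamma_p(\cdot)$ due to Jiang--Yang and Jiang--Qi to prove a new CLT, $(T'+n\mu_n)/(n\sigma_n)\xrightarrow{D}\mathcal N(0,1)$ (Lemma \ref{lm:1}), with $\mu_n$ and $\sigma_n$ given in closed form uniformly over $p/n\to C\in[0,1]$; the phase transition then falls out of matching $\sqrt{2f'}/(n\sigma_n)\to 1$ and $(f'+n\mu_n)/(n\sigma_n)\to 0$ by Taylor expansion, with a Bolzano--Weierstrass subsequence argument handling general sequences. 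You instead exploit the Bartlett decomposition to write $T'$ as a sum of independent terms, compute its exact mean and variance via digamma/trigamma identities, and invoke a Lyapunov CLT plus P\'olya's theorem. Your route is more elementary and self-contained (no multivariate-Gamma expansion needed), and it yields exact finite-sample moments that make the bias $p^3/(6n)$ and the $\rho'$-cancellation transparent; the cost is that the Lyapunov verification and the polygamma bookkeeping --- which you acknowledge but do not carry out --- are precisely where the labor sits, whereas the paper outsources the analogous labor to the cited Gamma-function lemmas. Two small points to tighten: (a) in the necessity direction, when $p/n\to C\in(0,1]$ the \emph{variance} normalization also fails (the paper's $g_1(C)<0$ computation), so the standardized statistic does not converge to $\mathcal N(\delta,1)$ there; the conclusion that the Kolmogorov distance stays bounded away from zero is still right, but the stated limit is not. (b) Note that $p^2/n\to c\in(0,\infty)$ forces $p/n\to 0$, so your $\mathcal N(c/6,1)$ limit is only valid in that sub-regime; the proportional regime must be treated separately, as the paper does in its Case (i.2).
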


\begin{remark}\label{rm:divergep}
For the more general testing problem $H_{0,k}$,  we need to obtain the maximum likelihood estimators $\hat{\Lambda}_k$ and $\hat{\Psi}_k$, and then conduct the likelihood ratio test with chi-square  approximations \eqref{eq3} or \eqref{eq4}. 
When the number of latent factors $k$ is fixed compared to $N$ and $p$, we note that  $\rho_k/\rho'$ and $f_k/f'$ asymptotically converge to 1.  
Furthermore, if  $\hat{\Lambda}_k\hat{\Lambda}_k^{\top}+\hat{\Psi}_k$  approximates the true $\Lambda_k\Lambda_k^{\top}+\Psi_k$ sufficiently well, we  expect that the  conclusions in Theorem \ref{thm2} would hold for the likelihood ratio test under the null hypothesis $H_{0,k}$ similarly. 
In particular, when $k$ is fixed as $N \to \infty$, consistent estimation of ${\Lambda}_k$ and ${\Psi}_k$ has been discussed under  both fixed $p$ 
 in the classical literature   
\cite[see, e.g.,][Theorem 14.3.1]{anderson1958introduction} and $p\to \infty$ 
 in recent literature on high-dimensional factor analysis model 
\citep[see, e.g.,][]{bai2012statistical}. 
When $k$ also diverges with $N$ and $p$, an asymptotic regime that is less investigated in the literature,
 deriving a similar condition for the chi-squared approximation would require accurate characterizations of the biases of estimating $\Lambda_k$ and $\Psi_k$,  
which, however, would be  challenging and need new developments of high-dimensional theory and methodology. 
\end{remark}

 We next demonstrate the theoretical results through the following numerical study.



\begin{numexample}\label{numex:3}
\textit{We consider the likelihood ratio test under $H_{0,k}$ with $k=k_0 \in\{1,3\}$. 
(I) When $k_0=1,$ under $H_{0,1}$, we set 	$\Lambda = \rho \times \mathbf{1}_{p}$ and $\Psi = (1-\rho^{2})\mathrm{I}_{p}$, with $\rho = 0.3$.
(II) When $k_0=3$,  under $H_{0,3}$, we  set $\Psi = (1-\rho^{2})\mathrm{I}_{p}$ and 
$$\Lambda = 
    \begin{bmatrix}
    \rho \times \mathbf{1}_{p_1}& \mathbf{0}_{p_1}& \mathbf{0}_{p_1}\\
    \mathbf{0}_{p_1}&\rho \times \mathbf{1}_{p_1}& \mathbf{0}_{p_1}\\
   \mathbf{0}_{p-2p_1}& \mathbf{0}_{p-2p_1}&\rho \times \mathbf{1}_{p-2p_1}
    \end{bmatrix},$$
where $p_1=\lfloor p/3 \rfloor$, $\rho = 0.6$, and $\mathbf{1}_{p_1}$ represents a $p_1$-dimensional vector with all one entries.  
For both cases, we  set $p = \lfloor{N^{\varepsilon}}\rfloor$, where $N \in \{100, 500,1000,2000\}$ and $\varepsilon \in \{8/24,7/24,...,23/24\}$.
And we generate each observation $X_i, i=1,\ldots, N$, from $\mathcal{N}(\mathbf{0},\Lambda\Lambda^{\top}+\Psi)$ independently, and conduct the likelihood ratio test with the function \textsf{factanal()} in \textsf{R}. Similarly to Figure \ref{fig:typeierror1}, we plot the estimated type \RNum{1} error rates (based on 1000 replications) versus $\varepsilon$  for two approximations \eqref{eq3} and \eqref{eq4}, where the results of case (I) are in Figure \ref{fig:typeierror2}, and the results of case (II) are in Figure \ref{fig:typeierror3}.}
\end{numexample}


%

\bigskip
\begin{figure}[!ht]
\captionsetup[subfigure]{labelformat=empty}
\centering
\begin{subfigure}[b]{0.47\textwidth}
\centering
\caption{\quad \quad \small{Approximation \eqref{eq3} for $T_1$}}
\includegraphics[width=\textwidth]{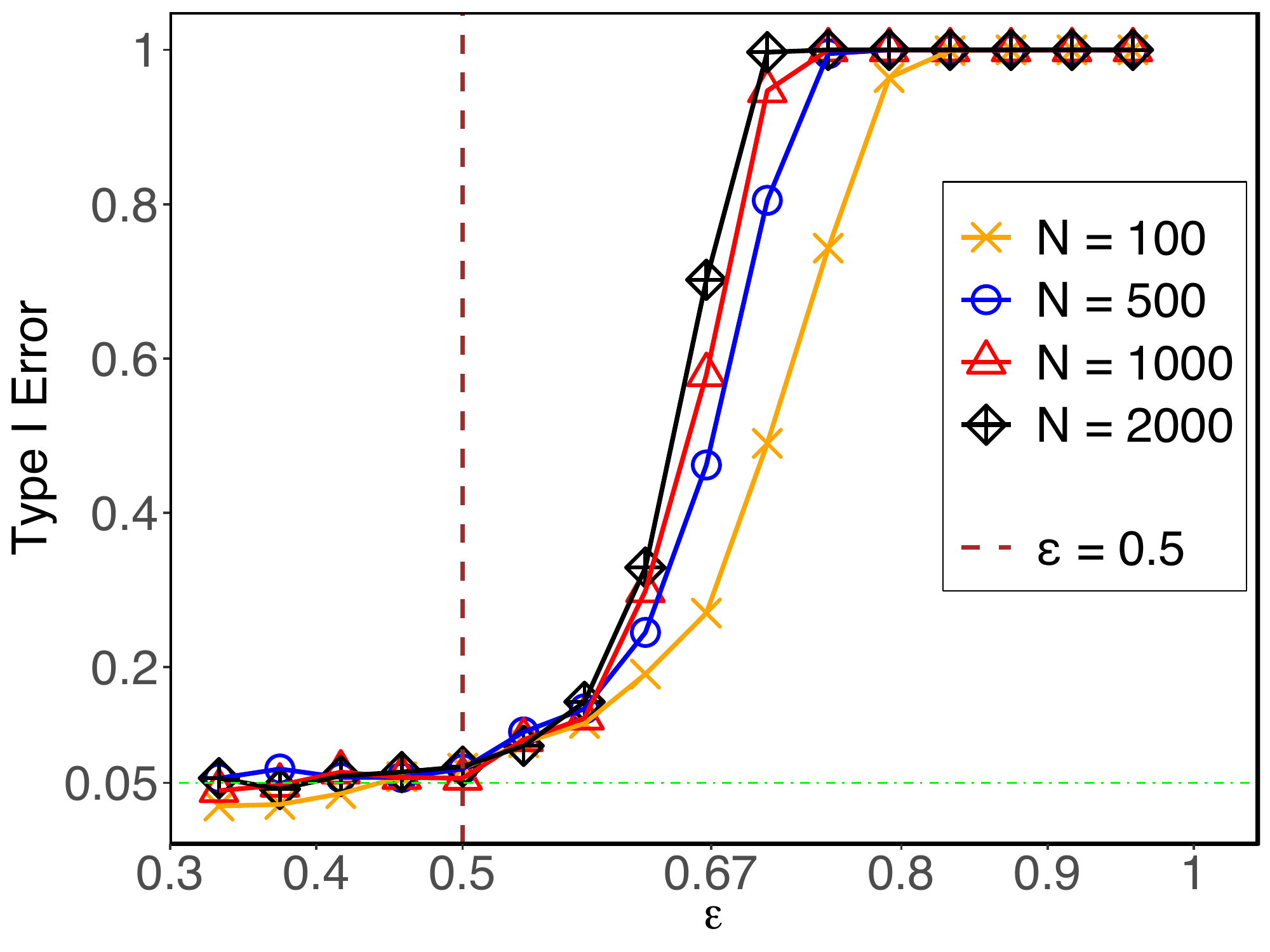}	
\end{subfigure}\quad \quad 
\begin{subfigure}[b]{0.47\textwidth}
\centering
\caption{\quad \quad \small{Approximation \eqref{eq4} for $\rho_1T_1$}}
\includegraphics[width=\textwidth]{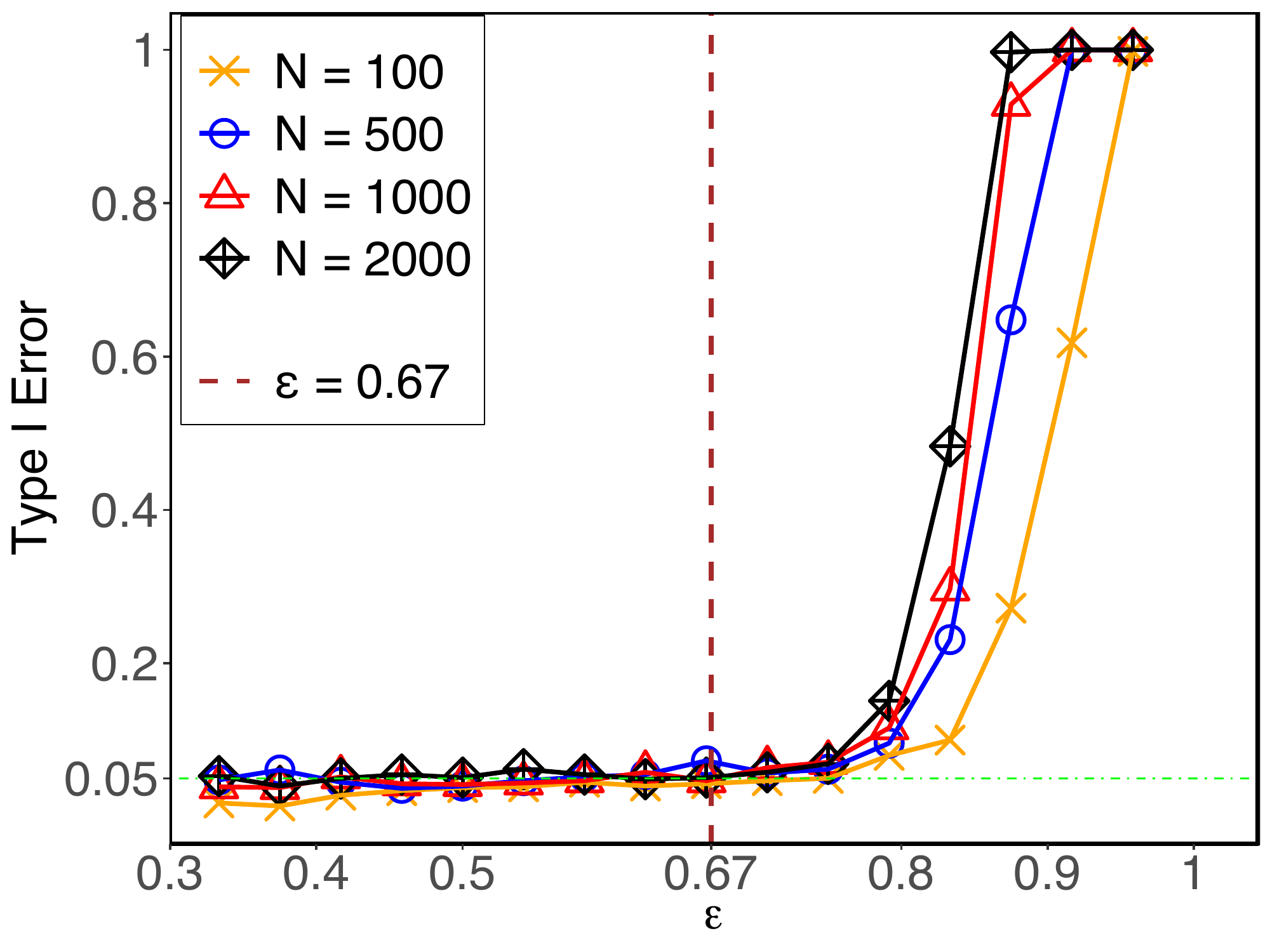}	
\end{subfigure}
\caption{Estimated type \RNum{1} error versus $\varepsilon$ when $k_0=1$}\label{fig:typeierror2}	
\end{figure}

%
%
%

\begin{figure}[!ht]
\captionsetup[subfigure]{labelformat=empty}
\centering
\begin{subfigure}[b]{0.47\textwidth}
\centering
\caption{\quad \quad \small{Approximation \eqref{eq3} for $T_3$}}
\includegraphics[width=\textwidth]{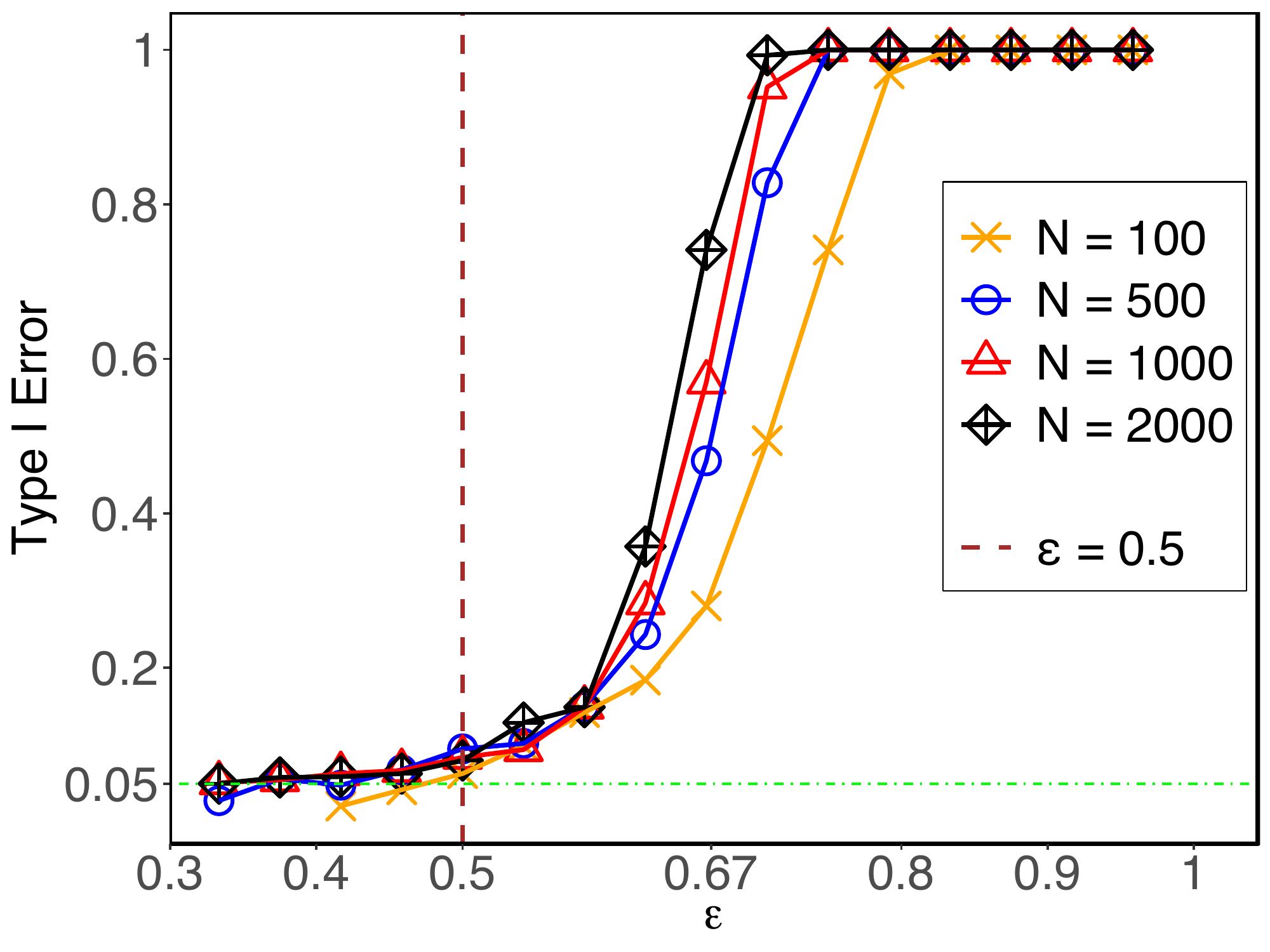}	
\end{subfigure}\quad \quad 
\begin{subfigure}[b]{0.47\textwidth}
\centering
\caption{\quad \quad \small{Approximation \eqref{eq4} for $\rho_3T_3$}}
\includegraphics[width=\textwidth]{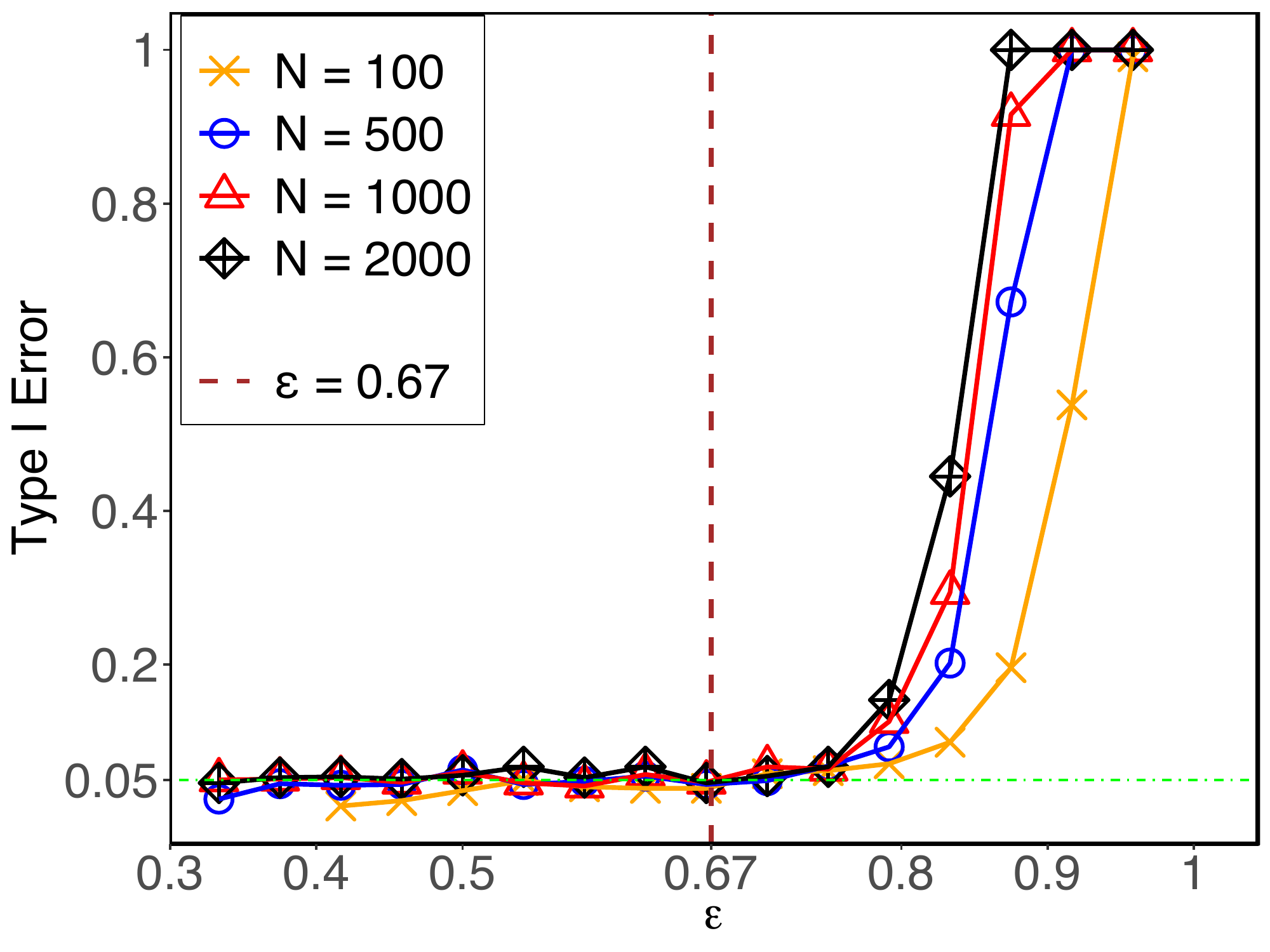}	
\end{subfigure}
\caption{Estimated type \RNum{1} error versus $\varepsilon$ when $k_0=3$}\label{fig:typeierror3}	
\end{figure}

Similarly to Numerical Example \ref{numex:2}, Numerical Example \ref{numex:3} also demonstrates that 
  the empirical values of $\varepsilon$, where the chi-square approximations start to fail, are consistent with the corresponding theoretical results. The necessary and sufficient conditions therefore would provide simple quantitative guidelines to check in practice. 
 In addition, it is worth mentioning that the conditions in Theorems \ref{thm1} and \ref{thm2} also reflect the biases of the chi-square approximations. For instance, considering the likelihood ratio test for $H_{0,0}$,  by the proof of Theorem \ref{thm1},  when $p/N \to 0$, we obtain  that $\mathrm{E}(T_0-\chi^2_{f_0})\times \{\mathrm{var}( \chi^2_{f_0} )\}^{-1/2} $ 
 is approximately $C_1 p^2/N,$ 
 and   $\mathrm{E}(\rho_0\times T_0-\chi^2_{f_0})\times \{\mathrm{var}( \chi^2_{f_0} )\}^{-1/2}$ is approximately  $C_2 p^3/N^2,$ where $C_1$ and  $C_2$ are positive constants.  This suggests that the mean of the chi-square limit will become  smaller than the means of $T_0$ and $\rho_0T_0$ as $p$ increases, which is consistent with the observed phenomenon in Figure \ref{fig:qqplot1}.

Moreover, Figures  \ref{fig:typeierror1}--\ref{fig:typeierror3} show   
  that the estimated type \RNum{1} error of the likelihood ratio test increases as $\epsilon$ increases. 
This can provide one possible explanation for the  well-known finding that the likelihood ratio test tends to overestimate the number of factors \citep{Hayashi2007}. 
In particular, let $\hat{k}$ denote the number of factors estimated by the sequential procedure described in Section \ref{sec2.1}, and let $k_0$ denote the  true number of factors. 
Note that in the sequential procedure, rejecting $H_{0,k_0}$ leads to an overestimation of the number of factors, i.e.,  $\hat{k}>k_0$.
Thus, when the type \RNum{1} error of testing $H_{0,k_0}$ inflates  as in Figures  \ref{fig:typeierror1}--\ref{fig:typeierror3}, 
the probability of rejecting $H_{0,k_0}$ would also increase, which consequently suggests an inflation of the probability of overestimating the number of factors, $\hat{k}>k_0$.
We also conduct simulation studies in Section \ref{sec:simfactor} to demonstrate the performance of estimating the number of factors using the likelihood ratio test.
The numerical results are consistent with the above theoretical analyses and show that the procedure begins to overestimate the number of factors when the type \RNum{1} error begins to inflate.
 

Furthermore, Theorems \ref{thm1} and \ref{thm2} indicate that given the same sample size, the chi-square approximation with the Bartlett correction can hold for a larger $p$ than the one without the Bartlett correction.  
 This explains the patterns observed in Figure \ref{fig:qqplot1}.    
Under the classical settings where $p$ is fixed, researchers have shown that the Bartlett correction can improve the convergence rate of the likelihood ratio test statistic from $O(N^{-1})$ to $O(N^{-2})$; however, this result does not apply to the high-dimensional setting  with $p$  increasing with $N$. Our theoretical results in Theorems \ref{thm1} and \ref{thm2} provide a more precise description on how the Bartlett correction improves the chi-square approximations for  high-dimensional data, in terms of the failing boundary of $p$ with respect to $N$.

\begin{remark} \label{rm:difference}
Similar phase transition phenomena were  discussed in \cite{he2020phase}. 
However, we point out that this paper considers different problem settings. 
In particular, \cite{he2020phase} discussed several problems on testing  mean vectors and covariances, whereas   
Theorem \ref{thm1} examines testing correlation matrices. 
Moreover, Theorem \ref{thm2} considers a problem of testing the covariance equal  to a given $k$-factor matrix, which was not discussed in  \cite{he2020phase}. To establish the result, it is required to derive a new high-dimensional asymptotic result given as Lemma \ref{lm:1} in the Appendix of this paper.
\end{remark}

\section{Discussions}\label{sec5}


This paper investigates the influence of the data dimension on the popularly used likelihood ratio test in high-dimensional exploratory factor analysis. 
For the  likelihood ratio test without or with the Bartlett correction, we derive the necessary and sufficient conditions to ensure the validity of the chi-square approximations under the corresponding null hypothesis.
The developed theoretical conditions only depend on the relationship between the data dimension and the sample size, and would provide simple quantitative guidelines to check in practice. 




The theoretical results in this paper are established under the common normality assumption of the observations $\{X_i, i=1,\ldots, N\}$.
To illustrate the robustness of the theoretical results to the normality assumption, we  conduct additional simulation studies with $X_i$'s  following a discrete distribution or a heavy-tailed $t$-distribution in  Appendix. Similar numerical findings are observed when detecting the existence of factors, which suggests that the validity of the theoretical results and the usefulness of the developed conditions in practice.  Please see Section \ref{sec:addsimu} in Appendix. 

Moreover, this paper focuses on controlling the type \RNum{1} error when testing a given null hypothesis,
	whereas 
	 deciding the number of factors  would involve  multiple steps of hypothesis testing in the sequential procedure.
 When the derived phase transition conditions are satisfied, our theoretical results suggest that the type I error of testing  corresponding null hypothesis can be asymptotically controlled. 	 
However, the probability of 
correctly deciding the true number of factors    
 relies on not only the type \RNum{1} error but also the power of testing each hypothesis in the sequential procedure. 
The power of the likelihood ratio test depends on certain complicated hypergeometric functions \citep{muirhead2009aspects}, which would be very challenging to investigate under high dimensions. 
We would like to leave this interesting problem as a  future study. 
In addition to the likelihood ratio test, it is also of interest to develop other efficient methods for deciding the number of factors in high-dimensional settings  \citep[see, e.g.,][]{bai2002determining,chen2020determining}. 
When applying the likelihood ratio test in the exploratory factor analysis, it is worth noting that the data dimension $p$ is not the only condition to consider.  
Researchers have discussed various other regularity conditions such as small sample size \citep{maccallum1999sample,mundfrom2005minimum,Winter2009,Winter2012}, nonnormality \citep{yuan2002unified,barendse2015using}, and  rank deficiency \citep{Hayashi2007}.  
 The results in this paper only provide one necessary requirement to check in the high-dimensional exploratory factor analysis. 

The results in this paper are also 
related to the important design problem on minimum sample size requirement for the exploratory factor analysis
 \citep{velicer1998affects,mundfrom2005minimum}. 
 The existing literature have conducted extensive simulation studies to explore what is the minimum sample size $N$ required or how large the ratio $N/p$ should be. In this paper, we derive theoretical results  suggesting that we may also consider the polynomial relationship between $N$ and $p$.  Specifically, given the number of variables $p$ to consider, the sample size should be at least $p^2$ to apply the likelihood ratio test, and at least $p^{3/2}$ to apply the likelihood ratio test with the Bartlett correction. This may provide helpful  statistical insights into the practice of exploratory factor analysis.

Although this paper focuses on the exploratory factor analysis, we expect that the failure of chi-square approximations under high dimensions can happen generally in other latent factor modeling problems such as the confirmatory factor analysis \citep{thompson2004exploratory,koranindicators2020} and the exploratory item factor analysis \citep{reckase2009multidimensional,chen2019joint}. Moreover, the phenomena introduced in this paper may also occur for other fit indexes that involve certain chi-square limit, such as the root mean square error of approximation  \citep{steiger2016notes}.
New high-dimensional theory and methodology for these problems would need to be further investigated.

\section*{Acknowledgement}
The authors are grateful to the Editor-in-Chief Professor Matthias von Davier, an Associate Editor, and  three referees for their valuable comments and suggestions. This research is partially supported by NSF CAREER SES-1846747, DMS-1712717, and SES-1659328.

\bibliographystyle{apalike}
\bibliography{Res_Bib}

\vspace{4em}


\begin{center}
	\bf{\Large{Appendix}}
\end{center}

\appendix

\noindent This appendix presents the technical proofs in Section \ref{sec:pfall} and additional simulations in Section \ref{sec:appendixsimu}. 




\section{Proofs} \label{sec:pfall}

We prove  Theorems \ref{thm1} and \ref{thm2} in Sections \ref{sec:pfthm1} and \ref{sec:pfthm2}, respectively, and provide a required lemma and its proof in Section \ref{sec:lemma}. 
In the following proofs, 
for two sequences of number $\{a_N: N\geq 1\}$ and $\{b_N: N\geq  1\}$, $a_N=O(b_N)$ denotes $\limsup_{n\to \infty}|a_N/b_N|<\infty$, and $a_N=o(b_N)$ denotes $\lim_{N\to \infty}a_N/b_N=0.$


\subsection{Proof of Theorem \ref{thm1}}\label{sec:pfthm1}
To derive the necessary and sufficient condition on the dimension of data,  it is required to correctly understand the limiting behavior of the likelihood ratio test statistic under both  low- and high-dimensional settings. In particular, we examine the limiting distribution of the  likelihood ratio test statistic based on its moment generating function.
For easy presentation in the technical proof, we let $n=N-1$ below. 
Then we can write $T_0=-n\log |\hat{R}_n|$.  Under the conditions
of Theorem \ref{thm1}, by Theorem 5.1.3 in \citet{muirhead2009aspects} and  Lemma 5.10 in \citet{jiang2013}, we know that there exists a small constant $\delta_0>0$ such that for $h\in (-\delta_0, \delta_0)$,  
\begin{eqnarray*}
\mathrm{E}\{\exp(h\times T_0)\}=\mathrm{E}\{|\hat{R}_n|^{-hn}\} =\left\{\frac{\Gamma(n/2)}{\Gamma(n/2-hn)}\right\}^{p} \times \frac{\Gamma_{p}(n/2 -hn)}{\Gamma_{p} (n/2)},\notag
\end{eqnarray*} 
where $\Gamma(z)$ denotes the Gamma function, and $\Gamma_p(z)$ denotes the multivariate Gamma function satisfying $	\Gamma_p(z)=\pi^{p(p-1)/4}\prod_{j=1}^p \Gamma\{z-(j-1)/2\}.$ 




\paragraph{Part (i) The chi-square approximation.}
When $p$ is fixed compared to  $N$, by applying Stirling's approximation to the Gamma function, it can be shown that as $N\to \infty$, for any $h\in (-\delta_0, \delta_0)$, $\mathrm{E}\{\exp(h\times T_0)\}$ converges to $(1-2h)^{-f_0/2}$, which is the moment generating function of $\chi_{f_0}^2$; see, e.g., \citet{bartlett1950tests} and  Section 5.1.2 of \citet{muirhead2009aspects}. It follows that $T_0 \xrightarrow{D} \chi^2_{f_0}$ by  the continuity theorem.  
When $p\to \infty$, \citet{jiang2013} and \citet{jiang2015likelihood}  derived an approximate expansion of the multivariate Gamma function $\Gamma_p(\cdot)$  when $p$  increases with the sample size $N$, and then showed that for any $h\in (-\delta_0, \delta_0)$, 
\begin{eqnarray}
	\mathrm{E}[\exp\{ h ( T_0 + n\mu_{n,0} )/( n \sigma_{n,0} ) \} ] \to \exp(h^2/2), \label{eq:normallimit1}
\end{eqnarray} 
where $\delta_0$ is a constant that is sufficiently small, $\exp(h^2/2)$ is the moment generating function of the standard normal random variable $\mathcal{N}(0,1)$, and 
\begin{eqnarray*}
\mu_{n,0} =(p-n+1/2)\log \left(1-\frac{p}{n} \right) - \frac{n-1}{n}p, \quad  	\sigma_{n,0}^2 = - 2\left\{\frac{p}{n}+\log \left(1-\frac{p}{n} \right) \right\}. 
\end{eqnarray*}
This suggests $ ( T_0 + n\mu_{n,0})/( n \sigma_{n,0} ) \xrightarrow{D} \mathcal{N}(0,1)$ by  the continuity theorem.  
Note that $\chi_{f_0}^2$ can be viewed as a summation of the squares of $f_0$ independent standard normal random variables, and  $f_0\to \infty$ when $p\to \infty$. By  applying the central limit theorem to $\chi_{f_0}^2$ when $p\to \infty$, we obtain $(\chi_{f_0}^2 - f_0)/\sqrt{2f_0}\xrightarrow{D} \mathcal{N}(0,1)$, giving $\mathrm{E}[\exp\{h( \chi_{f_0}^2 - f_0 )/ \sqrt{2f_0} \}] \to \exp(h^2/2)$.
Therefore, if the chi-square approximation for $T_0$ holds, we know $\mathrm{E}[\exp\{h( T_0 - f_0 )/ \sqrt{2f_0} \}] \to \exp(h^2/2)$ for $h\in (-\delta_0, \delta_0)$, which, given \eqref{eq:normallimit1}, is equivalent to 
\begin{eqnarray}
	\sqrt{2f_0} \times ( n\sigma_{n,0} )^{-1} \to 1, \label{eq:chisqvar1} \\
	( f_0 + n\mu_{n,0} )\times( n \sigma_{n,0} )^{-1} \to 0. \label{eq:chisqmean1}
\end{eqnarray}
We next examine \eqref{eq:chisqvar1} and \eqref{eq:chisqmean1} by discussing two cases $\lim_{n \to \infty}p/n = 0$ and $\lim_{n \to \infty}p/n = C \in (0,1]$, respectively.

\noindent \textit{Case (i.1): $\lim_{n \to \infty}p/n = 0$.} \  
Under this case, we show that \eqref{eq:chisqvar1} holds. By Taylor's expansion, $\log(1-x)= - x - x^2/2 + O(x^3)$ for $x\in (0,1)$, and then 
\begin{eqnarray}
	\sigma_{n,0}^2=-2\left\{ -\frac{p^2}{2n^2} + O\left(\frac{p^3}{n^3} \right) \right\} = \frac{p^2}{n^2}\{1+o(1)\}. \label{eq:sigmanexp}
\end{eqnarray}
Recall that $f_0=p(p-1)/2$, and it  follows that  \eqref{eq:chisqvar1} holds. Next we prove \eqref{eq:chisqmean1} holds if and only if $p/n^{1/2}\to 0.$ Similarly by Taylor's expansion and $p/n=o(1)$, we have
\begin{eqnarray*}
	\mu_{n,0}&=&(-n+p+1/2)\left\{-\sum_{k=1}^3\frac{1}{k}\left(\frac{p}{n} \right)^k + O\left(\frac{p^4}{n^4} \right) \right\} - \frac{n-1}{n}p \notag \\
	&=& p+\frac{p^2}{2n}+\frac{p^3}{3n^2}-\frac{p(p+1/2)}{n}-\frac{p^2(p+1/2)}{2n^2} + O\left(\frac{p^4}{n^3} \right) -p+ \frac{p}{n} \notag \\
	&=& \frac{p}{2n}-\frac{p^2}{2n}-\frac{p^3}{6n^2} + O\left(\frac{p^4}{n^3} \right)+o\left(\frac{p}{n}\right),
\end{eqnarray*} 
and then $f_0+n\mu_{n,0}=-p^3/(6n)+O(p^4/n^2) + o(p)$. 
 Given that \eqref{eq:chisqvar1} holds under this case and $\sqrt{2f_0}/p\to 1$, we obtain  $	( f_0 + n \mu_{n,0} )\times ( n\sigma_{n,0} )^{-1}= - p^2/(6n) + O(p^3/n^2)+o(1)$,
 which converges to 0 if and only if $p^2/n\to 0$ under this case.   

\smallskip

\noindent \textit{Case (i.2): $\lim_{n \to \infty}p/n = C \in (0,1]$.} \  Under this case, we show that \eqref{eq:chisqvar1} does not hold. Note that 
\begin{eqnarray*}
	\frac{2f_0 }{ n^2\sigma_{n,0}^2} \to \frac{C^2}{-2\{C + \log(1-C)\} }.
\end{eqnarray*}
If $C=1$, $2f_0/(n^2\sigma_{n,0}^2)\to 0$, and thus \eqref{eq:chisqvar1} does not hold. We next consider $C\in (0,1).$ 
If \eqref{eq:chisqvar1} holds, we shall  have  $g_1(C)=0$ with $g_1(C)=C^2 + 2\{C+\log(1-C)\}$. By taking derivative of $g_1(C),$ we obtain
\begin{eqnarray*}
	g'_1(C)=2C+2 -\frac{2}{1-C}=-\frac{2C^2}{1-C} < 0
\end{eqnarray*}when $C\in (0,1).$ 
This suggests that $g_1(C)$ is strictly decreasing on $C\in (0,1)$. As $g_1(0)=0$, we know $g_1(C)<0$ for $C\in (0,1)$, and thus  \eqref{eq:chisqvar1} does not hold. 

\smallskip

Finally, we consider a general sequence $p/n \in (0,1]$, and write $p_n=p$ and $f_{n,0}=f_0$ below to emphasize that $p$ and $f_0$ change with $n$. For the bounded sequence $\{p_n/n\}$, by the Bolzano-Weierstrass theorem, we can further take a subsequence $\{p_{n_k}/n_k\}$ such that $p_{n_k}/n_k \to C \in [0,1].$ If $C\in (0,1]$, the analysis in Case (i.2)  applies, and we know 
$\sqrt{2f_{n_k,0}} \times (n_k\sigma_{n_k,0} )^{-1}$
does not converge to 1. 
 Since a sequence converges if and only if every subsequence converges, we know \eqref{eq:chisqvar1} does not converge to 1 under this case. Alternatively, if all the subsequences of $\{p/n\}$ converge to 0, we know $p/n \to 0,$ and the analysis in Case (i.1) applies. In summary, the chi-square approximation holds if and only if $p^2/n\to 0$.




\paragraph{Part (ii) The chi-square approximation with the Bartlett correction.}
Similarly to the proof of Part (i), when $p$ is fixed, it has been shown that  $\mathrm{E}\{\exp(h\times \rho_0\times T_0)\}\to (1-2h)^{-f_0/2}$ for $h\in (-\delta_0,\delta_0)$ and
$\rho_0=1-(2p+5)/(6n)$
\cite[see, e.g.,][]{bartlett1950tests}; when $p\to \infty$, we also have \eqref{eq:normallimit1} holds. If the chi-square approximation with the Bartlett correction holds, $\mathrm{E}[\exp\{h( \rho_0 T_0 - f_0 )/ \sqrt{2f_0} \}] \to \exp(h^2/2)$ for $h\in (-\delta_0, \delta_0)$, which, given \eqref{eq:normallimit1}, is equivalent to 
\begin{eqnarray}
	\sqrt{2f_0} \times (n\rho_0 \times \sigma_{n,0})^{-1} \to 1, \label{eq:chisqvarbar1} \\
	( f_0 + n\rho_0\times \mu_{n,0} )\times(n\rho_0 \times \sigma_{n,0} )^{-1} \to 0. \label{eq:chisqmeanbar1}
\end{eqnarray}


\noindent \textit{Case (ii.1): $\lim_{n \to \infty}p/n = 0$.} \  Under this case, we have \eqref{eq:chisqvarbar1} holds given $\rho_0\to 1$ and \eqref{eq:chisqvar1} proved above. We next prove \eqref{eq:chisqmeanbar1} holds if and only if $p^3/n^2\to 0.$ Similarly to the proof in Case (i.1), by Taylor's expansion and $p/n=o(1)$, we have
\begin{eqnarray*}
	\mu_{n,0}&=&(-n+p+1/2)\left\{-\sum_{k=1}^4\frac{1}{k}\left(\frac{p}{n} \right)^k + O\left(\frac{p^5}{n^5} \right) \right\} - \frac{n-1}{n}p \notag \\
	&=& p+\frac{p^2}{2n}+\frac{p^3}{3n^2} + \frac{p^4}{4n^3}
	-\frac{p(p+1/2)}{n}-\frac{p^2(p+1/2)}{2n^2} \notag \\
	&&-\frac{p^3(p+1/2)}{3n^3} + O\left(\frac{p^5}{n^4} \right) -p+ \frac{p}{n} \notag \\
	&=& \frac{p}{2n}-\frac{p^2}{2n}-\frac{p^3}{6n^2}-\frac{p^4}{12n^3} + O\left(\frac{p^5}{n^4} \right)+o\left(\frac{p}{n}\right).
\end{eqnarray*} 
By $n\rho_0=n -(2p+5)/6$, we obtain
\begin{align*}
f_0+n\rho_0\times \mu_{n,0}=&~ f_0 + n\times \mu_{n,0} -p\times \mu_0/3 + o(p) \notag \\
=&~ f_0 +  \frac{p-p^2}{2}-\frac{p^3}{6n}-\frac{p^4}{12n^2} + O\left(\frac{p^5}{n^3} \right)+o(p)	+\frac{p^3}{6n} + \frac{p^4}{18n^2}     \notag \\
=&~ -\frac{p^4}{36n^2}+ O\left(\frac{p^5}{n^3} \right)+o(p).
\end{align*}

Given that \eqref{eq:chisqvarbar1} holds under this case and $\sqrt{2f_0}/p\to 1$, 
we obtain 
 $(f_0 + n\rho_0\mu_{n,0})\times(n\rho_0\sigma_{n,0} )^{-1}= - p^3/(36n^2) + O(p^4/n^3)+o(1)$,
 which converges to 0 if and only if $p^3/n^2\to 0$ under this case. 


\smallskip

\noindent \textit{Case (ii.2): $\lim_{n \to \infty}p/n = C \in (0,1]$.} \ 
Under this case, we show that \eqref{eq:chisqvarbar1} does not hold. Note that 
\begin{eqnarray}
	\frac{2f_0 }{ n^2\rho_0^2 \sigma_{n,0}^2} \to \frac{C^2}{-2(1-C/3)^2\{C + \log(1-C)\} }.\label{eq:climit1}
\end{eqnarray}
If $C=1,$ $2f_0/(n^2\rho_0^2\sigma_{n,0}^2)\to 0$ and thus \eqref{eq:chisqvarbar1} does not hold.  
We next consider $C\in (0,1).$
If \eqref{eq:chisqvarbar1} holds, we shall   have  $g_2(C)= 0$ with $g_2(C)=C^2 + 2(1-C/3)^2\{C+\log(1-C)\}$. By taking derivative of $g_2(C),$ we obtain $g'_2(0)=0,$ $g''_2(0)=0$, and
\begin{eqnarray*}
g_2'''(C)&=&	 - \frac{4C(3C^2-8C+9)}{9(1-C)^3} < 0
\end{eqnarray*} when $C\in (0,1)$. Similarly to the analysis in Case (i.1), we obtain that $g_2'(C)<0$ for $C\in (0,1)$. It follows that $g_2(C)$ is strictly  decreasing on $C\in (0,1)$ with $g_2(0)=0$. Therefore $g_2(C)<0$ on  $C\in (0,1)$,  which suggests that   \eqref{eq:chisqvarbar1} does not hold. 


Finally, for a general sequence $p/n \in (0,1]$, following the analysis of taking subsequences in Part (i),  we know that the chi-square approximation with the Bartlett correction holds if and only if $p^3/n^2 \to 0$. Recall that $N=n+1$. Thus, the same conclusions hold asymptotically by replacing $n$ with $N$, that is, the chi-square approximations without and with the Bartlett correction hold if and only if $p^2/N\to 0$ and $p^3/N^2$, respectively.


\subsection{Proof of Theorem \ref{thm2}}\label{sec:pfthm2}

Similarly to the proof of Theorem \ref{thm1}, we next examine the limiting distribution of $T'$ based on its moment generating function.  
In Theorem \ref{thm2}, testing $H_{0,k}': \Sigma = \Lambda_k \Lambda_k^{\top}+ \Psi_k$ when $\Lambda_k$ and $ \Psi_k$ are given is equivalent to testing the null hypothesis $H_0: \Sigma = \mathrm{I}_p$ by applying the data transformation $\Sigma_k^{-1/2} X_i $ with $\Sigma_k= \Lambda_k \Lambda_k^{\top}+ \Psi_k.$ 
Then by Corollary 8.4.8 in \citet{muirhead2009aspects}, under the null hypothesis, we have
\begin{eqnarray}
\mathrm{E}\{\exp(h \times T')\}= 	\left(\frac{2e}{n}\right)^{-pnh}(1-2h)^{-pn(1-2h)/2}\times \frac{\Gamma_p \{n(1-2h)/2\}}{\Gamma_p(n/2)}, \label{eq:momentgencov}
\end{eqnarray} where $n=N-1$.  



\paragraph{Part (i) The chi-square approximation.} When $p$ is fixed compared to the sample size $N$, by applying Stirling's approximation to the Gamma function, it has been shown that as $N\to \infty$, \eqref{eq:momentgencov} converges to $(1-2h)^{-f'/2}$, which is the moment generating function of $\chi^2_{f'}$ \cite[][Section 8.4.4]{muirhead2009aspects}, and therefore  $T' \xrightarrow{D} \chi^2_{f'}$. 
When $p\to \infty$, by the proof of Lemma \ref{lm:1} in Section \ref{sec:lemma}, we have $ \mathrm{E}[\exp\{ h ( T' + n\mu_{n} )/( n \sigma_{n} ) \} ] \to \exp(h^2/2)$, where 
\begin{align}
	\mu_n =  -p + (p-n+1/2) \log \left(1-\frac{p}{n}\right), \quad  
	\sigma_n^2 =-2\left\{\frac{p}{n}+\log \left(1-\frac{p}{n}\right)\right\}. \label{eq:lmmusigma}
\end{align} 
Similarly to the proof of Theorem \ref{thm1}, we know that the chi-square approximation for $T'$ holds if and only if
\begin{eqnarray}
	\sqrt{2f'} \times ( n\sigma_{n} )^{-1} \to 1, \label{eq:chisqvar2} \\
	( f' + n\mu_{n} )\times( n \sigma_{n} )^{-1} \to 0. \label{eq:chisqmean2}
\end{eqnarray}


\noindent \textit{Case (i.1): $\lim_{n \to \infty}p/n = 0$.} \  Under this case, similar to  \eqref{eq:sigmanexp}, by Taylor's expansion, $\sigma_n^2=p^2n^{-2}\{1+o(1)\}$.   
As $\sqrt{2f'}/p\to 1$, we have \eqref{eq:chisqvar2} holds. We next show that \eqref{eq:chisqmean2} holds if and only if $p^2/n\to 0.$ Particularly, by Taylor's expansion and $p/n\to 0$,
\begin{eqnarray}
	\mu_n &=&  -p + (-n+p+1/2) \left\{ -\frac{p}{n}-\frac{p^2}{2n^2}-\frac{p^3}{3n^3} + O\left(\frac{p^4}{n^4} \right)\right\} \label{eq:munexpan2} \\
	&=&-p+p+\frac{p^2}{2n}+\frac{p^3}{3n^2}-\frac{p^2}{n}-\frac{p^3}{2n^2} - \frac{p}{2n}  + O\left(\frac{p^4}{n^3} \right) + o \left( \frac{p}{n}\right). \notag
\end{eqnarray}
It follows that $f'+n\mu_n=-p^3/(6n^2)+O(p^4/n^{3})+o(p/n)$. Given \eqref{eq:chisqvar2} and $\sqrt{2f'}/p\to 1$,  \eqref{eq:chisqmean2} holds if and only if $p^2/n \to 0.$


\noindent \textit{Case (i.2): $\lim_{n \to \infty}p/n = C \in (0,1]$.} \ Under this case,  $2f'/(n^2\sigma_n^2) \to -C^2/[2\{C+\log(1-C)\}]$.  We then know \eqref{eq:chisqvar2} does not hold following the proof of Theorem \ref{thm1}, and therefore the chi-square approximation fails.

Finally, for a general sequence $p/n \in (0,1]$, following the same analysis of taking subsequences as in the proof of Theorem \ref{thm1},  we know that the chi-square approximation holds if and only if $p^2/n \to 0$. 

\paragraph{Part (ii) The chi-square approximation with the Bartlett correction.}
Similarly to the proof of Theorem \ref{thm1} and the analysis above, we know that the chi-square approximation with the Bartlett correction holds if and only if
\begin{eqnarray}
	\sqrt{2f'} \times (n\rho' \times \sigma_{n})^{-1} \to 1, \label{eq:chisqvarbar2} \\
	( f' + n\rho' \times \mu_{n} )\times(n\rho' \times \sigma_{n} )^{-1} \to 0. \label{eq:chisqmeanbar2}
\end{eqnarray}

\noindent \textit{Case (ii.1): $\lim_{n \to \infty}p/n = 0$.} \  As $\rho'\to 1$ under this case, we know \eqref{eq:chisqvarbar2} holds given \eqref{eq:chisqvar2} proved in Part (i). We next prove \eqref{eq:chisqmeanbar2} holds if and only if $p^3/n^2 \to 0.$ Similarly to \eqref{eq:munexpan2}, by Taylor's expansion and $p/n\to 0$,
\begin{eqnarray*}
	\mu_n&=&  -p + (-n+p+1/2) \left\{ -\sum_{j=1}^4 \frac{p^j}{j\times n^j} + O\left(\frac{p^5}{n^5} \right)\right\} \notag \\
	&=&-\frac{p(p+1)}{2n}-\frac{p^3}{6n^2}-\frac{p^4}{12n^3} + O\left( \frac{p^5}{n^4}\right) +o\left(\frac{p}{n} \right). 
\end{eqnarray*}
By $n\rho'=n-p/3+O(1)$ and $p/n\to 0,$  
\begin{align*}
n\rho'\mu_n =&~ \{n-p/3+O(1)\}\left\{-\frac{p(p+1)}{2n}-\frac{p^3}{6n^2}-\frac{p^4}{12n^3} + O\left( \frac{p^5}{n^4}\right) +o\left(\frac{p}{n} \right) \right\} + o(p)
\notag \\
=&~ -\frac{p(p+1)}{2}-\frac{p^3}{6n}-\frac{p^4}{12n^2} +\frac{p^3}{6n}+\frac{p^4}{18n^2} +O\left( \frac{p^5}{n^3}\right) + o(p).
\end{align*}
It follows that $f'+n\rho'\mu_n = - p^4/(36n^2)+O(p^4/n^3)+o(p)$. Given \eqref{eq:chisqmeanbar1} and $\sqrt{2f'}/p\to 1$, \eqref{eq:chisqmeanbar2} holds if and only if $p^3/n^2 \to 0.$ 

\noindent \textit{Case (ii.2): $\lim_{n \to \infty}p/n = C \in (0,1]$.} \ Under this case, $\rho'\to 1-C/3$ and $2f'/(n\rho'\sigma_n)^2$ converges to the limit same as the right hand side of \eqref{eq:climit1}. Thus the same analysis applies and we know that the chi-square approximation with the Bartlett correction fails. 

Finally, for a general sequence $p/n \in (0,1]$, following the same analysis of taking subsequences as in the proof of Theorem \ref{thm1},  we know that the chi-square approximation with the Bartlett correction holds if and only if $p^3/n^2 \to 0$. 
Recall that $N=n+1$. Thus, the same conclusions hold asymptotically by replacing $n$ with $N$, that is, the chi-square approximations without and with the Bartlett correction hold if and only if $p^2/N\to 0$ and $p^3/N^2$, respectively. 
 


\subsection{Lemma} \label{sec:lemma}
\begin{lemma}\label{lm:1}
Under the conditions of Theorem \ref{thm2}, when $p\to \infty$ as $n = N-1\to \infty$, we have 
$(T' + n\mu_n)/(n\sigma_n) \xrightarrow{D} \mathcal{N}(0,1)$ with $\mu_n$ and $\sigma_n^2$ in  \eqref{eq:lmmusigma}.
\end{lemma}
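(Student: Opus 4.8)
The plan is to establish the normal limit through the moment generating function, mirroring the argument already used for $T_0$ in the proof of Theorem \ref{thm1}. Writing $Z_n = (T' + n\mu_n)/(n\sigma_n)$ and $h' = h/(n\sigma_n)$, the identity
\[
\mathrm{E}\{\exp(h Z_n)\} = \exp\left(\frac{h\mu_n}{\sigma_n}\right)\mathrm{E}\left\{\exp\left(\frac{h}{n\sigma_n}T'\right)\right\}
\]
holds for $h$ in a small neighborhood of the origin, where the second factor is obtained by substituting $h \mapsto h'$ into the closed-form expression \eqref{eq:momentgencov}. The goal is to show that the right-hand side converges to $\exp(h^2/2)$, the moment generating function of $\mathcal{N}(0,1)$, after which Lemma \ref{lm:1} follows from the continuity theorem.

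First I would take logarithms and apply the factorization $\Gamma_p(z) = \pi^{p(p-1)/4}\prod_{j=1}^p \Gamma\{z - (j-1)/2\}$, so that the $\pi^{p(p-1)/4}$ factors cancel and the ratio $\Gamma_p\{n(1-2h')/2\}/\Gamma_p(n/2)$ reduces to
\[
\sum_{j=1}^p\left[\log\Gamma\left\{\frac{n(1-2h')}{2}-\frac{j-1}{2}\right\}-\log\Gamma\left\{\frac{n}{2}-\frac{j-1}{2}\right\}\right].
\]
The remaining explicit contributions, namely $-pnh'\log(2e/n)$ from the prefactor and $-\{pn(1-2h')/2\}\log(1-2h')$, are elementary to expand in $h'$.

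The core technical step, and the main obstacle, is a uniform asymptotic expansion of this log-Gamma sum as both $p$ and $n$ diverge with $p \le n-1$, the latter guaranteed by the hypothesis $N \ge p+2$ (equivalently $n = N-1 \ge p+1$). I would invoke the Stirling-type expansion of the multivariate Gamma function developed by \citet{jiang2013} and \citet{jiang2015likelihood}, which applies in exactly this regime and provides error control that is uniform over $1 \le j \le p$. The delicate point is that the smallest Gamma argument $n/2 - (p-1)/2 = (n-p+1)/2$ can fail to grow; the condition $N \ge p+2$ is precisely what keeps this argument bounded below by $1$, so that each log-Gamma term stays in the regime where the expansion is valid. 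Because $n\sigma_n \to \infty$ in every case (one checks $\sigma_n^2 \asymp p^2/n^2$ when $p/n \to 0$ and $\sigma_n^2$ bounded below otherwise), one has $h' = h/(n\sigma_n) \to 0$, which legitimizes the Taylor expansion of each log-Gamma difference in powers of $h'$ and renders the cubic and higher-order contributions asymptotically negligible after summation.

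Finally, I would collect the leading terms in $h$. The first-order-in-$h$ contributions from the log-Gamma sum and from the elementary factors must cancel against the centering term $h\mu_n/\sigma_n$, which is exactly the cancellation that the definition of $\mu_n$ in \eqref{eq:lmmusigma} arranges; the second-order-in-$h$ contributions must combine to $h^2/2$, which is guaranteed by the definition of $\sigma_n^2$. Verifying these two cancellations reduces to Taylor expansions of the same type carried out in the proof of Theorem \ref{thm1}, yielding $\mathrm{E}\{\exp(h Z_n)\} \to \exp(h^2/2)$ and hence $Z_n \xrightarrow{D} \mathcal{N}(0,1)$.
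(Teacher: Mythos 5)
Your proposal is correct and follows essentially the same route as the paper: compute $\log \mathrm{E}\{\exp(sT')\}$ from \eqref{eq:momentgencov} with $s=h/(n\sigma_n)$, expand the multivariate Gamma ratio via the Stirling-type results of \citet{jiang2013} and \citet{jiang2015likelihood}, and verify that the linear and quadratic terms in $h$ match $-h\mu_n/\sigma_n$ and $h^2/2$. The only detail the paper makes explicit that your plan glosses over is that the two regimes require two different expansions with different validity conditions --- Lemma 5.4 of \citet{jiang2013} when $p/n\to C\in(0,1]$ (checked via $-ns=O(1/r_n)$ with $r_n^2=-\log(1-p/n)$) and Proposition A.1 of \citet{jiang2015likelihood} when $p/n\to 0$ (checked via $-ns=O(n/p)$) --- followed by a Bolzano--Weierstrass subsequence argument to cover a general sequence $p/n$.
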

\begin{proof}
It suffices to show that there exists a constant $\delta'>0$ such that $ \mathrm{E}[\exp\{ h ( T' + n\mu_{n} )/( n \sigma_{n} ) \} ] \to \exp(h^2/2)$ for all $|h|<\delta'$. Particularly, we let $s=h/(n\sigma_n)$, and prove $\log [\mathrm{E}\{\exp(sT')\}]\to h^2/2 - h \mu_n /\sigma_n$. By the moment generating function of $T'$ in \eqref{eq:momentgencov}, we have 
\begin{eqnarray}
&& \log \left[ \mathrm{E}\{\exp(s \times T')\} \right]\label{eq:momentexp2limit} \\
&=& -pns\log(2e/n)-\frac{pn}{2}(1-2s)\log(1-2s)+\log\left\{ \frac{\Gamma_p (n/2 - ns)}{\Gamma_p(n/2)} \right\}. \notag
\end{eqnarray}
We next derive the approximate expansion of \eqref{eq:momentexp2limit} by discussing two cases. 



\noindent \textit{Case 1: $\lim p/n \to C \in (0,1]$.}\quad   Under this case, we utilize the approximate expansion of multivariate gamma function in Lemma 5.4 of \citet{jiang2013}. 
To apply the result, 
we first show that the conditions are satisfied. Specifically, define $r_n^2=- \log (1-p/n) $, and we have 
\begin{eqnarray*}
	(-ns)^2\times r_n^2= - \frac{h^2}{\sigma_n^2} \log (1-p/n) \to  
\begin{cases}
  \displaystyle \frac{h^2}{2} \times \frac{\log(1-C)}{C+\log(1-C)}, & \text{if } C \in (0,1);  \\[10pt]
\displaystyle      \frac{h^2}{2}, & \text{if } C = 0.
\end{cases}   
\end{eqnarray*}Therefore, $-ns=O(1/r_n)$, and then Lemma 5.4 in \citet{jiang2013} can be applied to expand \eqref{eq:momentexp2limit}.
It follows that 
\begin{eqnarray*}
	\eqref{eq:momentexp2limit} &=&  -pns\log(2e/n)-\frac{pn}{2}(1-2s)\log(1-2s) \notag \\
	&& -pns\log\{n/(2e)\}+r_n^2\left\{ (-ns)^2 -(p-n+1/2)(-ns) \right\} + o(1). 
\end{eqnarray*}
By Taylor's expansion $(1-2s)\log(1-2s)=-2s+2s^2+O(s^3)$ for $s\in (0,1)$, we obtain
\begin{eqnarray*}
\eqref{eq:momentexp2limit} &=& -\frac{pn}{2}	\left\{ -2s+2s^2 + O(s^3)\right\} \notag \\
&& -\log\left(1-\frac{p}{n}\right)\left\{ n^2s^2+(p-n+1/2) ns \right\} + o(1) \notag \\
&=& s^2 \left\{ -pn-n^2\log\left(1-\frac{p}{n}\right) \right\} + s\left\{ pn-(p-n+1/2)\log\left( 1-\frac{p}{n}\right) \right\} + o(1).
\end{eqnarray*}
With $s=h/(n\sigma_n)$, we have  $\log(\mathrm{E}[\exp\{ hT'/(n\sigma_n)\}])= h^2/2 - h \mu_n /\sigma_n  + o(1)$. 

\medskip

\noindent \textit{Case 2: $\lim p/n = 0$.}\quad Under this case, we utilize the approximate expansion of multivariate gamma function in Proposition A.1 of \citet{jiang2015likelihood}. 
To apply the result, 
we first show that the conditions are satisfied. Particularly, as $\sigma_n^2=p^2n^{-2}\{1+o(1)\}$, we have $-ns \times p / n = -ph(n\sigma_n)^{-1}= h \{ 1+ o(1)\}$. Therefore, $-ns=O(n/p)$, and we can apply  Proposition A.1 in \citet{jiang2015likelihood} to expand \eqref{eq:momentexp2limit}. It follows that 
\begin{eqnarray*}
	\log\left\{ \frac{\Gamma_p (n/2 - ns)}{\Gamma_p(n/2)} \right\}=\gamma_{n,1}(-ns)+\gamma_{n,2}(-ns)^2+ \gamma_{n,3}+o(1),
\end{eqnarray*} where 
\begin{align*}
	\gamma_{n,1}=&~-\left\{2p+(n-p-1/2)\log\left(1-{p}/{n} \right) \right\}, \notag \\
	\gamma_{n,2}=&~ -\left\{{p}/{n}+\log\left(1-{p}/{n} \right) \right\},\notag \\
	\gamma_{n,3}=&~ p\left\{\left({n}/{2}-ns\right)\log\left({n}/{2}-ns\right)-(n/2)\log\left(n/2\right) \right\}. \notag 
\end{align*}
Note that $	\gamma_{n,3}=({pn}/{2})(1-2s)\log(1-2s)-pns\log(n/2).$ Then we have
\begin{align*}
 \eqref{eq:momentexp2limit}
=&~-pns\log\left(\frac{2e}{n}\right)-\frac{pn}{2}(1-2s)\log(1-2s) -\gamma_{n,1}ns+\gamma_{n,2}n^2s^2+\gamma_{n,3}+o(1) \notag \\
=&~ -(p+\gamma_{n,1})ns+\gamma_{n,2}n^2s^2+o(1),  \notag 
\end{align*}
which gives $\log(\mathrm{E}[\exp\{ hT'/(n\sigma_n)\}])=h^2/2+\mu_nh/\sigma_n+o(1)$ by $s=h/(n\sigma_n)$.


\smallskip

Finally, for a general sequence $\{p/n\}$, to prove that $(T'+n\mu_n)/(n\sigma_n)$ converges in distribution to $\mathcal{N}(0,1)$, 
it suffices to show that every subsequence has a further subsequence that converges in distribution to $\mathcal{N}(0,1)$. By the boundedness of $p/n$ and the  Bolzano-Weierstrass theorem, we can further take a subsequence such that $p/n$ has a limit and the arguments above can be applied. In summary, Lemma \ref{lm:1} is proved. 
\end{proof}


\section{Supplementary simulation studies}  \label{sec:appendixsimu}


\subsection{Simulations on the Type \RNum{1} Error}\label{sec:addsimu}
In this section, we provide additional simulation studies when the data is not normally  distributed. Particularly, we focus on the likelihood ratio test under the null hypothesis $H_{0,0}$, which detects the existence of any factors or not. 

\paragraph{Simulations with heavy-tailed $\boldsymbol{t}$-distributed data.}

Similarly to previous simulations, 
we consider  $p = \lfloor{N^{\varepsilon}}\rfloor$, where $N \in \{100, 500, 1000, 2000\}$ and $\varepsilon \in \{3/24,4/24, \ldots ,23/24\}$.
Under each combination of $(N,p)$, we generate the entries of data matrix $X_i$ as independent and identical random variables following $t_{d_0}$ distribution, where $d_0$ denotes the degrees of freedom and we take $d_0\in \{5,10\}.$
Then we conduct the likelihood ratio test for $H_{0,0}$ with approximations \eqref{eq1} and \eqref{eq2}. We repeat the procedure 1000 times, and estimate the type \RNum{1} error rates with significance level 0.05. 
We present the results of $t_5$ and $t_{10}$ distributed data in  Figures \ref{fig:typeierrort} and \ref{fig:typeierrortdf10}, respectively. In each figure, we draw the estimated type \RNum{1} error rates versus $\varepsilon$ values for approximations \eqref{eq1} and \eqref{eq2} in the left and right plots, respectively. 
Similarly to Numerical Example \ref{numex:2}, we can see that the chi-square approximation for $T_0$ starts to fail when  $\varepsilon$ approaches $1/2$, and the  chi-square approximation for $\rho_0T_0$ starts to fail when  $\varepsilon$ approaches $2/3$.



\begin{figure}[!ht]
\captionsetup[subfigure]{labelformat=empty}
\centering
\begin{subfigure}[b]{0.4\textwidth}
\centering
\caption{\quad \quad  \small{Approximation \eqref{eq1} for $T_0$}}
\includegraphics[width=\textwidth]{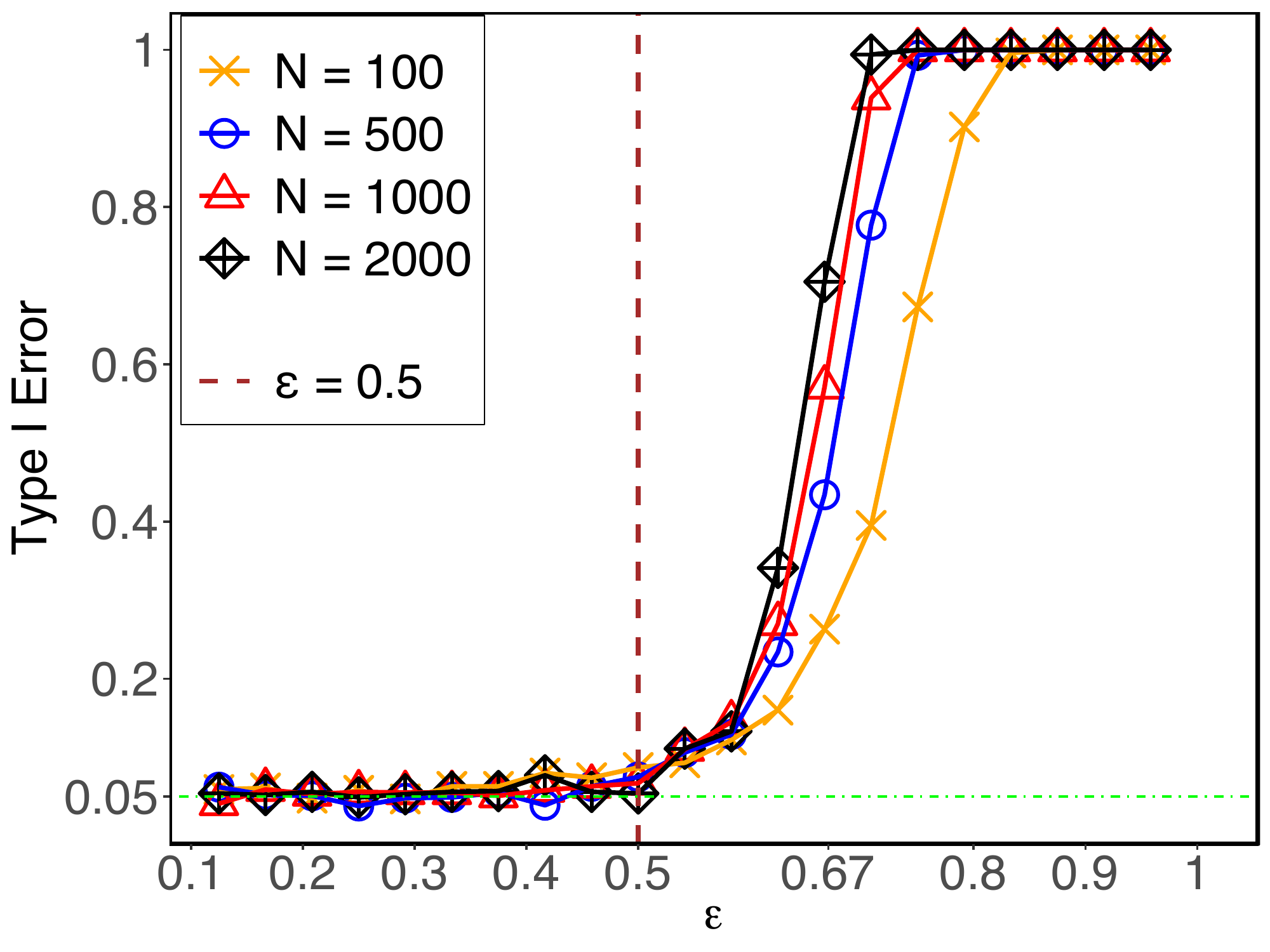}	
\end{subfigure}\quad \quad  
\begin{subfigure}[b]{0.4\textwidth}
\centering
\caption{\quad \quad \small{Approximation \eqref{eq2} for $\rho_0T_0$}}
\includegraphics[width=\textwidth]{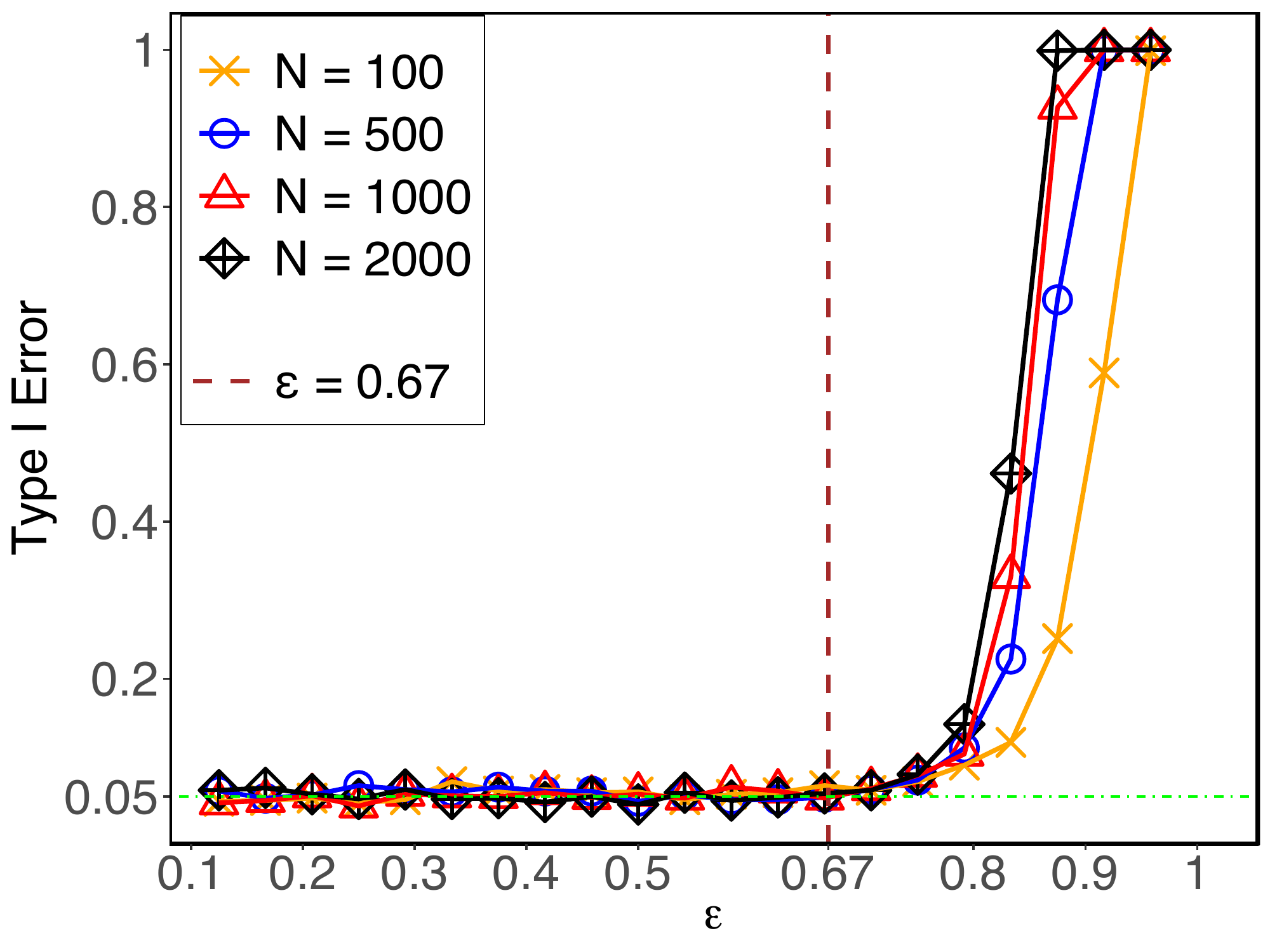}	
\end{subfigure}
\caption{Estimated type \RNum{1} error versus $\varepsilon$ of $t_5$-distributed data}\label{fig:typeierrort}
\end{figure}

\begin{figure}[!ht]
\captionsetup[subfigure]{labelformat=empty}
\centering
\begin{subfigure}[b]{0.4\textwidth}
\centering
\caption{\quad \quad  \small{Approximation \eqref{eq1} for $T_0$}}
\includegraphics[width=\textwidth]{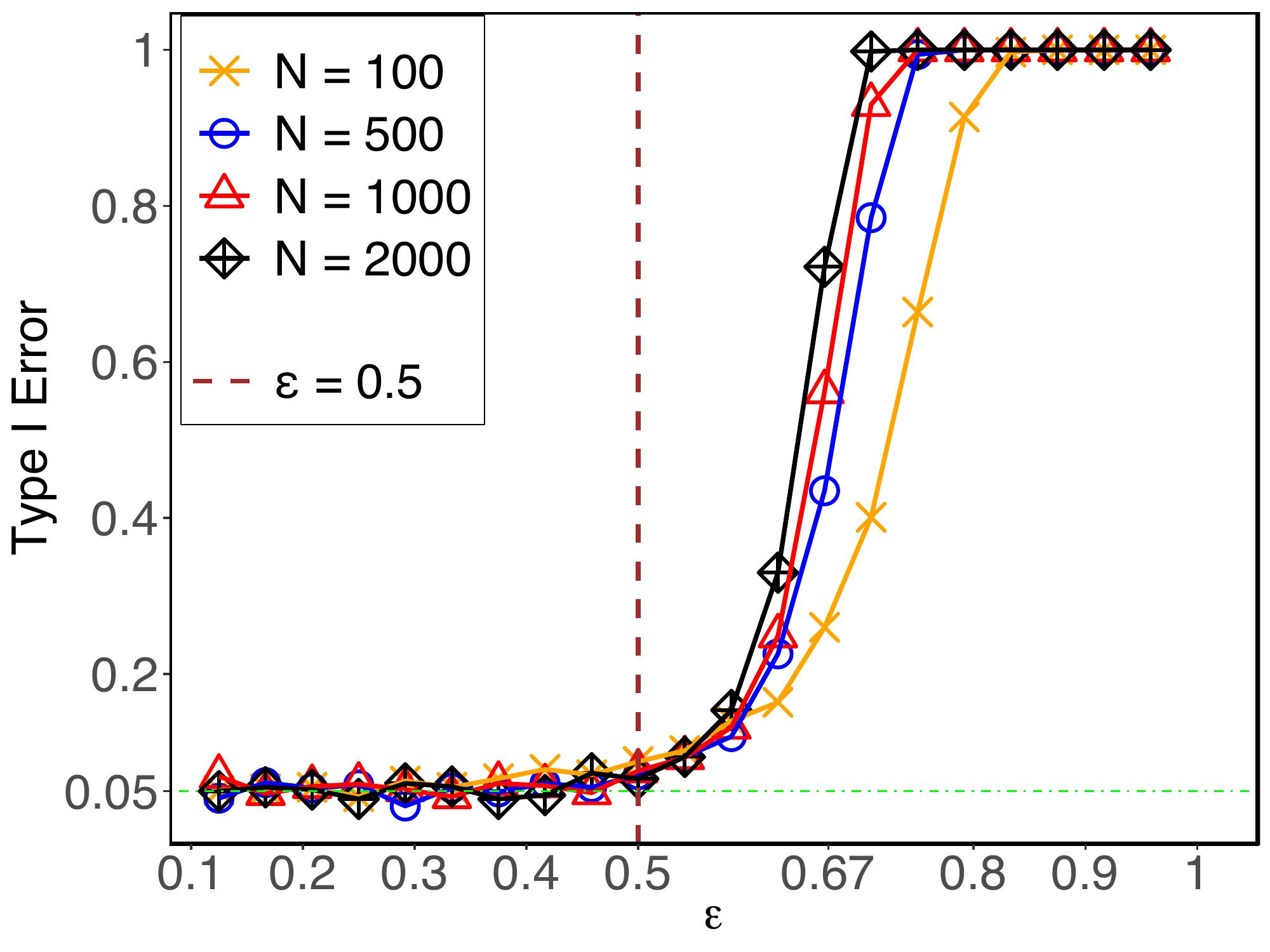}	
\end{subfigure}\quad \quad  
\begin{subfigure}[b]{0.4\textwidth}
\centering
\caption{\quad \quad \small{Approximation \eqref{eq2} for $\rho_0T_0$}}
\includegraphics[width=\textwidth]{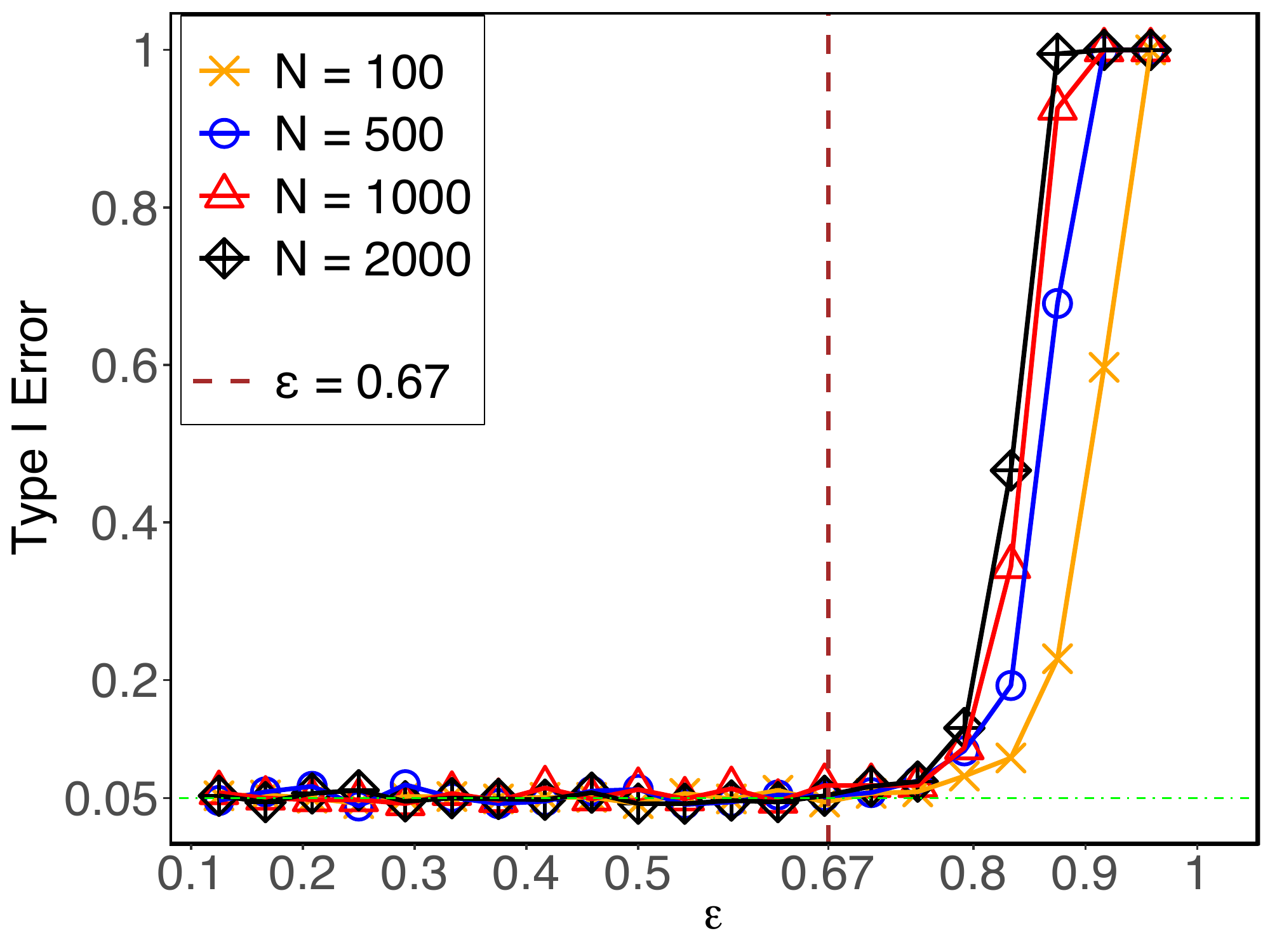}	
\end{subfigure}
\caption{Estimated type \RNum{1} error versus $\varepsilon$ of $t_{10}$-distributed data}\label{fig:typeierrortdf10}
\end{figure}

%



\paragraph{Simulations with discrete multinomial data.}
The simulations are conducted same as above, except that we generate the entries in the data matrix $X_i$ from a discrete multinomial distribution. 
Specifically, for each entry $x_{i,j}$ within the matrix $X_i$, where $i=1,\ldots, N$ and $j=1,\ldots, p$, we first sample $z_{i,j}\sim \mathcal{N}(0,1)$, and then set discrete value of $x_{i,j}$ according to the range of $z_{i,j}$ considering  three settings (I)--(III) in Table \ref{table:discretsetting}. The results under settings (I)--(III) are given in Figures \ref{fig:typeierrormulti}--\ref{fig:typeierrormulti_case3}, respectively. 	Similarly to Numerical Example \ref{numex:2}, under each setting, we observe that the chi-square approximation \eqref{eq1} for $T_0$ starts to fail when  $\varepsilon$ approaches $1/2$, and the  chi-square approximation \eqref{eq2} for $\rho_0T_0$ starts to fail when  $\varepsilon$ approaches $2/3$. 



\begin{table}
\centering
\renewcommand{\arraystretch}{1}
\setlength{\tabcolsep}{11pt}
 \begin{tabular}{l|cccccc}
\multicolumn{7}{c}{Setting (I)} \\ \midrule
$z_{i,j}$ & $(-\infty, 0)$ & $[0,\infty )$ &  &  & &  \\     
$x_{i,j}$ &  -1 &  1 &  &  &  &  \\ 
\midrule
\multicolumn{7}{c}{Setting (II)} \\ \midrule
 $z_{i,j}$  & $(-\infty, -1)$ & $[-1,0)$ &  $ [0,1)$ & $[1,\infty)$ & & \\ 
$x_{i,j}$ &  -2 &  -1 & 1 & 2 & &  \\  
\midrule

\multicolumn{7}{c}{Setting (III)} \\ \midrule
$z_{i,j}$ & $(-\infty, -1)$ & $[-1,-0.4)$ & $[-0.4,0)$ & $ [0,0.4)$ & $[0.4,1)$ & $[1,\infty)$ \\     
$x_{i,j}$ &  -3 &  -2  & -1 & 1 & 2 & 3 \\ 
\midrule
\end{tabular}
\caption{Three settings of correspondence between $x_{i,j}$ and $z_{i,j}$}\label{table:discretsetting}
\end{table}

\begin{figure}[!ht]
\captionsetup[subfigure]{labelformat=empty}
\centering
\begin{subfigure}[b]{0.4\textwidth}
\centering
\caption{\quad \quad  \small{Approximation \eqref{eq1} for $T_0$}}
\includegraphics[width=\textwidth]{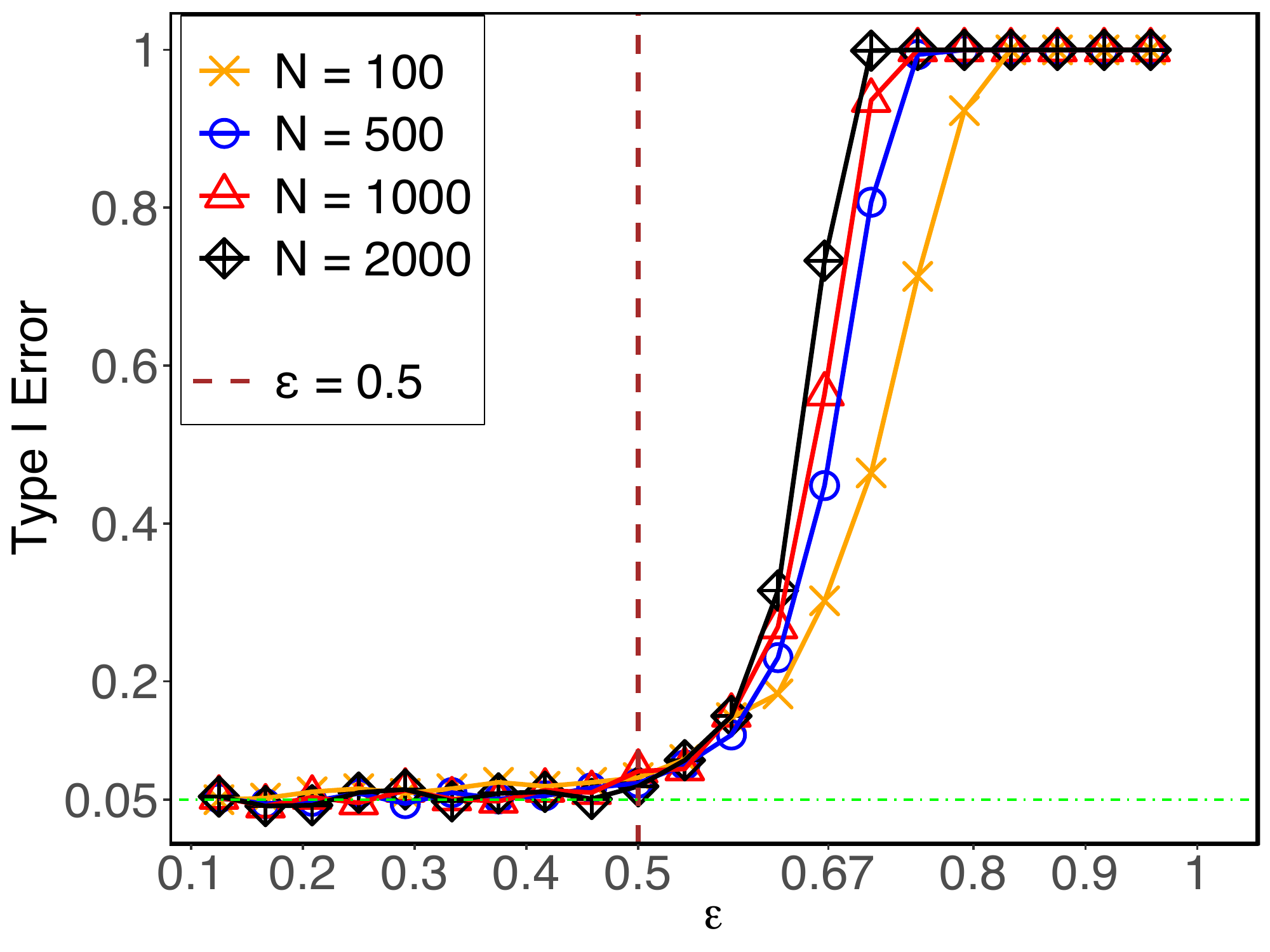}	
\end{subfigure}\quad \quad 
\begin{subfigure}[b]{0.4\textwidth}
\centering
\caption{\quad \quad \small{Approximation \eqref{eq2} for $\rho_0T_0$}}
\includegraphics[width=\textwidth]{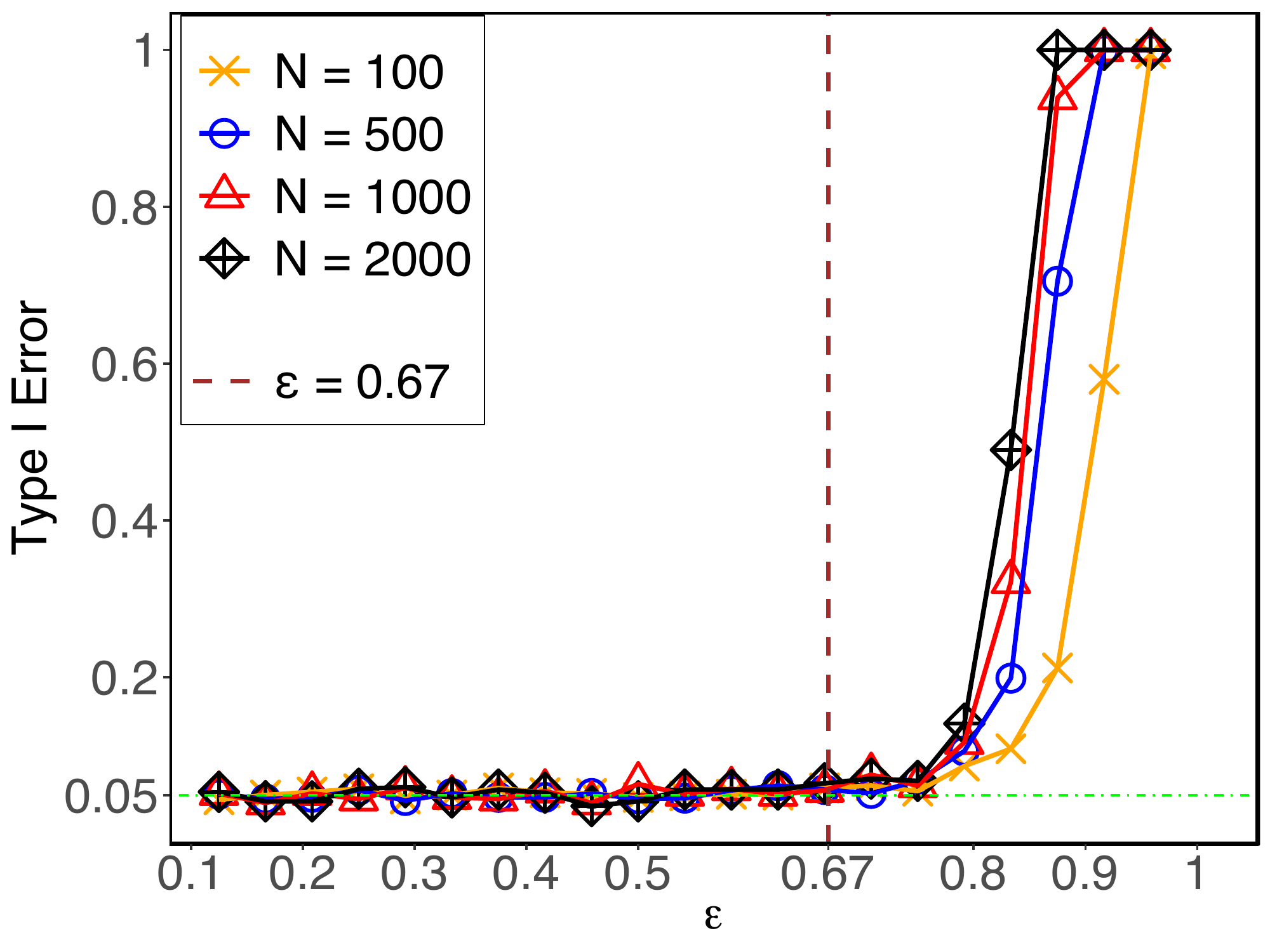}	
\end{subfigure}
\caption{Discrete data (I): Estimated type \RNum{1} error versus $\varepsilon$}\label{fig:typeierrormulti}	
\end{figure}

\begin{figure}[!ht]
\captionsetup[subfigure]{labelformat=empty}
\centering
\begin{subfigure}[b]{0.4\textwidth}
\centering
\caption{\quad \quad  \small{Approximation \eqref{eq1} for $T_0$}}
\includegraphics[width=\textwidth]{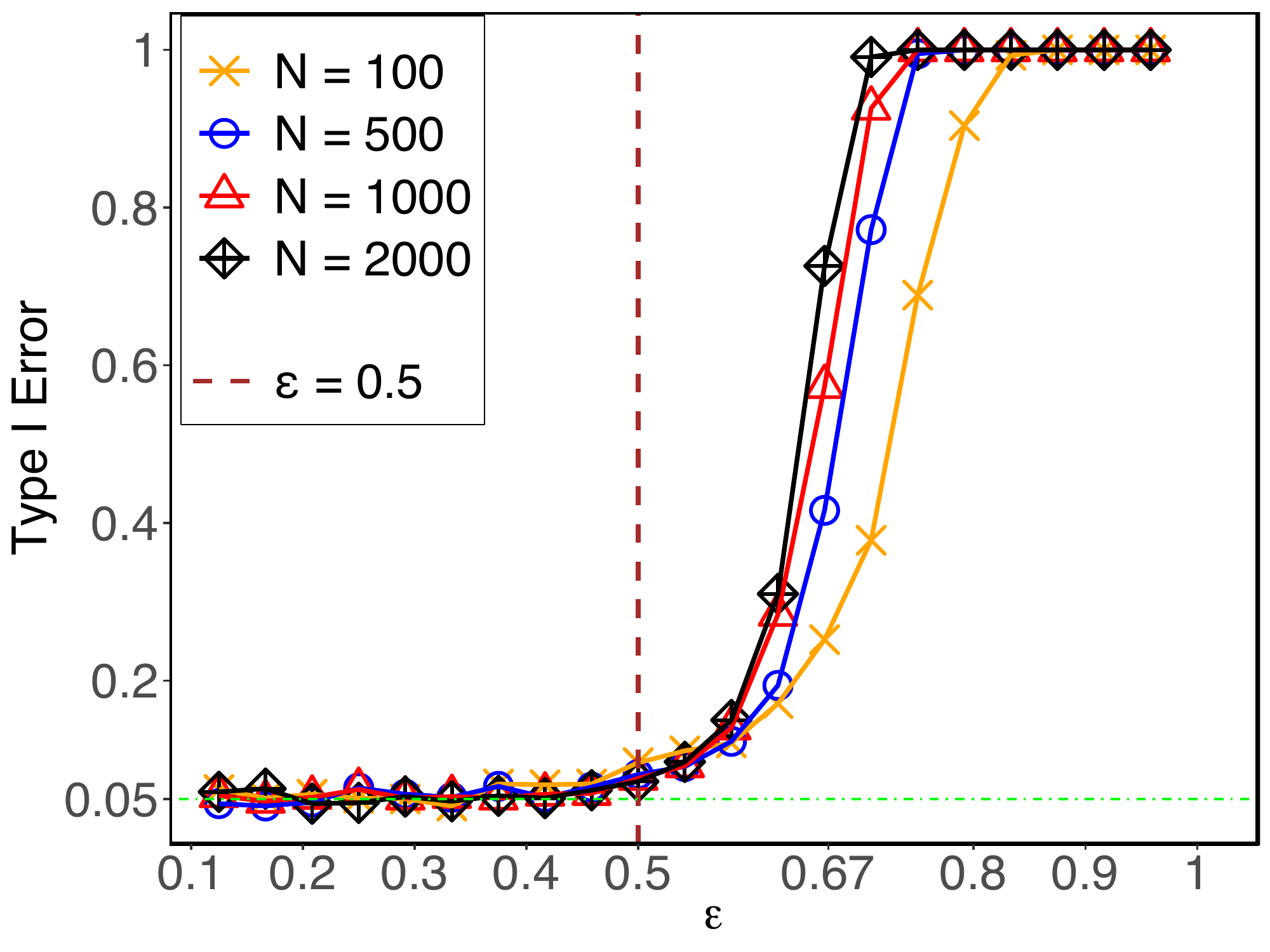}	
\end{subfigure}\quad \quad 
\begin{subfigure}[b]{0.4\textwidth}
\centering
\caption{\quad \quad \small{Approximation \eqref{eq2} for $\rho_0T_0$}}
\includegraphics[width=\textwidth]{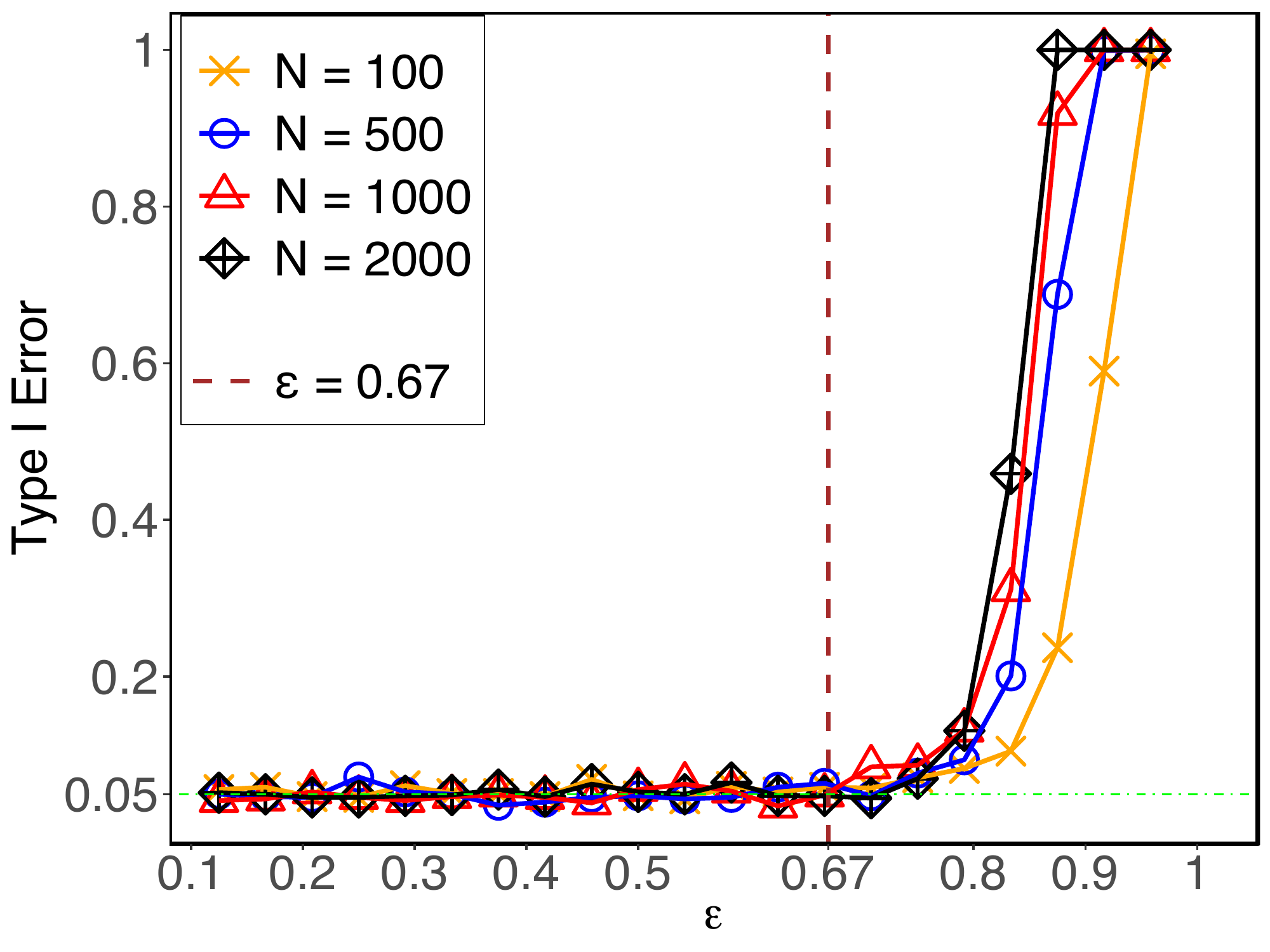}	
\end{subfigure}
\caption{Discrete data (II): Estimated type \RNum{1} error versus $\varepsilon$}\label{fig:typeierrormulti_case2}	
\end{figure}

\begin{figure}[!ht]
\captionsetup[subfigure]{labelformat=empty}
\centering
\begin{subfigure}[b]{0.4\textwidth}
\centering
\caption{\quad \quad  \small{Approximation \eqref{eq1} for $T_0$}}
\includegraphics[width=\textwidth]{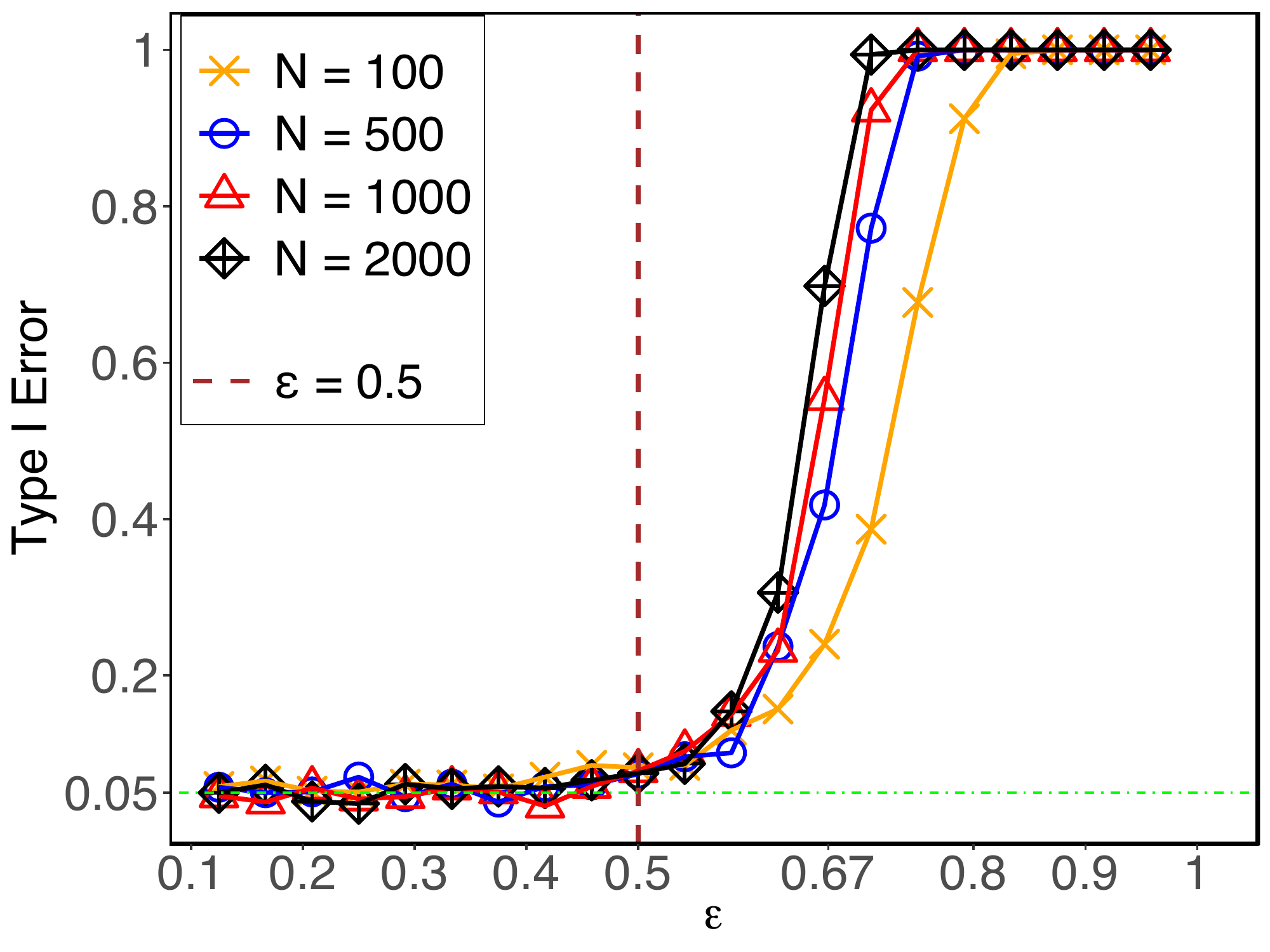}	
\end{subfigure}\quad \quad 
\begin{subfigure}[b]{0.4\textwidth}
\centering
\caption{\quad \quad \small{Approximation \eqref{eq2} for $\rho_0T_0$}}
\includegraphics[width=\textwidth]{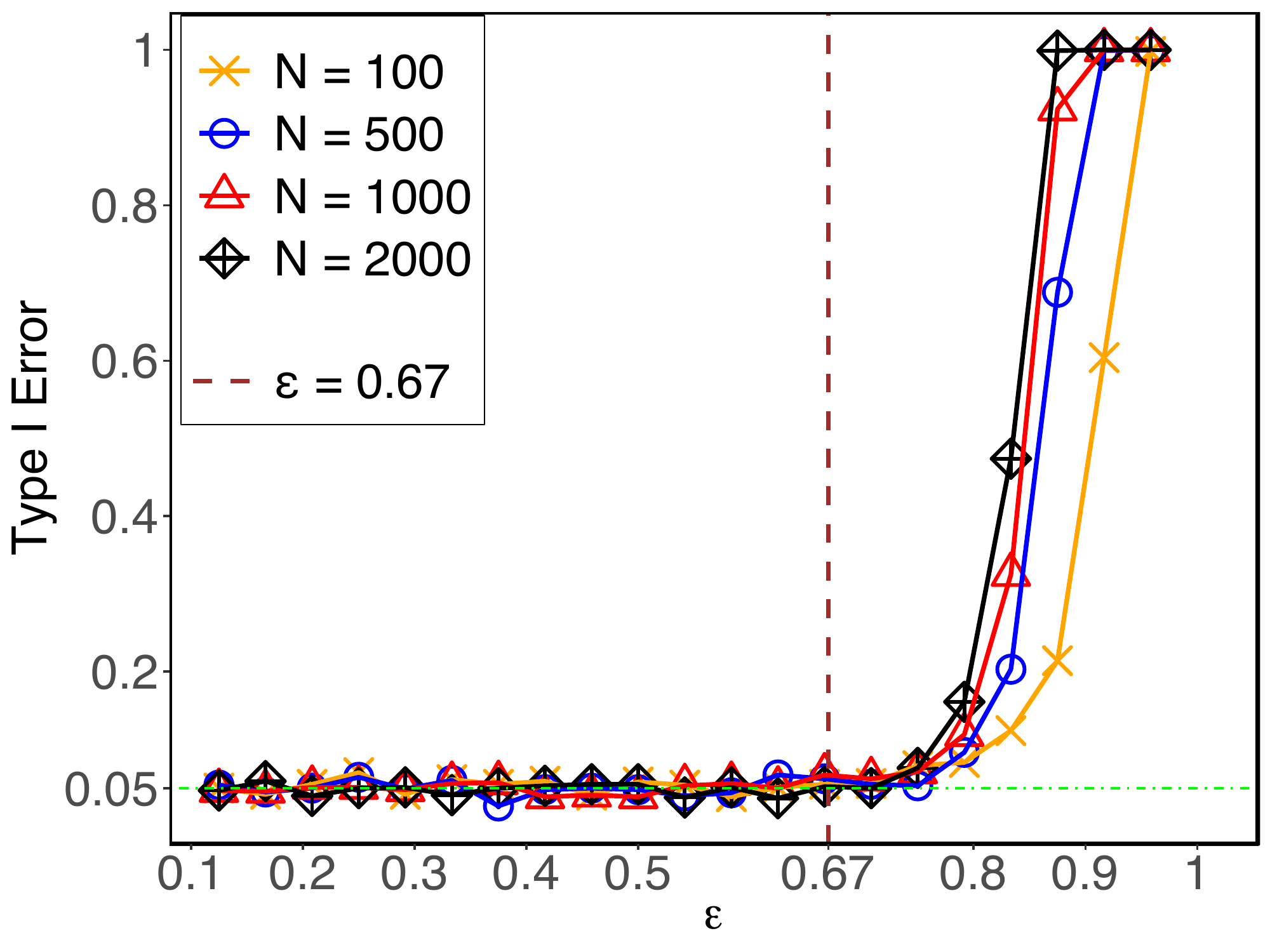}	
\end{subfigure}
\caption{Discrete data (III): Estimated type \RNum{1} error versus $\varepsilon$}\label{fig:typeierrormulti_case3}	
\end{figure}

\newpage

\subsection{Simulations on Estimating the Number of Factors}\label{sec:simfactor}
In this section, we demonstrate the performance of estimating the number of factors using the sequential procedure described in Section \ref{sec2.1}.  
In particular, 
we consider the simulation setting similar to that in Numerical Example \ref{numex:3}, where we take the true number of factors $k_0\in \{1,3\}$, sample size $N\in \{500, 1000\}$ and data dimension $p=\lfloor N^{\epsilon}\rfloor$ for different $\epsilon$ values. 
When conducting the likelihood ratio tests in the sequential procedure, the nominal significance level is set as $\alpha=0.05$. 
For each combination of $(k_0, N)$,
we use the sequential procedure to estimate the number of factors, denoted as $\hat{k}$.  
We repeat the procedure 1000 times and estimate the proportions of correct estimation $(\hat{k}=k_0)$ and overestimation $(\hat{k}>k_0)$, respectively.
We present the results for $k_0=1, 3$ in Figures \ref{fig:factorsel1} and \ref{fig:factorsel2}	, respectively,
where the results based on the likelihood ratio test without and with the Bartlett correction  are given in the left and right columns, respectively. 


The numerical results in Figures \ref{fig:factorsel1} and \ref{fig:factorsel2} show that 
(I) using the likelihood ratio test, 
the procedure begins to overestimate the number of factors when $\epsilon$ approaches $1/2$; 
(II) using the likelihood ratio test with the Bartlett correction, the procedure begins to overestimate the number of factors when $\epsilon$  approaches $2/3$. 
These  observations, compared with Figures \ref{fig:typeierror1}--\ref{fig:typeierror3}, suggest that the sequential procedure begins to overestimate the number of factors when the corresponding type \RNum{1} error begins to inflate, which is consistent with our discussions in Section \ref{sec2.2}. 
Moreover, in Figures \ref{fig:factorsel1} and \ref{fig:factorsel2}, when $\epsilon$ is small and does not  pass the corresponding  phase transition boundary, the proportion of overestimation $(\hat{k}>k_0)$ is around 0.05.  
This is because that rejecting $H_{0,k_0}$ suggests $\hat{k}>k_0$, and 
 the probability of rejecting $H_{0,k_0}$ (type \RNum{1} error of testing $H_{0,k_0}$) can be asymptotically  controlled at the level $\alpha=0.05$ under the asymptotic regimes derived in Theorems \ref{thm1} and \ref{thm2}.      




\begin{figure}[!ht]
\captionsetup[subfigure]{labelformat=empty}
\centering
\begin{subfigure}[b]{0.4\textwidth}
\centering
\caption{\quad  $N=500$; No Correction}
\includegraphics[width=\textwidth]{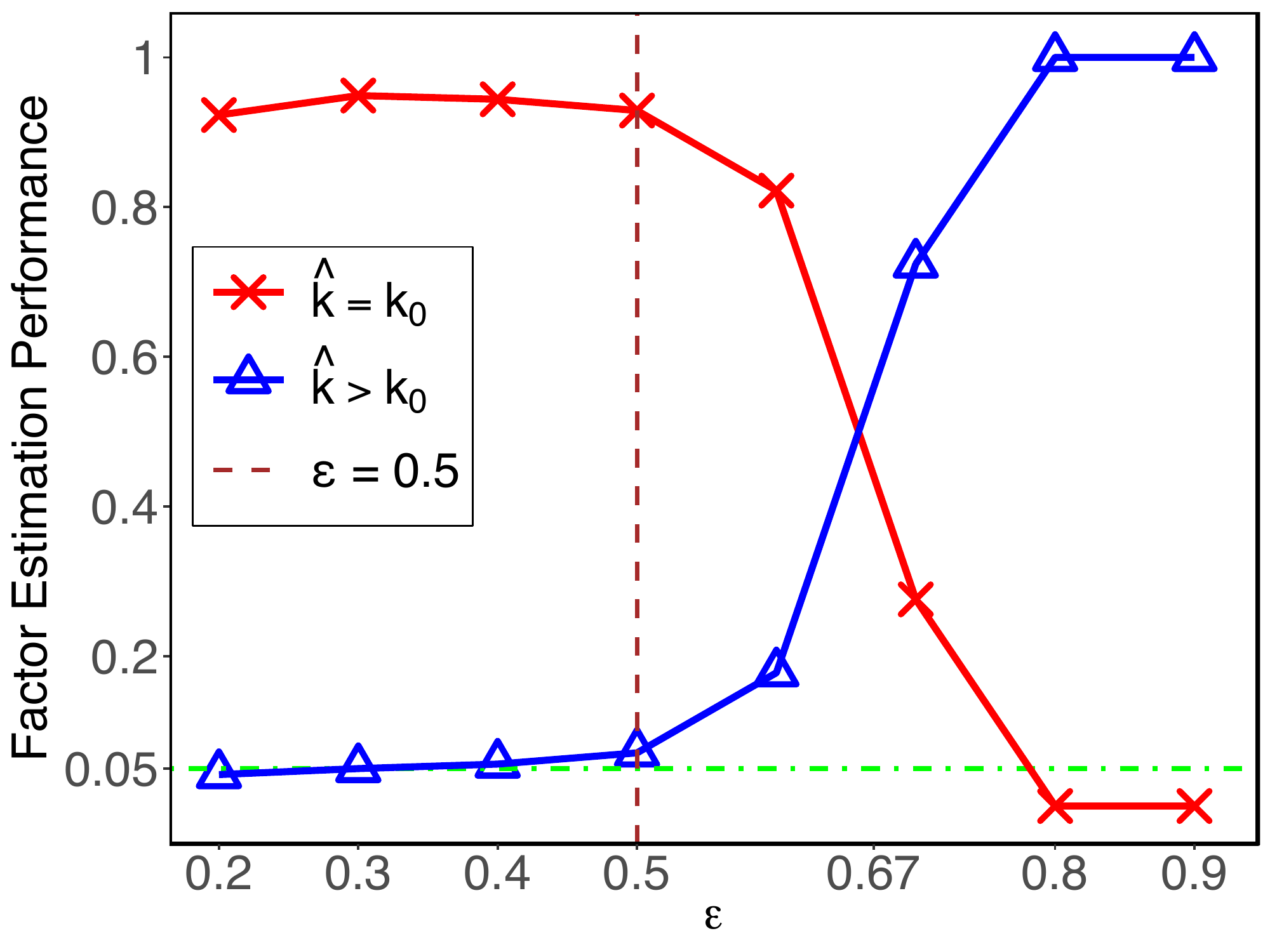}	
\end{subfigure}\quad \quad \
\begin{subfigure}[b]{0.4\textwidth}
\centering
\caption{\quad $N=500$; Bartlett Correction}
\includegraphics[width=\textwidth]{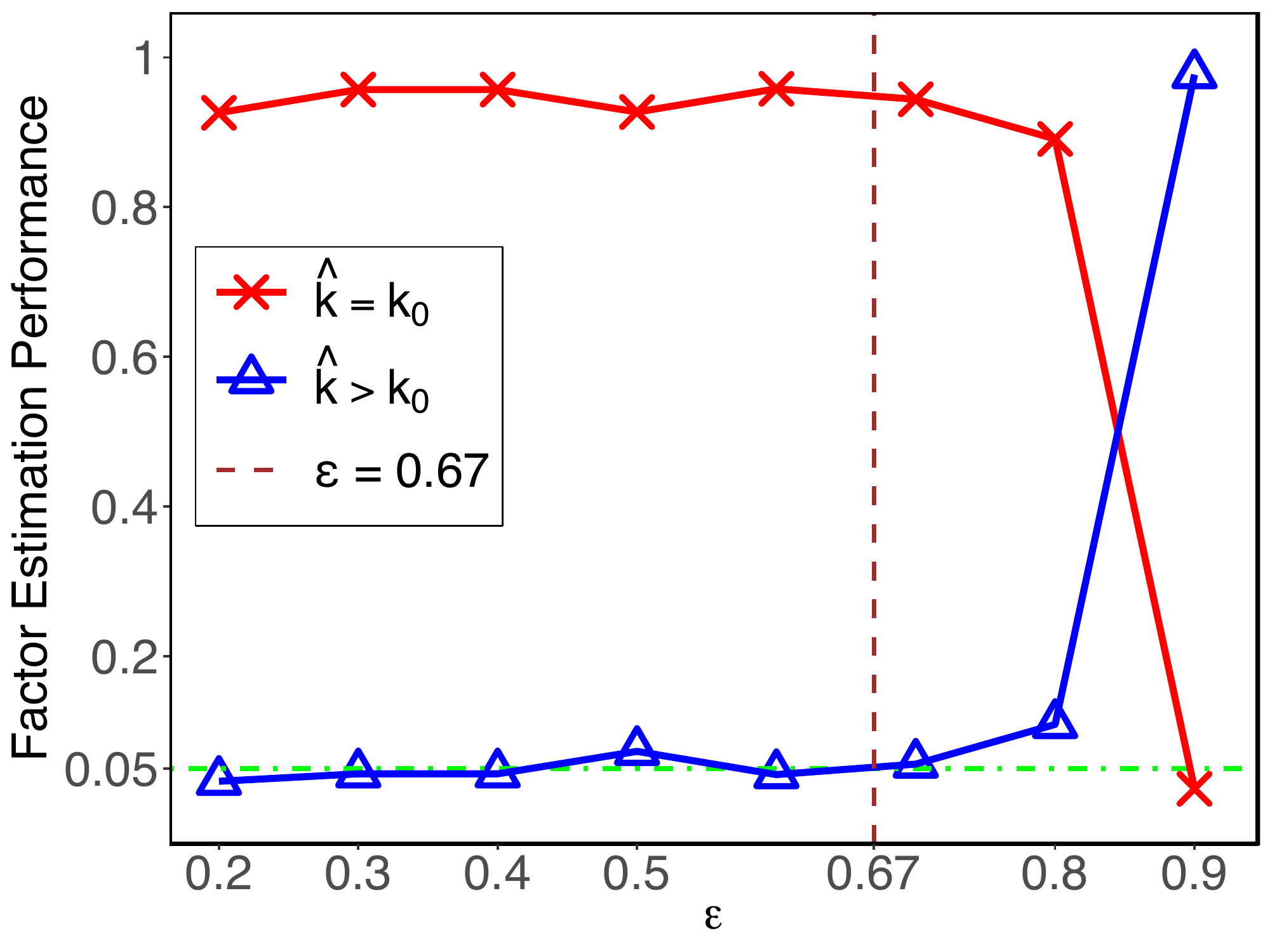}	
\end{subfigure}

\quad \\[-6pt]
-
\begin{subfigure}[b]{0.4\textwidth}
\centering
\caption{\quad $N=1000$; No Correction}
\includegraphics[width=\textwidth]{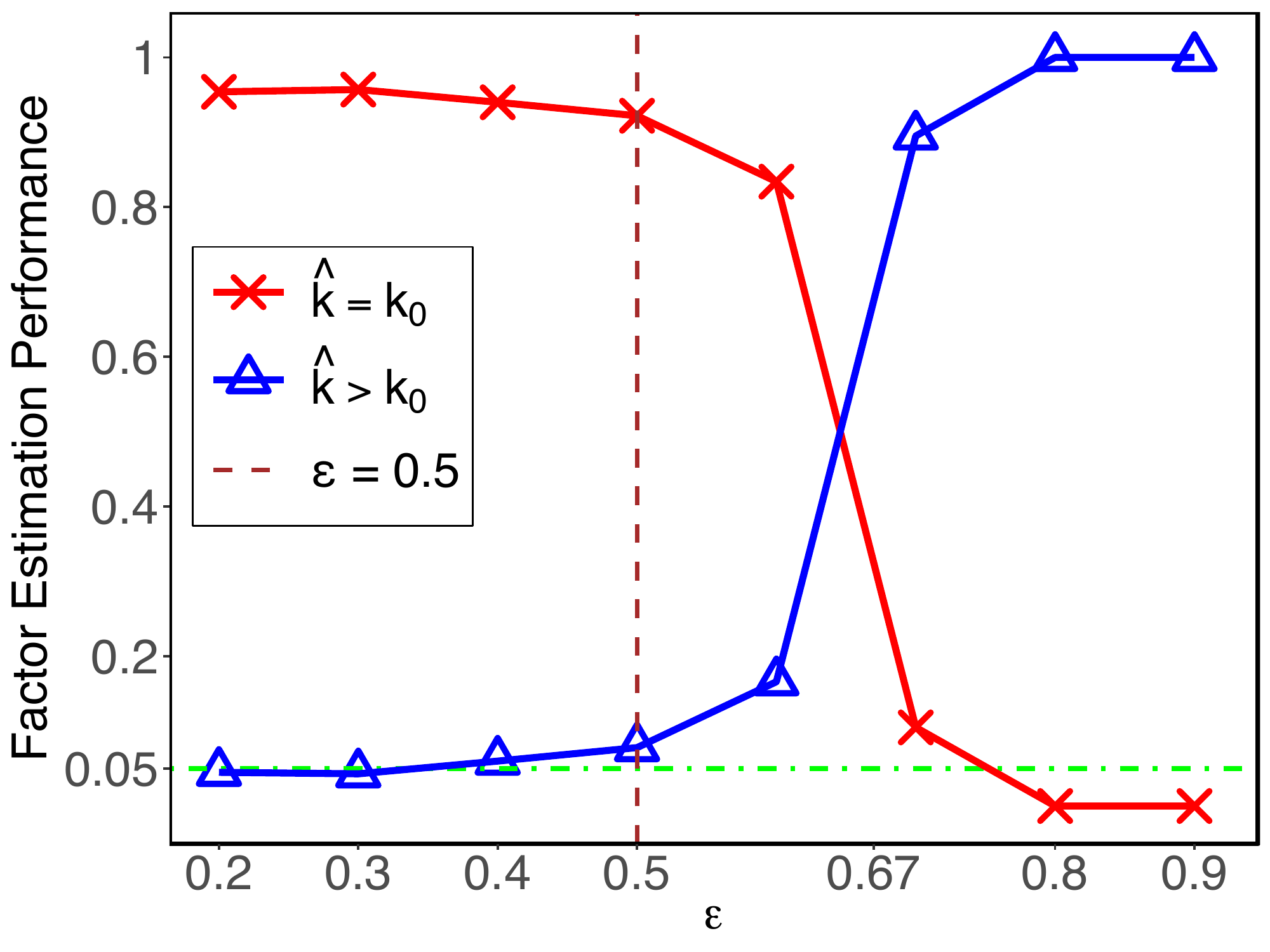}	
\end{subfigure}\quad \quad\ 
\begin{subfigure}[b]{0.4\textwidth}
\centering
\caption{\quad  $N=1000$; Bartlett Correction}
\includegraphics[width=\textwidth]{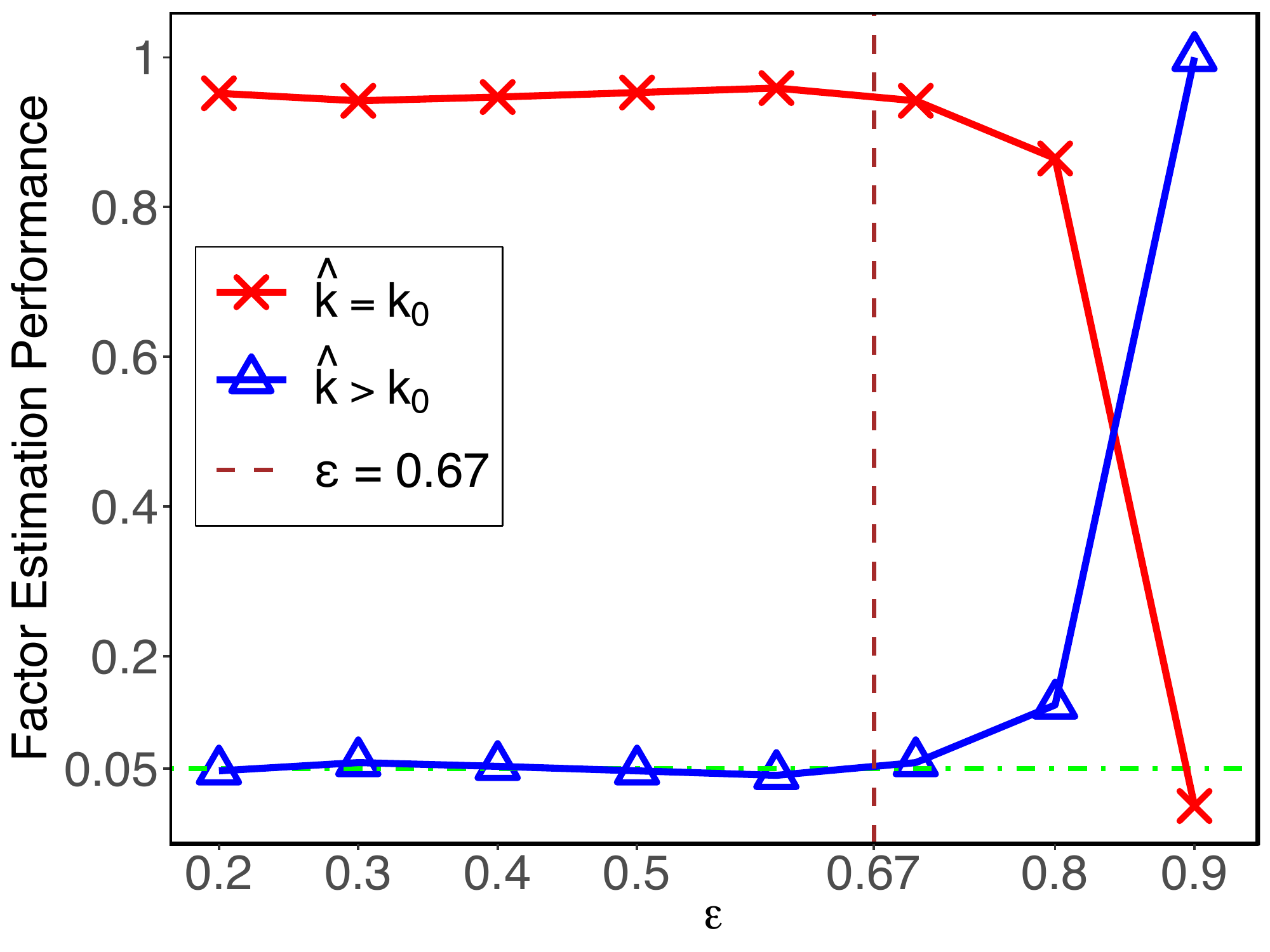}	
\end{subfigure}
\caption{Estimating the number of factors when $k_0=1$}\label{fig:factorsel1}	
\end{figure}

\begin{figure}
\captionsetup[subfigure]{labelformat=empty}
\centering
\begin{subfigure}[b]{0.4\textwidth}
\centering
\caption{\quad  $N=500$; No Correction}
\includegraphics[width=\textwidth]{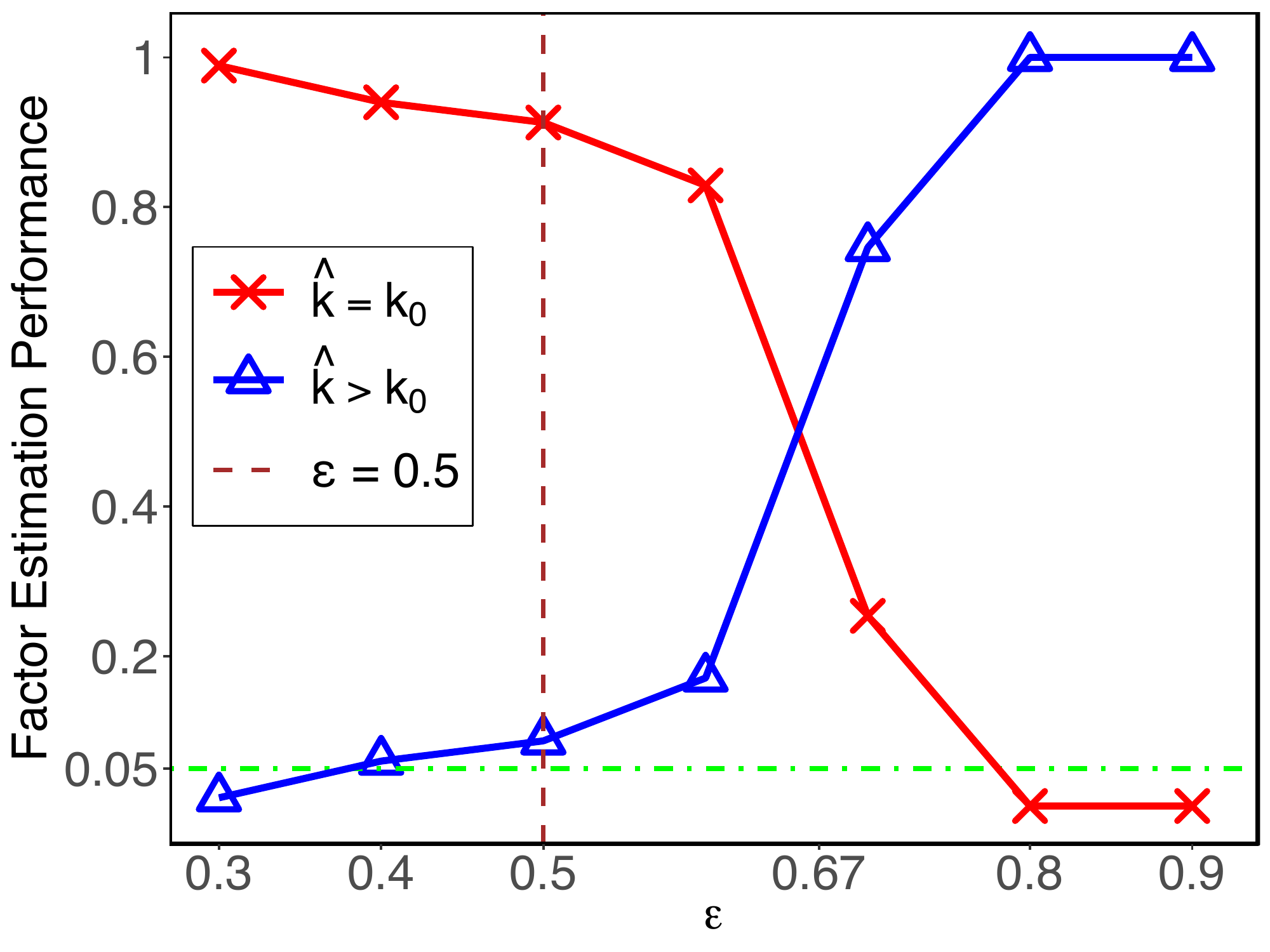}	
\end{subfigure}\quad \quad \
\begin{subfigure}[b]{0.4\textwidth}
\centering
\caption{\quad $N=500$; Bartlett Correction}
\includegraphics[width=\textwidth]{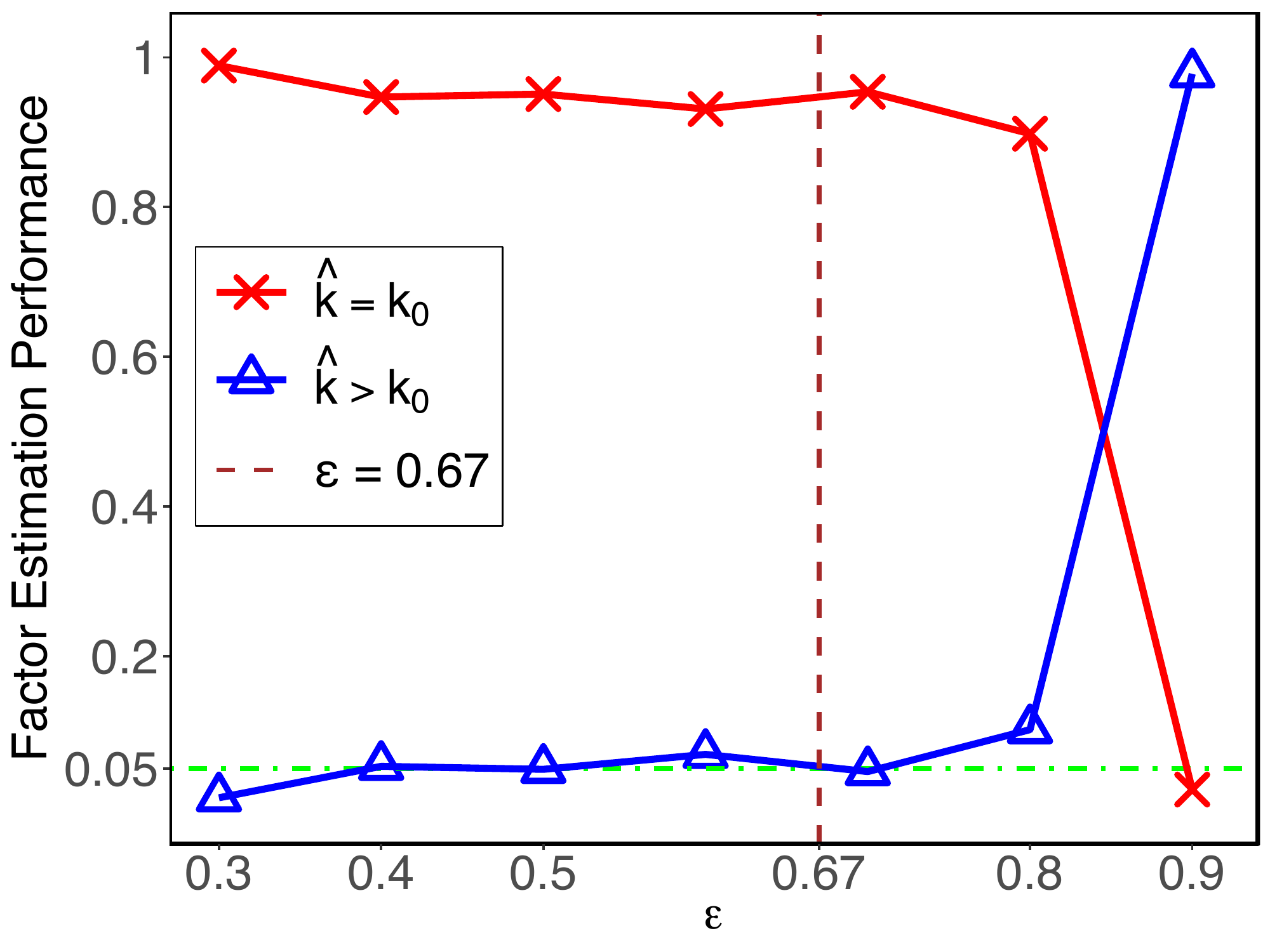}	
\end{subfigure}

\quad \\[-6pt]

\begin{subfigure}[b]{0.4\textwidth}
\centering
\caption{\quad $N=1000$; No Correction}
\includegraphics[width=\textwidth]{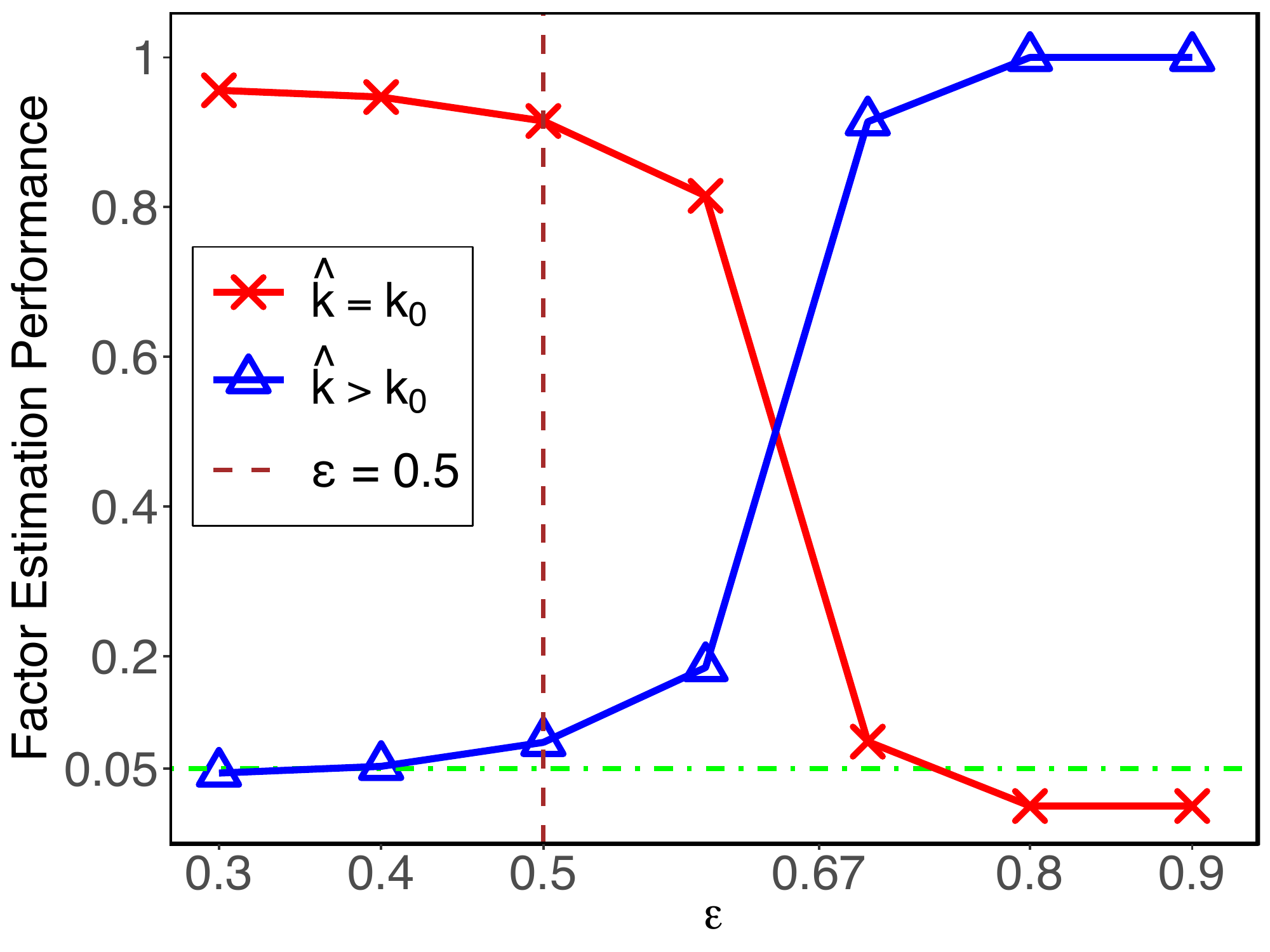}	
\end{subfigure}\quad \quad \
\begin{subfigure}[b]{0.4\textwidth}
\centering
\caption{\quad $N=1000$; Bartlett Correction}
\includegraphics[width=\textwidth]{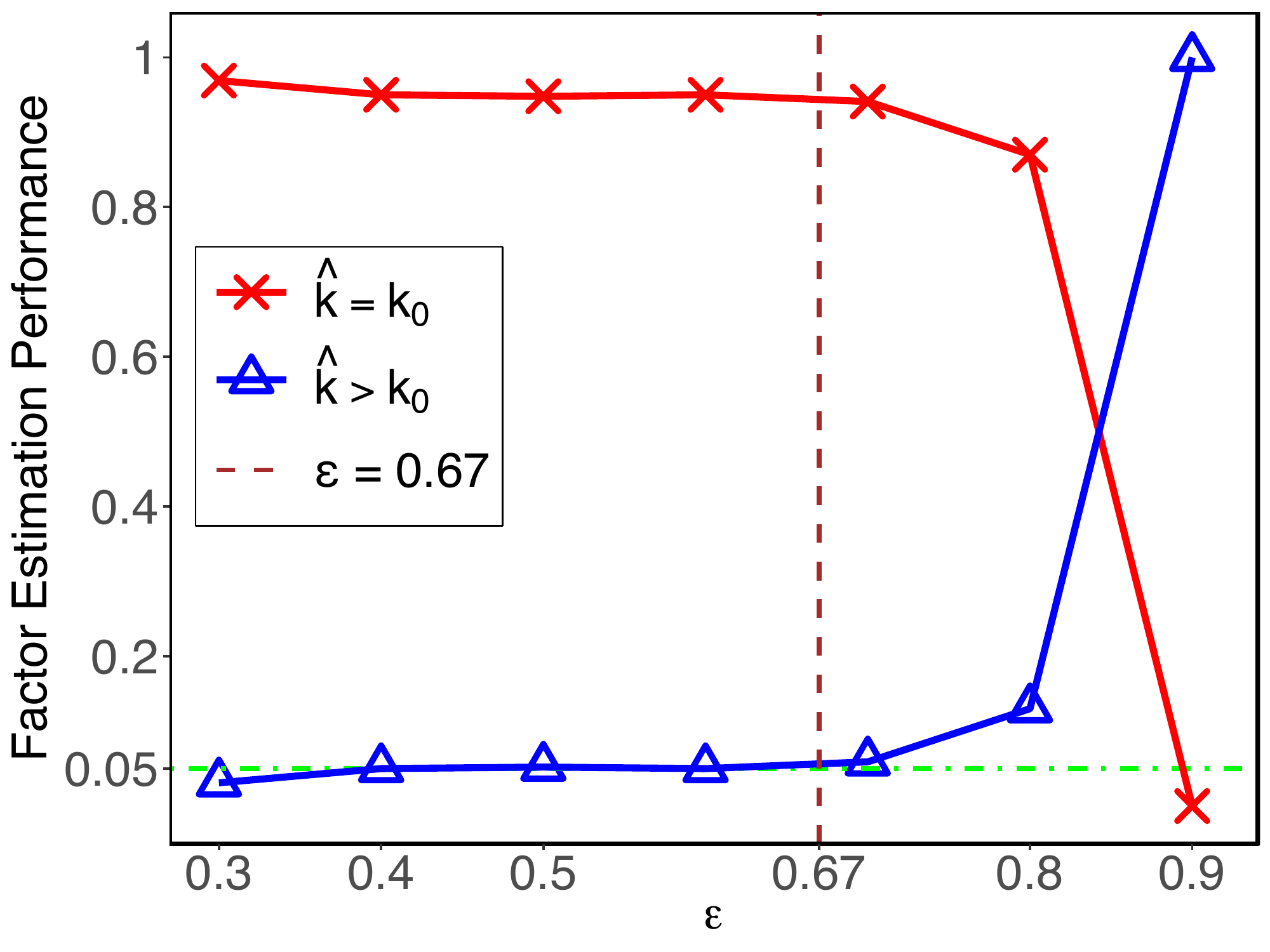}	
\end{subfigure}
\caption{Estimating the number of factors when $k_0=3$}\label{fig:factorsel2}	
\end{figure}

\end{document}